\title[Minimal model program]{Some remarks on the minimal model program for 
log canonical pairs}
\author{Osamu Fujino} 
\date{2014/6/25, version 0.41}
\subjclass[2010]{Primary 14E30; Secondary 14N30, 32J27}
\keywords{extremal contractions, Fano contractions, lc-trivial fibrations, 
canonical bundle formulas, 
log canonical singularities, compact K\"ahler manifolds, non-K\"ahler 
manifolds}
\address{Department of Mathematics, Graduate School of Science, 
Kyoto University, Kyoto 606-8502, Japan}
\email{fujino@math.kyoto-u.ac.jp}
\newcommand{\Pic}[0]{{\operatorname{Pic}}}
\newcommand{\mult}[0]{{\operatorname{mult}}}
\newcommand{\rank}[0]{{\operatorname{rank}}}
\newcommand{\Proj}[0]{{\operatorname{Proj}}}
\newcommand{\Exc}[0]{{\operatorname{Exc}}}
\newcommand{\Supp}[0]{{\operatorname{Supp}}}
\newcommand{\Bs}[0]{{\operatorname{Bs}}}
\newtheorem{thm}{Theorem}[section]
\newtheorem{lem}[thm]{Lemma}
\newtheorem{cor}[thm]{Corollary}
\newtheorem{conj}[thm]{Conjecture}
\newtheorem{cla}{Claim}
\theoremstyle{definition}
\newtheorem{ex}[thm]{Example}
\newtheorem{defn}[thm]{Definition}
\newtheorem{rem}[thm]{Remark}
\newtheorem*{ack}{Acknowledgments} 
\newtheorem{say}[thm]{}
\newtheorem{case}{Case}
\begin{document}

\maketitle 

\begin{abstract}
We prove that the target space of an extremal Fano contraction from 
a log canonical pair has only log canonical singularities. 
We also treat some related topics, for example, the finite generation 
of canonical rings for compact K\"ahler manifolds, and so on. 
The main ingredient of this paper is the nefness of 
the moduli parts of lc-trivial fibrations. 
We also give some observations on the semi-ampleness of the moduli 
parts of lc-trivial fibrations. 
For the reader's convenience, 
we discuss some examples of non-K\"ahler manifolds, flopping 
contractions, and so on, in order to clarify our results. 
\end{abstract}

\tableofcontents 

\section{Introduction}\label{sec1}

Let $\pi:(X, \Delta)\to S$ be a projective morphism from a log canonical 
pair $(X, \Delta)$ to a variety $S$. 
Then the cone theorem 
$$
\overline {NE}(X/S)=\overline {NE}(X/S)_{K_X+\Delta\geq 0}+\sum _{i}
\mathbb R_{\geq 0}[C_i]
$$ 
holds for $\pi:(X, \Delta)\to S$. We take a $(K_X+\Delta)$-negative extremal 
ray $R=\mathbb R_{\geq 0}[C_i]$. Then 
there is a contraction morphism 
$$
\varphi_R: (X, \Delta)\to Y
$$ 
over $S$ associated to $R$. 
For the details of the cone and contraction theorem for log canonical 
pairs, see \cite{ambro1}, 
\cite{fujino3}, \cite{fujino4}, \cite{fujino5}, and \cite[Theorem 1.1]{fujino6} 
(see also \cite{fujino-foundation}).

From now on, let us consider a contraction morphism 
$$
f:(X, \Delta)\to Y
$$ 
such that 
\begin{itemize}
\item[(i)] $(X, \Delta)$ is a $\mathbb Q$-factorial log canonical 
pair, 
\item[(ii)] $-(K_X+\Delta)$ is $f$-ample, and 
\item[(iii)] $\rho(X/Y)=1$. 
\end{itemize}
Then we have the following three cases. 

\begin{case}[Divisorial contraction] 
$f$ is divisorial, that is, $f$ is a birational contraction which contracts 
a divisor. 

In this case, the exceptional locus $\Exc(f)$ of $f$ is a prime divisor 
on $X$ and $(Y, \Delta_Y)$ is a $\mathbb Q$-factorial log canonical pair with 
$\Delta_Y=f_*\Delta$. 
\end{case}

\begin{case}[Flipping contraction]\label{case2} 
$f$ is flipping, that is, $f$ is a birational contraction which is small. 

In this case, we can take the flipping diagram: 
$$
\xymatrix{
X\ar@{-->}[rr]^{\varphi} \ar[rd]_{f} && X^+\ar[ld]^{f^+} \\
  & Y & 
} 
$$
where $f^+$ is a small projective birational morphism and  
\begin{itemize}
\item[(i$'$)] $(X^+, \Delta^+)$ is a $\mathbb Q$-factorial 
log canonical pair with $\Delta^+=\varphi_*\Delta$, 
\item[(ii$'$)] $K_{X^+}+\Delta^+$ is $f^+$-ample, and 
\item[(iii$'$)] $\rho(X^+/Y)=1$. 
\end{itemize}
For the existence of log canonical flips, see \cite[Corollary 1.2]{birkar} and 
\cite[Corollary 1.8]{hacon-xu}. 
\end{case}

\begin{case}[Fano contraction]\label{case3}  
$f$ is a Fano contraction, that is, $\dim Y<\dim X$. 

Then $Y$ is $\mathbb Q$-factorial and has only log canonical singularities. 
Moreover, if every log canonical center of $(X, \Delta)$ is dominant onto $Y$, 
then $Y$ has only log terminal singularities. 

In Case \ref{case3}, $f:(X, \Delta)\to Y$ is usually called 
a {\em{Mori fiber space}}. 
\end{case} 

The log canonicity of $Y$ in Case \ref{case3} is missing 
in the literature. So we prove it in this paper. 
It is an easy consequence of the following theorem. 
For the other statements on singularities 
in the above three cases, see, for example, 
\cite[Propositions 3.36, 3.37, Corollaries 3.42, and 3.43]{kollar-mori} 
(see also \cite{fujino-foundation}). 

\begin{thm}[{cf.~\cite[Theorem 1.2]{fujino1}}]\label{thm11}
Let $(X, \Delta)$ be a sub log canonical pair such that 
$X$ is smooth and $\Supp \Delta$ is a simple normal crossing divisor on $X$. 
Let $f:(X, \Delta)\to Y$ be 
a proper surjective morphism 
such that 
$$f_*\mathcal O_X(\lceil -\Delta^{<1}\rceil)\simeq \mathcal O_Y$$ and that 
$$K_X+\Delta
\sim _{\mathbb Q, f}0. $$ 
Assume that $K_Y$ is $\mathbb Q$-Cartier. Then 
$Y$ has only log canonical singularities. 
We further assume that every log canonical center of $(X, \Delta)$ is dominant 
onto $Y$. Then $Y$ has only log terminal singularities. 
\end{thm}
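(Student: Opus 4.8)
The plan is to reduce the statement to the canonical bundle formula for the lc-trivial fibration $f$ and then to read the singularities of $Y$ directly off the resulting discriminant and moduli parts. First I would invoke the theory of lc-trivial fibrations to produce a discriminant $\mathbb Q$-divisor $B_Y$ and a moduli part $M_Y$ on $Y$ with
$$K_X+\Delta\sim_{\mathbb Q}f^*(K_Y+B_Y+M_Y);$$
here $B_Y$ is defined divisor by divisor through the log canonical thresholds of $(X,\Delta)$ along the fibres, while $M_Y$ is the moduli part, which by the nefness result quoted in the introduction descends to a genuine nef $\mathbb Q$-divisor $M_{\overline Y}$ on a suitable birational model $\mu\colon\overline Y\to Y$. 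Since $K_X+\Delta\sim_{\mathbb Q,f}0$ we may write $K_X+\Delta\sim_{\mathbb Q}f^*A$ for some $\mathbb Q$-Cartier $A$ on $Y$; comparing this with the formula and pushing forward from $\overline Y$ shows that $L:=K_Y+B_Y+M_Y\sim_{\mathbb Q}A$ is $\mathbb Q$-Cartier. Combined with the hypothesis that $K_Y$ is $\mathbb Q$-Cartier, this makes $N_Y:=B_Y+M_Y=L-K_Y$ $\mathbb Q$-Cartier, even though neither $B_Y$ nor $M_Y$ need be $\mathbb Q$-Cartier individually.

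Next I would record two properties of the discriminant. Because $(X,\Delta)$ is sub log canonical, the log canonical threshold attached to every prime divisor is $\ge 0$, so all coefficients of $B_Y$, and of its transform $B_{\overline Y}$ on any higher model, are $\le 1$; in the language of the formula $(Y,B_Y)$ is sub log canonical. The hypothesis $f_*\mathcal O_X(\lceil -\Delta^{<1}\rceil)\simeq\mathcal O_Y$ is exactly what forces $B_Y\ge 0$: a prime divisor $P\subset Y$ with negative coefficient in $B_Y$ would have log canonical threshold $>1$, equivalently $\lceil -\Delta^{<1}\rceil$ would carry a vertical component over $P$ producing a section with a pole along $P$, so that $f_*\mathcal O_X(\lceil -\Delta^{<1}\rceil)$ would be strictly larger than $\mathcal O_Y$ near $P$. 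Hence $0\le B_Y$.

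The final and most delicate step passes from the pair $(Y,B_Y)$ to $Y$ itself, and this is where the nefness of the moduli part is essential. On $\overline Y$ the formula reads $K_{\overline Y}+B_{\overline Y}+M_{\overline Y}=\mu^*L$, and writing $K_{\overline Y}=\mu^*K_Y+\sum_E a(E;Y)E$ gives, for every $\mu$-exceptional prime divisor $E$,
$$a(E;Y)=\mult_E(\mu^*N_Y)-\mult_E B_{\overline Y}-\mult_E M_{\overline Y}.$$
I would then set $G:=\mu^*N_Y-M_{\overline Y}$. Since $\mu^*N_Y$ is numerically trivial over $Y$ and $M_{\overline Y}$ is nef, $-G$ is $\mu$-nef, while $\mu_*G=N_Y-M_Y=B_Y\ge 0$; the negativity lemma therefore forces $G\ge 0$. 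Consequently $\mult_E(\mu^*N_Y)\ge\mult_E M_{\overline Y}$ and
$$a(E;Y)=\mult_E G-\mult_E B_{\overline Y}\ge 0-1=-1,$$
so all discrepancies of $Y$ are $\ge -1$ and $Y$ is log canonical. I expect this to be the crux: one must know that the moduli $\mathbf{b}$-divisor genuinely descends as a nef divisor, so that $M_{\overline Y}$ can be fed into the negativity lemma, and one must keep track throughout of the fact that only the combinations $L$ and $N_Y$, not $B_Y$ or $M_Y$ separately, are $\mathbb Q$-Cartier on $Y$.

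For the log terminal assertion I would rerun the same computation under the extra hypothesis that every log canonical center of $(X,\Delta)$ is dominant onto $Y$. This guarantees that neither $(X,\Delta)$ nor its base change over $\overline Y$ has a log canonical center lying over a divisor, so every log canonical threshold entering the discriminant is strictly positive and $\mult_E B_{\overline Y}<1$ for all $E$. The inequality above then sharpens to $a(E;Y)=\mult_E G-\mult_E B_{\overline Y}>-1$ for every exceptional $E$, so $Y$ has only log terminal singularities.
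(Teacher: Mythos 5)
Your proposal is correct, and it rests on exactly the two pillars the paper uses: effectivity of the discriminant with coefficients at most $1$ (forced by $f_*\mathcal O_X(\lceil -\Delta^{<1}\rceil)\simeq \mathcal O_Y$ and sub log canonicity, as you argue), and the b-nefness of the moduli part (Theorem \ref{thm25}, applied with $S=Y$ so that $M$ is nef over $Y$). But your endgame is genuinely different from the paper's. The paper first shrinks to $Y$ affine, then for a fixed divisor $E$ and $\varepsilon>0$ chooses an effective $\sigma$-exceptional $F$ with $-F$ $\sigma$-ample, replaces the $\sigma$-ample $\mathbb Q$-divisor $M_{Y'}-\varepsilon F$ by a general effective member $G$, and pushes forward to obtain an auxiliary boundary $\Theta_{E,\varepsilon}=\sigma_*(B_{Y'}+\varepsilon F+G)$ with $K_Y+\Theta_{E,\varepsilon}$ $\mathbb Q$-Cartier; monotonicity of discrepancies plus letting $\varepsilon\to 0$ then gives $a(E,Y,0)\geq -1$. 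You instead feed the difference $G=\mu^*(B_Y+M_Y)-M_{\overline Y}$ into the negativity lemma, using that the hypothesis ``$K_Y$ is $\mathbb Q$-Cartier'' makes $B_Y+M_Y$ $\mathbb Q$-Cartier, that $-G$ is $\mu$-nef, and that $\mu_*G=B_Y\geq 0$; this yields $a(E;Y)=\mult_E G-\mult_E B_{\overline Y}\geq -1$ in one stroke, with no affine reduction, no perturbation by $\varepsilon F$, and no limiting argument. What the paper's route buys in exchange is the explicit family of sub-boundaries $\Theta_{E,\varepsilon}$ on $Y$ itself, which is what feeds Remark \ref{rem-new}: if the moduli part were semi-ample one could take $\varepsilon=0$ and get an honest log canonical boundary $\Delta_Y$. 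One point you should make explicit: a single model $\overline Y$ does not test all divisors over $Y$, so you must observe that once $\mathbf M$ descends, its trace on any higher model is a pullback and remains nef over $Y$, and hence for each prime divisor $E$ over $Y$ you may rechoose $\overline Y$ so that $E$ appears on it (the paper does precisely this when choosing $\sigma:Y'\to Y$ adapted to $E$). With that standard b-divisor remark inserted, your argument is complete, and the log terminal refinement via $\mult_E B_{\overline Y}<1$ matches the paper's.
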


Our proof of Theorem \ref{thm11} depends on the nefness of 
the moduli parts of lc-trivial fibrations (cf.~\cite[Section 5, Part II]{mori}, 
\cite{kawamata2}, \cite{ambro2}, \cite{fujino-un}, 
\cite{kollar2}, \cite[Section 3]{fujino-gongyo3}, and 
so on). 
In this paper, we use Ambro's formulation in \cite{ambro2} and 
its generalization in \cite[Section 3]{fujino-gongyo3} based on the semipositivity 
theorem in \cite{fujino2}. For the details of the 
Hodge theoretic aspects of the semipositivity theorem, 
see also \cite{fujino-fujisawa} and \cite{ffs}. 
It is conjectured that the moduli parts of lc-trivial fibrations 
are semi-ample (see Conjecture \ref{conj29}). We give some observations on the semi-ampleness 
of the moduli parts of lc-trivial fibrations in Section \ref{sec-new}. 

By the proof of \cite[Theorem 1.2]{fujino1} and 
\cite[Section 3]{fujino-gongyo3} (see Theorem \ref{thm25}), we have: 

\begin{thm}[{cf.~\cite[Theorem 1.2]{fujino1} and 
\cite[Theorem 4.2.1]{fujino-un}}]\label{thm-add} 
Let $(X, \Delta)$ be a sub log canonical pair 
such that $X$ is smooth and $\Supp \Delta$ is a simple normal crossing divisor 
on $X$. 
Let $f:(X, \Delta)\to Y$ be a proper surjective morphism such that 
$$
f_*\mathcal O_X(\lceil -\Delta^{<1}\rceil)\simeq \mathcal O_Y
$$ 
and that 
$$
K_X+\Delta\sim _{\mathbb Q}f^*D
$$ 
for some $\mathbb Q$-Cartier $\mathbb Q$-divisor 
$D$ on $Y$. 
Assume that $\pi:Y\to S$ is a projective morphism onto a quasi-projective 
variety $S$. 
Let $A$ be a $\pi$-ample Cartier divisor on $Y$ and let $\varepsilon$ be an arbitrary 
positive rational number. 
We further assume that every log canonical 
center of $(X, \Delta)$ is dominant onto $Y$. 
Then there is an effective $\mathbb Q$-divisor 
$\Delta_Y$ on $Y$ such that 
$$
K_Y+\Delta_Y\sim _{\mathbb Q, \pi}D+\varepsilon A 
$$ 
and that $(Y, \Delta_Y)$ is kawamata log terminal. 
\end{thm}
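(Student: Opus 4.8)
The plan is to prove Theorem~\ref{thm-add} by constructing the discriminant and moduli parts of the lc-trivial fibration $f:(X,\Delta)\to Y$ via the canonical bundle formula, and then absorbing the moduli part into the $\varepsilon A$ term. First I would apply the canonical bundle formula (Ambro's formulation in \cite{ambro2}, in the generalized form of \cite[Section 3]{fujino-gongyo3}, invoked as Theorem~\ref{thm25}) to write
$$
K_X+\Delta\sim_{\mathbb Q}f^*(K_Y+B_Y+M_Y),
$$
where $B_Y$ is the discriminant $\mathbb Q$-divisor and $M_Y$ is the moduli part. Since $K_X+\Delta\sim_{\mathbb Q}f^*D$ by hypothesis, comparing the two expressions gives $D\sim_{\mathbb Q}K_Y+B_Y+M_Y$. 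The discriminant $B_Y$ is an effective $\mathbb Q$-divisor precisely because the fibers are sub log canonical and, under the running assumption that every log canonical center of $(X,\Delta)$ is dominant onto $Y$, the coefficients of $B_Y$ are strictly less than $1$; this is exactly where dominance of the lc centers upgrades the conclusion from log canonical to kawamata log terminal, so I would isolate this coefficient bound as the structural heart of the argument.

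Next I would use the nefness of the moduli part. After passing to a suitable birational model of $Y$ and taking a sufficiently high resolution, $M_Y$ is $\pi$-nef (this is the main ingredient highlighted in the excerpt, coming from the semipositivity theorem of \cite{fujino2}). The key maneuver is that for the given $\pi$-ample Cartier divisor $A$ and arbitrary rational $\varepsilon>0$, the sum $M_Y+\varepsilon A$ is $\pi$-ample, and hence $\mathbb Q$-linearly equivalent over $S$ to an effective $\mathbb Q$-divisor $M'$ whose support we may choose in general position—in particular avoiding the generic points of the log canonical centers of $(Y,B_Y)$ and containing no component that would violate the klt condition. Setting
$$
\Delta_Y=B_Y+M',
$$
we obtain $K_Y+\Delta_Y\sim_{\mathbb Q,\pi}K_Y+B_Y+M_Y+\varepsilon A\sim_{\mathbb Q,\pi}D+\varepsilon A$, which is the desired $\mathbb Q$-linear equivalence.

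It then remains to check that $(Y,\Delta_Y)$ is kawamata log terminal. The divisor $\Delta_Y$ is effective since $B_Y$ is effective and $M'$ is effective. For the klt property, I would combine the coefficient bound $<1$ on $B_Y$ (from dominance of all lc centers) with the fact that a general member $M'$ in the $\pi$-ample linear system $|M_Y+\varepsilon A|_{\mathbb Q}$ can be taken to be reduced-free of high-multiplicity components and chosen to avoid the non-klt locus of $(Y,B_Y)$; by a standard Bertini-type argument the pair $(Y,B_Y+M')$ remains klt. The main obstacle I anticipate is twofold: first, establishing the strict inequality on the coefficients of the discriminant $B_Y$ under the dominance hypothesis—this requires carefully analyzing the log canonical thresholds fiber-by-fiber and is the place where the hypothesis is genuinely used—and second, ensuring the general-position choice of $M'$ is compatible on the possibly-birational model of $Y$ and descends to give a klt pair on $Y$ itself. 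Tracking the descent of nefness and klt-ness through the birational modification, rather than the ampleness perturbation (which is routine), is where the technical care is concentrated.
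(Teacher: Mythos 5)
Your overall strategy is exactly the one the paper intends: its proof of Theorem \ref{thm-add} just says that, armed with Theorem \ref{thm25}, the proof of \cite[Theorem 1.2]{fujino1} goes through, and that argument is the canonical bundle formula decomposition $D\sim_{\mathbb Q}K_Y+B_Y+M_Y$ plus absorption of the moduli part into $\varepsilon A$, precisely as you set it up (you also correctly locate where the dominance of all lc centers is used, namely to force the coefficients of the discriminant to be $<1$). However, your self-described ``key maneuver'' has a genuine gap. The b-nefness of $\mathbf M$ holds only on a suitable higher model $\sigma\colon Y'\to Y$, and on that model the divisor to be perturbed is $M_{Y'}+\varepsilon\sigma^*A$, which is only nef and big over $S$: $\sigma^*A$ is degenerate along $\Exc(\sigma)$, so the sum is \emph{not} ample over $S$, and you cannot choose a general effective member of its relative $\mathbb Q$-linear system in general position. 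Working on $Y$ itself is no better: there $M_Y$ is merely a pushforward, not known to be nef over $S$, and indeed $K_Y+B_Y$ and $M_Y$ are not even individually $\mathbb Q$-Cartier, since Theorem \ref{thm-add} (unlike Theorem \ref{thm11}) does not assume $K_Y$ is $\mathbb Q$-Cartier. The missing idea --- used explicitly in the paper's proof of Theorem \ref{thm11} and in \cite{fujino1} --- is to take an effective $\sigma$-exceptional $\mathbb Q$-divisor $F$ with $-F$ $\sigma$-ample: then $M_{Y'}+\varepsilon\sigma^*A-\delta F$ is ample over $S$ for $0<\delta\ll 1$, a general member $G$ of its $\mathbb Q$-linear system can be chosen with simple normal crossing support together with $B_{Y'}$ and $F$, and the small exceptional summand $\delta F$ added to the boundary is harmless.

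Relatedly, your klt verification is circular as stated: you propose a Bertini argument on $Y$ choosing $M'$ to ``avoid the non-klt locus of $(Y,B_Y)$,'' but $(Y,B_Y)$ is not a pair in the first place (no $\mathbb Q$-Cartier $K_Y+B_Y$), and its klt-ness is essentially the conclusion being proved. The correct check runs through $Y'$: set $\Delta_Y=\sigma_*(B_{Y'}+\delta F+G)$, observe that $K_{Y'}+B_{Y'}+\delta F+G\sim_{\mathbb Q,\,\pi\circ\sigma}\sigma^*(D+\varepsilon A)$, so that $K_Y+\Delta_Y$ is $\mathbb Q$-Cartier and the difference $\sigma^*(K_Y+\Delta_Y)-(K_{Y'}+B_{Y'}+\delta F+G)$ is $\sigma$-exceptional and $\mathbb Q$-linearly trivial over $Y$, hence zero; all discrepancies of $(Y,\Delta_Y)$ are therefore computed by the sub-pair $(Y',B_{Y'}+\delta F+G)$, which has simple normal crossing support and coefficients $<1$ (this is where $\mult_P B_{Y'}<1$, coming from the dominance hypothesis, enters), so $(Y,\Delta_Y)$ is kawamata log terminal. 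Note also that effectivity of $\Delta_Y$ follows from $\sigma_*B_{Y'}=B_Y\geq 0$ even though $B_{Y'}$ may have negative exceptional coefficients --- another point your on-$Y$ formulation glosses over. In short: right skeleton, but the two technical steps the paper ``leaves as an exercise'' (the anti-$\sigma$-ample exceptional correction and the crepant-pullback klt check) are exactly the ones your proposal lacks.
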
 

Let us recall some results in \cite{reid}, \cite{kollar1}, and \cite{fujino1} 
for the reader's convenience. 

\begin{rem}[Known results]
Let $f:X\to Y$ be a contraction morphism associated to 
a $K_X$-negative extremal face such that $X$ is a projective 
variety with only canonical singularities. 
Then it is well known that $Y$ has only rational singularities by 
\cite[Corollary 7.4]{kollar1}. 
It was first proved by Reid when $\dim X\leq 3$ (see \cite{reid}). 

Let $f:(X, \Delta)\to Y$ be a contraction morphism 
associated to a $(K_X+\Delta)$-negative extremal face 
such that $(X, \Delta)$ is a projective divisorial log terminal pair. 
Then there is an effective $\mathbb Q$-divisor 
$\Delta_Y$ on $Y$ such that $(Y, \Delta_Y)$ is kawamata log terminal 
by \cite[Corollary 4.5]{fujino1}. 
In particular, $Y$ has only rational singularities. 

Note that the above results now easily follow from Theorem \ref{thm-add}. 
\end{rem}

The following conjecture is related to Theorem \ref{thm11} 
(cf.~\cite[Conjecture 7.4]{kawamata1}). 

\begin{conj}\label{conj12}Let $(X, \Delta)$ be a projective log canonical 
pair. Assume that 
the log canonical ring 
$$
R(X, \Delta)=\bigoplus _{m \geq 0}H^0(X, \mathcal O_X(\lfloor m(K_X+\Delta)\rfloor))
$$ 
is a finitely generated $\mathbb C$-algebra. 
We put 
$$
Y=\Proj R(X, \Delta). 
$$
Then there is an effective $\mathbb Q$-divisor 
$\Delta_Y$ on $Y$ such that $(Y, \Delta_Y)$ is log canonical. 
\end{conj}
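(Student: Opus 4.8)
The plan is to realize $Y = \Proj R(X, \Delta)$ as the base of an lc-trivial fibration and to read off $\Delta_Y$ from the associated canonical bundle formula; the positivity of the moduli part is exactly what will govern the log canonicity of $(Y, \Delta_Y)$. Theorem \ref{thm-add} is the prototype, but to prove Conjecture \ref{conj12} in full I must drop the klt conclusion and allow log canonical centers of $(X, \Delta)$ that are not dominant onto $Y$, which is where the genuine difficulty lies.

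First I would pass to the log Iitaka fibration of $K_X + \Delta$. After a log resolution $g : (X', \Delta') \to (X, \Delta)$ chosen so that $K_{X'} + \Delta' = g^*(K_X + \Delta) + E$ with $E$ effective and exceptional, $X'$ smooth and $\Supp \Delta'$ simple normal crossing, one has $R(X', \Delta') \simeq R(X, \Delta)$, so $Y$ is unchanged. Using the finite-generation hypothesis I would take a further model $q : W \to Y$ on which the movable part of some $\lfloor m(K_{X'} + \Delta') \rfloor$ is free and induces (the Stein factorization of) the map to $Y$. On a general fibre the log canonical divisor has Iitaka dimension zero, hence is $\mathbb{Q}$-linearly equivalent to its fixed part; subtracting the horizontal fixed divisor from the boundary yields a sub-boundary $\Delta_W$ with $(W, \Delta_W)$ sub log canonical and $K_W + \Delta_W \sim_{\mathbb{Q}, q} 0$. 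After arranging the normalization $q_* \mathcal{O}_W(\lceil -\Delta_W^{<1} \rceil) \simeq \mathcal{O}_Y$ we are in the setting of an lc-trivial fibration, so Theorem \ref{thm11} and the discriminant construction apply.

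I would then invoke the canonical bundle formula for $q : (W, \Delta_W) \to Y$, writing
$$
K_W + \Delta_W \sim_{\mathbb{Q}} q^*(K_Y + B_Y + M_Y),
$$
where $B_Y$ is the discriminant part and $M_Y$ the moduli part. By the defining property of the discriminant the pair $(Y, B_Y)$ is log canonical --- this is the global form of the log canonicity statement in Theorem \ref{thm11}, now also recording boundary data along the images of the non-dominant log canonical centers --- while $M_Y$ is nef by the semipositivity theorem underlying Theorem \ref{thm25}. If one can choose an effective $\mathbb{Q}$-divisor $M_Y^{\sharp} \sim_{\mathbb{Q}} M_Y$ with sufficiently general support, then setting $\Delta_Y := B_Y + M_Y^{\sharp}$ produces an effective boundary for which a Bertini argument keeps $(Y, \Delta_Y)$ log canonical, no log canonical center of $(Y, B_Y)$ being spoiled.

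The hard part is precisely this last choice. Nefness of $M_Y$ does not by itself furnish an effective representative, and perturbing to $M_Y + \varepsilon A$ by an ample divisor $A$, while it does give effectivity, only controls the class up to the $\varepsilon A$-correction and does not by itself let the boundary descend compatibly across the images of the non-dominant log canonical centers, where the bookkeeping of the fixed part entered in the reduction. What the argument really needs is the \emph{semi-ampleness} of the moduli part, namely Conjecture \ref{conj29}: a base-point-free representative of $M_Y$ supplies $M_Y^{\sharp}$ together with a general member transverse to every log canonical center, and the argument closes on the nose. I therefore expect this plan to go through unconditionally exactly in the cases where the moduli part of an lc-trivial fibration is already known to be semi-ample, and to reduce Conjecture \ref{conj12} in general precisely to that semi-ampleness conjecture.
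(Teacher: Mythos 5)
This statement is Conjecture \ref{conj12}, which the paper itself leaves open: it proves only the partial result Theorem \ref{thm13} (log canonicity of $Y$ under the extra hypothesis that $K_Y$ is $\mathbb Q$-Cartier, which the paper stresses can fail) and remarks, immediately after that proof, that Conjecture \ref{conj29} under the extra assumption $\kappa(X_\eta, K_{X_\eta}+\Delta|_{X_\eta})=0$ would yield Conjecture \ref{conj12} via Remark \ref{rem-new}. Your proposal is correct and follows essentially the same route as the paper's own discussion --- reduce via the Iitaka fibration to an lc-trivial fibration with $\kappa=0$ on the generic fiber, observe that the nefness of the moduli part from Theorem \ref{thm25} only gives discrepancy bounds $a(E,Y,0)\geq -1-\varepsilon$ (hence Theorem \ref{thm13}, and note that your intermediate claim that $(Y,B_Y)$ is log canonical should really be run on a higher model $Y'$ where $\mathbf K+\mathbf B$ descends, since $K_Y+B_Y$ need not be $\mathbb Q$-Cartier on $Y$), and correctly identify that an effective general representative $G\sim_{\mathbb Q}M_{Y'}$ supplied by b-semi-ampleness is exactly what closes the argument, so that the conjecture reduces to Conjecture \ref{conj29} precisely as the paper asserts.
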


If $(X, \Delta)$ is kawamata log terminal in Conjecture \ref{conj12}, 
then we have: 

\begin{thm}\label{thm14} 
Let $(X, \Delta)$ be a projective kawamata log terminal pair such that 
$\Delta$ is a $\mathbb Q$-divisor. 
We put 
$$
Y=\Proj R(X, \Delta). 
$$ 
Then there is an effective $\mathbb Q$-divisor $\Delta_Y$ 
on $Y$ such that 
$(Y, \Delta_Y)$ is kawamata log terminal. 
\end{thm}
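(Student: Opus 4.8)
The plan is to realize the rational map $X\dashrightarrow Y$ defined by the linear systems $|\lfloor m(K_X+\Delta)\rfloor|$ as an lc-trivial fibration and then invoke Theorem~\ref{thm-add}. Since $(X,\Delta)$ is kawamata log terminal and $\Delta$ is a $\mathbb{Q}$-divisor, the finite generation theorem of Birkar--Cascini--Hacon--McKernan ensures that $R(X,\Delta)$ is a finitely generated $\mathbb{C}$-algebra, so $Y=\Proj R(X,\Delta)$ is a normal projective variety. If $\kappa(X,K_X+\Delta)=-\infty$, then $R(X,\Delta)=\mathbb{C}$ and $Y$ is empty, so there is nothing to prove; hence I assume $\kappa(X,K_X+\Delta)\ge 0$ and fix $m_0$ so that $R^{(m_0)}=\bigoplus_{k\ge 0}H^0(Y,\mathcal{O}_Y(k))$ with an ample $\mathbb{Q}$-Cartier divisor $A$ on $Y$ playing the role of the polarization.

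Next I would choose a log resolution $p\colon W\to X$ of $(X,\Delta)$ that also resolves the base locus of $|\lfloor m_0(K_X+\Delta)\rfloor|$, so that the induced rational map becomes a morphism $q\colon W\to Y$, and I set $K_W+\Delta_W=p^{*}(K_X+\Delta)$; then $(W,\Delta_W)$ is sub kawamata log terminal with $\Supp\Delta_W$ simple normal crossing. Using finite generation I would write, after possibly enlarging $m_0$, a decomposition $p^{*}(K_X+\Delta)\sim_{\mathbb{Q}}q^{*}A+N$ in which $q^{*}A$ is the semi-ample pullback of the polarization and $N\ge 0$ is the stable fixed part. The essential device is now to absorb the fixed part into the boundary: putting $\Delta_W'=\Delta_W-N$, the pair $(W,\Delta_W')$ is still sub kawamata log terminal with simple normal crossing support, and $K_W+\Delta_W'\sim_{\mathbb{Q}}q^{*}A$, so that $K_W+\Delta_W'\sim_{\mathbb{Q},q}0$ and $q$ is an lc-trivial fibration. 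This step is what lets me bypass the (not known in general) existence of a good minimal model of $(X,\Delta)$: rather than making $K_X+\Delta$ semi-ample on a birational model, I make the complementary fixed divisor part of the boundary.

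It then remains to check the hypotheses of Theorem~\ref{thm-add} for $q\colon(W,\Delta_W')\to Y$ with $\pi\colon Y\to\operatorname{Spec}\mathbb{C}$, $D=A$, and $A'$ any ample Cartier divisor on $Y$ (and $\varepsilon>0$ arbitrary). Since $(W,\Delta_W')$ is sub kawamata log terminal, $\lceil -(\Delta_W')^{<1}\rceil=\lceil -\Delta_W'\rceil$ is effective, and the defining property of $N$ as the stable fixed part yields $q_{*}\mathcal{O}_W(\lceil -(\Delta_W')^{<1}\rceil)\simeq\mathcal{O}_Y$. Moreover $(W,\Delta_W')$ has no log canonical center at all, so the requirement that every log canonical center dominate $Y$ holds vacuously; this is precisely why Theorem~\ref{thm-add} will output a kawamata log terminal, rather than merely log canonical, pair. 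Applying it produces an effective $\mathbb{Q}$-divisor $\Delta_Y$ with $K_Y+\Delta_Y\sim_{\mathbb{Q}}A+\varepsilon A'$ and $(Y,\Delta_Y)$ kawamata log terminal, which is the assertion.

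I expect the main obstacle to be the construction and verification in the second step, namely producing the decomposition $p^{*}(K_X+\Delta)\sim_{\mathbb{Q}}q^{*}A+N$ with $N$ genuinely the stable fixed part and then checking $q_{*}\mathcal{O}_W(\lceil -(\Delta_W')^{<1}\rceil)\simeq\mathcal{O}_Y$; this is the technical heart where finite generation is used, and care is needed with the horizontal components of $N$ and with passing to a sufficiently divisible $m_0$. By contrast, the deep geometric content --- the control of the singularities of $Y$ through the nefness of the moduli part of the lc-trivial fibration $q$ --- is entirely encapsulated in Theorem~\ref{thm-add}, so once the fibration is set up correctly the conclusion follows formally.
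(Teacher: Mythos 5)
Your proposal is correct and is essentially the paper's own argument: the paper proves Theorem \ref{thm14} by combining the proof of Theorem \ref{thm13} --- where one passes to a resolution on which $m_0(K_X+\Delta)=f^*A+E$ with $|mm_0(K_X+\Delta)|=|mf^*A|+mE$ for all $m$, and absorbs the stable fixed part into the boundary via $\Delta_X=\Delta-\frac{1}{m_0}E$ so that $K_X+\Delta_X\sim_{\mathbb Q, f}0$ --- with Theorem \ref{thm-add}, which is exactly your decomposition $p^*(K_X+\Delta)\sim_{\mathbb Q}q^*A+N$ and $\Delta_W'=\Delta_W-N$. Your remark that the sub kawamata log terminal pair has no log canonical centers, so the dominance hypothesis of Theorem \ref{thm-add} holds vacuously and the output is kawamata log terminal, is precisely the ``exercise left to the reader'' in the paper.
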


It is a generalization of Nakayama's result (see \cite[Theorem]{nakayama}), 
which is a complete solution of \cite[Conjecture 7.4]{kawamata1}. 
Theorem \ref{thm13} is a partial answer to Conjecture \ref{conj12}. 

\begin{thm}\label{thm13}
Let $(X, \Delta)$ be a projective log canonical pair 
such that the log canonical ring $R(X, \Delta)$ is a finitely generated 
$\mathbb C$-algebra. 
We assume that $K_Y$ is $\mathbb Q$-Cartier where 
$Y=\Proj R(X, \Delta)$. 
Then $Y$ has only log canonical singularities. 
\end{thm}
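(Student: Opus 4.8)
The plan is to reduce Theorem~\ref{thm13} to Theorem~\ref{thm11} by constructing an lc-trivial fibration over $Y=\Proj R(X,\Delta)$. First I would take a log resolution and set up the appropriate birational model of $(X,\Delta)$: by replacing $(X,\Delta)$ with a resolution I may assume that $X$ is smooth and $\Supp\Delta$ is a simple normal crossing divisor, where $(X,\Delta)$ is now a sub log canonical pair obtained by pulling back $K_X+\Delta$ (so the log canonical ring is preserved). The key point is that finite generation of $R(X,\Delta)$ gives a rational map $X\dashrightarrow Y$, and after further blowing up I would replace this by an honest morphism. The goal is to produce a proper surjective morphism $f\colon (X,\Delta)\to Y$ (possibly after modifying $X$ and $\Delta$ in a way that keeps the pair sub log canonical and keeps the relative condition $f_*\mathcal O_X(\lceil -\Delta^{<1}\rceil)\simeq\mathcal O_Y$) together with the relation $K_X+\Delta\sim_{\mathbb Q,f}0$ needed to invoke Theorem~\ref{thm11}.

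The second step is to verify the relative triviality $K_X+\Delta\sim_{\mathbb Q,f}0$. By the very definition of $Y=\Proj R(X,\Delta)$, the divisor $K_X+\Delta$ is relatively trivial over $Y$: sections of $\mathcal O_X(\lfloor m(K_X+\Delta)\rfloor)$ generate the homogeneous coordinate ring, so for $m$ sufficiently divisible $m(K_X+\Delta)$ is the pullback of the very ample $\mathcal O_Y(m)$ tautological sheaf up to a fixed divisor, and hence $K_X+\Delta\sim_{\mathbb Q}f^*H$ for an ample $\mathbb Q$-Cartier $\mathbb Q$-divisor $H$ on $Y$. This is stronger than what Theorem~\ref{thm11} requires for relative triviality, but I would contract or restrict appropriately so that over $Y$ the class is relatively numerically trivial, which is exactly $K_X+\Delta\sim_{\mathbb Q,f}0$. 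I would also need to check the condition $f_*\mathcal O_X(\lceil -\Delta^{<1}\rceil)\simeq\mathcal O_Y$, which should follow from the normality of $Y$ (as $\Proj$ of a normal integral ring) together with the fact that the resolution was chosen so that $\lceil -\Delta^{<1}\rceil$ is effective and $f$-exceptional-plus-trivial on the generic fiber.

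With these hypotheses in place, Theorem~\ref{thm11} applies directly: since $K_Y$ is assumed $\mathbb Q$-Cartier, the theorem yields that $Y$ has only log canonical singularities, which is exactly the desired conclusion. The main obstacle I anticipate is the construction in the first step, namely arranging a genuine morphism $f\colon(X,\Delta)\to Y$ from a simple normal crossing model that simultaneously satisfies the sheaf condition $f_*\mathcal O_X(\lceil -\Delta^{<1}\rceil)\simeq\mathcal O_Y$ and the sub log canonical property, since resolving the rational map $\Proj R(X,\Delta)$ and keeping track of the round-up $\lceil -\Delta^{<1}\rceil$ under the birational modifications requires care. The triviality $K_X+\Delta\sim_{\mathbb Q,f}0$ and the final invocation of Theorem~\ref{thm11} are then essentially formal, so the entire difficulty is concentrated in setting up the lc-trivial fibration correctly.
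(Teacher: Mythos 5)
Your overall plan (resolve, map onto $Y=\Proj R(X,\Delta)$, and invoke Theorem~\ref{thm11}) is the same as the paper's, but your second step has a genuine gap, and it is precisely the point the paper's proof is built around. It is not true that $K_X+\Delta\sim_{\mathbb Q}f^*H$ for an ample $H$, nor even that $K_X+\Delta$ is relatively numerically trivial over $Y$. Finite generation only gives, after a suitable resolution (see \cite[Lemma 3.2]{birkar-div}), a decomposition
\[
m_0(K_X+\Delta)=f^*A+E,\qquad |mm_0(K_X+\Delta)|=|mf^*A|+mE\quad\text{for all }m\geq 1,
\]
where $A$ is very ample on $Y$ and $E$ is an effective stable fixed divisor. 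In general $E$ has components horizontal over $Y$: on a general fiber $F$ one only knows $\kappa(F,(K_X+\Delta)|_F)=0$, that is, $m_0(K_X+\Delta)|_F\sim E|_F\geq 0$, which is effective but usually neither $\mathbb Q$-linearly nor numerically trivial. So your sentence about ``contracting or restricting appropriately'' to achieve relative numerical triviality cannot be implemented without changing the pair; and your claim $K_X+\Delta\sim_{\mathbb Q}f^*H$ would force $K_X+\Delta$ to be semi-ample, which is far stronger than finite generation for log canonical pairs and is essentially the conclusion of an abundance-type statement, not a formal consequence of the definition of $\Proj$.

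The missing idea is to subtract the fixed part from the boundary. After arranging $\Supp\Delta\cup\Supp E$ to be a simple normal crossing divisor, the paper sets
\[
\Delta_X=\Delta-\frac{1}{m_0}E,
\]
so that $K_X+\Delta_X\sim_{\mathbb Q}\frac{1}{m_0}f^*A$, hence $K_X+\Delta_X\sim_{\mathbb Q, f}0$, and $(X,\Delta_X)$ is still sub log canonical because $\Delta_X\leq\Delta$ coefficientwise. The sheaf condition of Theorem~\ref{thm11} then concerns $\lceil -\Delta_X^{<1}\rceil=\lceil \frac{1}{m_0}E\rceil$, which satisfies $0\leq \lceil \frac{1}{m_0}E\rceil\leq E$; and $f_*\mathcal O_X(\lceil -\Delta_X^{<1}\rceil)\simeq\mathcal O_Y$ is \emph{not} a consequence of normality of $Y$ alone, as you suggest, but of the fixed-part property $|mm_0(K_X+\Delta)|=|mf^*A|+mE$, which forces $f_*\mathcal O_X(E')\simeq\mathcal O_Y$ for every effective $E'\leq E$ (see the proof of \cite[Lemma 3.2]{birkar}). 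With the boundary modified in this way, your final appeal to Theorem~\ref{thm11} goes through exactly as in the paper; without it, the hypothesis $K_X+\Delta\sim_{\mathbb Q,f}0$ of that theorem simply fails for the pair you propose to use.
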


Note that $K_Y$ is not always $\mathbb Q$-Cartier in Conjecture \ref{conj12}. 
Therefore, Theorem \ref{thm13} is far from a complete solution of 
Conjecture \ref{conj12}. 

The following conjecture is open. It is closely related to 
Conjecture \ref{conj12} and Theorem \ref{thm11}. 

\begin{conj}\label{conj15}
Let $(X, \Delta)$ be a projective log canonical pair 
and let $f:X\to Y$ be a contraction morphism between normal projective 
varieties such that 
$$
K_X+\Delta\sim _{\mathbb R, f}0. 
$$ 
Then there is an effective $\mathbb R$-divisor 
$\Delta_Y$ on $Y$ such that $(Y, \Delta_Y)$ is log canonical 
and 
$$
K_X+\Delta\sim _{\mathbb R} f^*(K_Y+\Delta_Y). 
$$
\end{conj}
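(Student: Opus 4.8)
The plan is to attack Conjecture \ref{conj15} through the canonical bundle formula for lc-trivial fibrations, reducing everything to the problem of realizing the moduli part by an effective boundary; the one essential obstacle will turn out to be the semi-ampleness of that moduli part. First I would take a log resolution $\mu\colon X'\to X$ of $(X,\Delta)$ and put $K_{X'}+\Delta'=\mu^*(K_X+\Delta)$, so that $(X',\Delta')$ is sub log canonical with $X'$ smooth, $\Supp\Delta'$ a simple normal crossing divisor, and $K_{X'}+\Delta'\sim_{\mathbb R,\,f\circ\mu}0$. After a standard flattening and a further birational modification of total space and base, I would arrange $(f\circ\mu)_*\mathcal O_{X'}(\lceil-(\Delta')^{<1}\rceil)\simeq\mathcal O_{Y'}$ over a suitable model $Y'$ of $Y$, placing us in precisely the lc-trivial fibration setting of Theorems \ref{thm11} and \ref{thm-add}. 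Since $\mathbb R$-linear equivalence and the log canonical property behave well under pushforward, it suffices to build the boundary on $Y'$ and then push it down to $Y$.

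On such a model I would write the canonical bundle formula
$$
K_{X'}+\Delta'\sim_{\mathbb R}(f')^*\bigl(K_{Y'}+B_{Y'}+M_{Y'}\bigr),
$$
where $B_{Y'}$ is the discriminant part, defined via the log canonical thresholds of $(X',\Delta')$ along the divisors over $Y'$, and $M_{Y'}$ is the moduli part. By construction the discriminant part records exactly the singularities of the total space, so that the log canonicity of $(X',\Delta')$ propagates to $(Y',B_{Y'})$, in accordance with the lc/lt dichotomy of Theorem \ref{thm11}. By Theorem \ref{thm25}, which rests on the semipositivity theorem of \cite{fujino2} and its generalization in \cite[Section 3]{fujino-gongyo3}, after passing to a sufficiently high birational model the moduli part $M_{Y'}$ is nef.

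It then remains to replace $M_{Y'}$ by an effective $\mathbb R$-divisor $M'_{Y'}\geq 0$ with $M'_{Y'}\sim_{\mathbb R}M_{Y'}$, chosen general enough that $(Y',B_{Y'}+M'_{Y'})$ is still log canonical; setting $\Delta_{Y'}=B_{Y'}+M'_{Y'}$ and pushing forward would give an effective $\Delta_Y$ with $(Y,\Delta_Y)$ log canonical and $K_X+\Delta\sim_{\mathbb R}f^*(K_Y+\Delta_Y)$. This is exactly where the argument stalls, and this is why the conjecture is open: nefness alone yields no effective representative of $M_{Y'}$, and even one that existed need not avoid the log canonical centers of $(Y',B_{Y'})$. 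What the argument genuinely needs is the semi-ampleness of the moduli part, Conjecture \ref{conj29}; semi-ampleness would let me take $M'_{Y'}$ to be a general member of the free linear system defining the morphism associated with $M_{Y'}$, which by Bertini automatically misses the relevant centers and preserves log canonicity. Hence the main obstacle is precisely Conjecture \ref{conj29}: with only the nefness supplied by Theorem \ref{thm25}, one cannot do better than the perturbed conclusion $K_Y+\Delta_Y\sim_{\mathbb Q,\pi}D+\varepsilon A$ of Theorem \ref{thm-add}, rather than the exact $\mathbb R$-linear equivalence demanded here. Secondary technical difficulties are the descent of the $b$-nef moduli divisor to an honest divisor on $Y$ and the uniform passage from $\mathbb Q$- to $\mathbb R$-coefficients throughout the Hodge-theoretic construction.
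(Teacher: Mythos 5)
This statement is Conjecture \ref{conj15} of the paper and is open: the paper offers no proof, only the observation (Remark \ref{rem24} together with Remark \ref{rem-new}) that it would follow from the b-semi-ampleness conjecture, Conjecture \ref{conj29}. Your analysis is correct and follows essentially the same route as the paper's own discussion: pass to a log resolution and an lc-trivial fibration, write $K_{X'}+\Delta'\sim (f')^*(K_{Y'}+B_{Y'}+M_{Y'})$, note that Theorem \ref{thm25} supplies only b-nefness of $\mathbf M$, and identify the semi-ampleness of the moduli part as the precise missing ingredient, since semi-ampleness would let one replace $M_{Y'}$ by a general effective member $G$ with $(Y', B_{Y'}+G)$ still sub log canonical and then push down --- exactly the mechanism of Remark \ref{rem-new}. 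One refinement in the paper that you leave open is worth recording: you list the passage from $\mathbb Q$- to $\mathbb R$-coefficients ``throughout the Hodge-theoretic construction'' as a residual difficulty, but Remark \ref{rem24} disposes of it \emph{before} any Hodge theory enters, by writing $K_X+\Delta=\sum_{i=1}^{k}r_i(K_X+\Delta_i)$ with each $(X,\Delta_i)$ log canonical, $K_X+\Delta_i$ $f$-nef, $0<r_i<1$, and $\sum_{i=1}^{k}r_i=1$; since each $K_X+\Delta_i$ is then numerically $f$-trivial, \cite[Theorem 4.9]{fujino-gongyo2} gives $K_X+\Delta_i\sim _{\mathbb Q, f}0$ for every $i$, so the conjecture reduces to the case where $\Delta$ is a $\mathbb Q$-divisor with $K_X+\Delta\sim _{\mathbb Q, f}0$ and the lc-trivial machinery applies with its usual $\mathbb Q$-coefficients. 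A small caution on your phrasing: on a higher model $Y'$ the discriminant $B_{Y'}$ is in general only a sub-boundary (its coefficients are $\leq 1$ but possibly negative); effectivity is recovered only after pushing down to $Y$, as in the proof of Theorem \ref{thm11}. With these points understood, your diagnosis --- nefness alone yields only the $\varepsilon$-perturbed conclusion of Theorem \ref{thm-add}, while the exact $\mathbb R$-linear equivalence demanded here needs Conjecture \ref{conj29} with $S$ a point --- coincides with the paper's.
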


Of course, Conjecture \ref{conj15} follows from 
the b-semi-ampleness conjecture of the moduli parts of 
lc-trivial fibrations (see Conjecture \ref{conj29} and 
Remark \ref{rem24}). 

From now on, the variety $X$ is not always algebraic. 
We treat compact K\"ahler manifolds. 
The following theorem is also missing in the literature. 
Note that we can reduce the problem to the case when the variety is projective 
by taking the Iitaka fibration. When $X$ is projective, Theorem \ref{thm18} 
is well known (see \cite{bchm}). 

\begin{thm}[{cf.~\cite{bchm} and \cite{fujino-mori}}]\label{thm18}  
Let $X$ be a compact K\"ahler manifold and 
let $\Delta$ be an effective $\mathbb Q$-divisor 
on $X$ such that $(X, \Delta)$ is kawamata log terminal. 
Then the log canonical ring 
$$
R(X, \Delta)=\bigoplus _{m\geq 0}H^0(X, \mathcal O_X(\lfloor m(K_X+\Delta)\rfloor))
$$ 
is a finitely generated $\mathbb C$-algebra. 
\end{thm}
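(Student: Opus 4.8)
The plan is to reduce to the case where the base of the Iitaka fibration is projective and then to invoke the finite generation theorem of \cite{bchm} together with the nefness of the moduli part supplied by this paper. First, if $\kappa(X, K_X+\Delta)=-\infty$, then $H^0(X, \mathcal O_X(\lfloor m(K_X+\Delta)\rfloor))=0$ for every $m>0$, so $R(X, \Delta)=\mathbb C$ is trivially finitely generated; thus I may assume $\kappa(X, K_X+\Delta)\geq 0$. Next I would pass to the Iitaka fibration associated to $K_X+\Delta$: after a suitable bimeromorphic morphism $\mu:X'\to X$, where $X'$ is again a compact K\"ahler manifold since bimeromorphic modifications of compact K\"ahler manifolds are K\"ahler, I obtain a projective surjective morphism $f:X'\to Y$ onto a smooth projective variety $Y$ with $\dim Y=\kappa(X, K_X+\Delta)$, whose general fiber $F$ is a compact K\"ahler manifold with $\kappa((K_{X'}+\Delta')|_F)=0$. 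By the standard construction of the Iitaka fibration I would choose $\Delta'$ on $X'$ so that $(X', \Delta')$ is sub kawamata log terminal, $K_{X'}+\Delta'\sim _{\mathbb Q, f}0$ (the relative fixed part being absorbed into the boundary, using that $h^0(F, \mathcal O_F(m(K_{X'}+\Delta')|_F))=1$ for suitable $m$), $f_*\mathcal O_{X'}(\lceil -\Delta'^{<1}\rceil)\simeq \mathcal O_Y$, and $R(X', \Delta')\simeq R(X, \Delta)$ as graded $\mathbb C$-algebras.

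I would then apply the canonical bundle formula of \cite{fujino-mori} to the klt-trivial fibration $f:(X', \Delta')\to Y$ and write $K_{X'}+\Delta'\sim _{\mathbb Q}f^*(K_Y+B_Y+M_Y)$, where $(Y, B_Y)$ is kawamata log terminal, $B_Y$ is the discriminant part, and $M_Y$ is the moduli part. The main ingredient of this paper, the nefness of the moduli parts of lc-trivial fibrations, shows that $M_Y$ is nef; the Hodge-theoretic semipositivity underlying it applies since $X'$ is K\"ahler. Writing $D=K_Y+B_Y+M_Y$, the identity $f_*\mathcal O_{X'}(\lfloor m(K_{X'}+\Delta')\rfloor)\simeq \mathcal O_Y(\lfloor mD\rfloor)$ for sufficiently divisible $m$ yields $R(X', \Delta')\simeq \bigoplus _{m\geq 0}H^0(Y, \mathcal O_Y(\lfloor mD\rfloor))=:R(Y, D)$; moreover $D$ is big because $\dim Y=\kappa(X, K_X+\Delta)$. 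It thus remains to prove that $R(Y, D)$ is finitely generated on the projective variety $Y$, and here all the difficulty is concentrated.

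The hard part is precisely this last step, because the moduli part $M_Y$ is only known to be nef and not semi-ample: its semi-ampleness is the open Conjecture \ref{conj29}. Consequently $D$ is not directly the log canonical class of a kawamata log terminal pair, and \cite{bchm} cannot be applied off the shelf. To circumvent this I would exploit the bigness of $D$ together with Theorem \ref{thm-add}, which applies since $(X',\Delta')$ is klt and hence every log canonical center is dominant onto $Y$: taking $S=\operatorname{Spec}\mathbb C$, an ample Cartier divisor $A$ on $Y$, and a small rational $\varepsilon>0$, it produces an effective $\mathbb Q$-divisor $\Delta_{Y,\varepsilon}$ with $(Y, \Delta_{Y,\varepsilon})$ kawamata log terminal and $K_Y+\Delta_{Y,\varepsilon}\sim _{\mathbb Q}D+\varepsilon A$, the class $D+\varepsilon A$ being big. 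By \cite{bchm}, each such pair has a good minimal model and the adjoint ring $R(Y, D+\varepsilon A)$ is finitely generated.

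Finally I would pass from $D+\varepsilon A$ back to $D$ by a perturbation and stabilization argument. One clean way is to show that the two-parameter ring $\bigoplus _{(i,j)}H^0(Y, \mathcal O_Y(\lfloor iD+jA\rfloor))$ is finitely generated, using that $iD+jA\sim_{\mathbb Q}i(K_Y+\Delta_{Y,\varepsilon})+(j-i\varepsilon)A$ places it within the scope of the finite generation of adjoint rings in \cite{bchm}; restricting to the face $j=0$, that is, passing to the quotient by the ideal of elements of positive $A$-degree, exhibits $R(Y, D)$ as a quotient of a finitely generated $\mathbb C$-algebra and hence finitely generated. Alternatively one uses the finiteness of models of \cite{bchm} to see that the ample models of $D+\varepsilon A$ stabilize for $\varepsilon$ in a small interval $(0,\varepsilon_0)$ and that the resulting decomposition computes $R(Y, D)$. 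Tracing back the isomorphisms $R(X,\Delta)\simeq R(X',\Delta')\simeq R(Y,D)$ then yields the finite generation of $R(X, \Delta)$. I expect the verification that the nefness of $M_Y$ alone, without semi-ampleness, suffices to force $R(Y,D)$ to be finitely generated to be the genuinely delicate point, with the K\"ahler bookkeeping in the Iitaka fibration being routine by comparison.
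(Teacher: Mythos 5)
Your first half matches the paper's actual proof (Theorem \ref{thm31}): reduce to the Iitaka fibration $f:X'\to Y$ over a smooth projective base, note that the Hodge-theoretic semipositivity goes through for K\"ahler $X'$ (Theorem \ref{thm43}), and identify $R(X,\Delta)$ up to truncation with $R(Y,D)$ where $D=K_Y+B_Y+M_Y$ is big and $M_Y$ is nef. The gap is in your final step. Via Theorem \ref{thm-add} you only obtain klt pairs with $K_Y+\Delta_{Y,\varepsilon}\sim_{\mathbb Q}D+\varepsilon A$ for $\varepsilon>0$, and your mechanism for removing $\varepsilon A$ does not close. In the bigraded ring $\bigoplus_{(i,j)}H^0(Y,\mathcal O_Y(\lfloor iD+jA\rfloor))$, the rewriting $iD+jA\sim_{\mathbb Q}i(K_Y+\Delta_{Y,\varepsilon})+(j-i\varepsilon)A$ places $(i,j)$ in the cone of klt adjoint classes covered by \cite{bchm} only when $j\geq i\varepsilon$; on the face $j=0$ --- the one you must restrict to --- the coefficient $j-i\varepsilon$ is negative for every $i>0$, and no choice of $\varepsilon$, or of finitely many $\varepsilon$'s, puts $iD$ itself into that cone, so \cite{bchm} never sees the subring you need. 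The stabilization alternative has the same defect: finiteness of ample models applies along a polytope of klt boundaries, and the segment $\{D+tA\}$ carries such boundaries only for $t>0$; concluding anything at $t=0$ presupposes exhibiting (a multiple of) $D$ as a klt adjoint divisor, which is exactly what is missing. Indeed, finite generation of all the perturbed rings cannot by itself force finite generation at $\varepsilon=0$: in the paper's own Example \ref{ex-wilson}, $D=2\pi^*H+2C'$ is nef and big, $D+\varepsilon A$ is ample (so $R(Z,D+\varepsilon A)$ is finitely generated) for every $\varepsilon>0$, yet $R(Z,D)$ is not finitely generated.

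The missing idea is the trick of \cite[Theorem 5.2]{fujino-mori}, which the paper invokes wholesale: use the bigness of $D$ itself rather than an extraneous ample divisor. Writing $D\sim_{\mathbb Q}A'+E$ with $A'$ ample and $E$ effective, one has, for small rational $\epsilon>0$,
$$
(1+\epsilon)D\sim_{\mathbb Q}K_Y+B_Y+\epsilon E+(M_Y+\epsilon A'),
$$
and $M_Y+\epsilon A'$ is nef plus ample, hence ample, hence $\mathbb Q$-linearly equivalent to a general effective divisor $G$ with small coefficients; after passing to a suitable birational model $Y'$ this yields a kawamata log terminal pair $(Y',\Delta')$ with $K_{Y'}+\Delta'$ big and pulled back from $(1+\epsilon)D$. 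Since $(1+\epsilon)D$ lies on the ray $\mathbb R_{\geq 0}D$, the rings $R(Y,D)$ and $R(Y',\Delta')$ share a common truncation, and \cite{bchm} applies directly. So nefness of $M_Y$ plus bigness of $D$ does suffice, but only by absorbing the moduli part into a klt boundary for a scaling of $D$ --- precisely the structure of $D=K_Y+B_Y+M_Y$ that your perturbation off the ray discards.
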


As a special case of Theorem \ref{thm18}, we have: 

\begin{cor}\label{cor19} 
Let $X$ be a compact K\"ahler manifold. 
Then the canonical ring 
$$
R(X)=\bigoplus _{m \geq 0} H^0(X, \omega_X^{\otimes m})
$$ 
is a finitely generated $\mathbb C$-algebra. 
\end{cor}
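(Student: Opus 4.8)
The plan is to obtain Corollary \ref{cor19} as the special case $\Delta = 0$ of Theorem \ref{thm18}, so essentially no new work is required beyond checking that the hypotheses apply and that the two rings coincide.

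First I would verify that $(X, 0)$ satisfies the assumptions of Theorem \ref{thm18}. Since $X$ is a compact K\"ahler manifold it is in particular smooth, and the zero divisor is trivially an effective $\mathbb Q$-divisor; moreover $(X, 0)$ is kawamata log terminal because all discrepancies over a smooth variety are nonnegative and hence strictly greater than $-1$. Thus Theorem \ref{thm18} applies to the pair $(X, 0)$ and tells us that $R(X, 0)$ is a finitely generated $\mathbb C$-algebra.

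Next I would identify $R(X, 0)$ with $R(X)$. Taking $\Delta = 0$ gives $m(K_X + \Delta) = m K_X$, and since $X$ is smooth $K_X$ is an integral divisor, so $\lfloor m K_X\rfloor = m K_X$ for every $m \geq 0$. Hence $\mathcal O_X(\lfloor m(K_X + \Delta)\rfloor) = \mathcal O_X(m K_X) = \omega_X^{\otimes m}$, and therefore
$$
R(X, 0) = \bigoplus_{m \geq 0} H^0\bigl(X, \mathcal O_X(\lfloor m K_X\rfloor)\bigr) = \bigoplus_{m \geq 0} H^0\bigl(X, \omega_X^{\otimes m}\bigr) = R(X).
$$
Combining this with the previous step shows that $R(X)$ is a finitely generated $\mathbb C$-algebra.

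The only point that needs care is the rounding operation, which is harmless precisely because the smoothness of $X$ forces $K_X$ to be Cartier; there is otherwise no genuine obstacle, as all the substance of the statement is already contained in Theorem \ref{thm18}.
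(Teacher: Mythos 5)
Your proposal is correct and matches the paper exactly: the paper derives Corollary \ref{cor19} as the special case $\Delta=0$ of Theorem \ref{thm18}, just as you do. Your additional checks --- that $(X,0)$ is kawamata log terminal because $X$ is smooth, and that $\lfloor mK_X\rfloor=mK_X$ so $R(X,0)=R(X)$ --- are the same routine verifications the paper leaves implicit.
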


We note that there exists a compact complex non-K\"ahler 
manifold whose canonical ring 
is not a finitely generated $\mathbb C$-algebra (see 
Example \ref{ex-wilson}). 

The following conjecture is still open even when 
$X$ is projective. 

\begin{conj}\label{conj20}
Let $X$ be a compact K\"ahler manifold and let $\Delta$ be 
an effective $\mathbb Q$-divisor on $X$ such that 
$(X, \Delta)$ is log canonical. 
Then 
the log canonical 
ring 
$$
R(X, \Delta)=\bigoplus _{m\geq 0}H^0(X, \mathcal O_X(\lfloor m(K_X+\Delta)\rfloor))
$$ 
is a finitely generated $\mathbb C$-algebra. 
\end{conj}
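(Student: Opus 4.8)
The plan is to imitate the strategy indicated for the klt case of Theorem \ref{thm18} and reduce the finite generation of $R(X,\Delta)$ to a problem about a projective log canonical pair. First I would pass to a log resolution and to a model realizing the Iitaka fibration $f\colon X\dashrightarrow Y$ of $K_X+\Delta$; after this modification $f$ becomes an lc-trivial fibration over a projective base $Y$ with $\dim Y=\kappa(X,K_X+\Delta)$, and the section ring is preserved under the modification and is identified, in high enough degree, with a section ring on $Y$. Via the canonical bundle formula for lc-trivial fibrations I would then write $K_X+\Delta\sim_{\mathbb Q}f^*(K_Y+B_Y+M_Y)$, where the discriminant part $B_Y$ makes $(Y,B_Y)$ log canonical, the moduli part $M_Y$ is nef by the main ingredient of this paper, and $K_Y+B_Y+M_Y$ is big because $f$ is the Iitaka fibration. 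In this way the finite generation of $R(X,\Delta)$ becomes equivalent to the finite generation of the section ring of the big class $K_Y+B_Y+M_Y$ on the lower-dimensional projective variety $Y$.

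Next I would try to finish by induction on dimension. If $M_Y$ were semi-ample (Conjecture \ref{conj29}), I could replace it by a general effective $\mathbb Q$-member $M_Y'$, keeping $(Y,B_Y+M_Y')$ log canonical, and would be reduced to the finite generation of a genuine log canonical ring on $Y$. Since $\dim Y<\dim X$ whenever $(X,\Delta)$ is not of log general type, the inductive hypothesis, together with the klt base case supplied by Theorem \ref{thm18} and \cite{bchm}, would apply; the remaining log general type case, where $K_X+\Delta$ is big, I would attack by running a $(K_X+\Delta)$-minimal model program, which is available for log canonical pairs thanks to the existence of log canonical flips (\cite{birkar}, \cite{hacon-xu}), and then invoking abundance on the resulting minimal model.

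The hard part will be exactly the two places where an unconditional input is missing, and this is why the conjecture is open even for projective $X$. The nefness of $M_Y$ is known, but upgrading it to semi-ampleness is Conjecture \ref{conj29}, which is open in dimension at least four; likewise the abundance needed to conclude in the log general type case is open for log canonical pairs. Thus I expect the reduction to the projective big case to go through unconditionally, while the genuine obstruction is concentrated in the semi-ampleness of the moduli part. A second, Kähler-specific difficulty arises already in the first step: the minimal model program and the vanishing and semipositivity theorems are far less developed for compact Kähler (non-algebraic) manifolds, so to make the reduction to the projective case and the nefness of $M_Y$ rigorous in this setting I would need the Hodge-theoretic semipositivity results of \cite{fujino2}, \cite{fujino-fujisawa}, and \cite{ffs} in a form valid for the Kähler Iitaka fibration.
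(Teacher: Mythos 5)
The statement you set out to prove is Conjecture \ref{conj20}, which the paper records as \emph{open} -- it offers no proof, only the conditional reduction discussed in Remark \ref{rem47} and Observation IV (\ref{410}). Your proposal is rightly honest that a complete argument is out of reach, and you correctly locate two of the deep inputs (b-semi-ampleness of the moduli part, and abundance/finite generation for log canonical pairs of log general type, plus the absence of a K\"ahler MMP). To that extent your assessment matches the paper's. But there is one genuine misjudgment: your claim that ``the reduction to the projective big case [should] go through unconditionally'' is precisely what the paper denies. The introduction states that we do not know whether Conjecture \ref{conj20} can be reduced to the projective case, and Remark \ref{rem47} is explicit that Theorem \ref{thm43} -- the b-nefness of $\mathbf M$ over a projective base, which is exactly the K\"ahler semipositivity input you invoke -- is \emph{not sufficient} for the Fujino--Mori reduction argument in the lc setting; one needs Conjecture \ref{conj44}, the K\"ahler version of b-semi-ampleness.

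The reason nefness suffices in the klt case (Theorem \ref{thm31}) but not here is concrete. When the discriminant $B_{Y'}$ has all coefficients $<1$, bigness of $K_{Y'}+B_{Y'}+M_{Y'}$ lets one rescale and absorb a small ample divisor into the nef part, so $M_{Y'}+tA$ becomes ample and can be replaced by a general effective member while the boundary stays klt; this is how \cite{fujino-mori} converts a merely nef moduli part into an honest klt pair on $Y'$ and hands the problem to \cite{bchm}. When $B_{Y'}$ has coefficient-one components, that perturbation destroys log canonicity along the lc centers, and a divisor that is only nef need not admit \emph{any} effective $\mathbb Q$-representative avoiding them; one needs $M_{Y'}$ semi-ample to choose a general member $G\sim_{\mathbb Q}M_{Y'}$ with $(Y', B_{Y'}+G)$ log canonical (compare Remark \ref{rem-new}). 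So the obstruction is not concentrated after an unconditional reduction: the reduction to the projective case is itself conditional on Conjecture \ref{conj44}, which the paper warns may be \emph{harder} than Conjecture \ref{conj29}, since there is no good moduli theory for compact K\"ahler manifolds. Your secondary endgame -- an lc MMP plus abundance in the log general type case -- is likewise conditional (termination, Conjecture \ref{conj61}, and abundance are open, and the MMP-based reductions of \ref{214} are unavailable in the K\"ahler category, as Observation IV notes), but you flagged that; the reduction step is the one place where your expectations and the paper's genuinely diverge.
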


We do not know if we can reduce Conjecture \ref{conj20} to the case 
when the variety is projective or not (see Remark \ref{rem47}). 

From Section \ref{sec-pre} to Section \ref{sec2}, we assume that 
all the varieties are algebraic for simplicity, although some of 
the results can be generalized to analytic varieties. 
Section \ref{sec-pre} collects some basic definitions. 
In Section \ref{sec-new}, we discuss lc-trivial fibrations 
and give some new observations. 
Section \ref{sec2} is devoted to the proofs of the main results. 
In Section \ref{sec3}, we discuss some analytic generalizations 
and related topics. 
We note that we just explain how to adapt the arguments to 
the analytic settings and discuss Theorem \ref{thm18}, 
Corollary \ref{cor19}, and so on. 
In Section \ref{sec-counterexamples}, we discuss some examples 
of non-K\"ahler manifolds, which clarify 
the main difference between K\"ahler manifolds and 
non-K\"ahler manifolds. Note that 
Corollary \ref{cor19} can not be generalized for non-K\"ahler 
manifolds (see Example \ref{ex-wilson}). 
In Section \ref{sec-appendix}:~Appendix, 
we quickly discuss the minimal model program for log canonical 
pairs and describe some related examples by J\'anos Koll\'ar 
for the reader's convenience. 

\begin{ack} 
The author was partially supported by the Grant-in-Aid for Young Scientists 
(A) $\sharp$24684002 from JSPS. 
He would like to thank Professor Shigefumi Mori and Yoshinori Gongyo for 
useful comments and questions. 
\end{ack}

This paper is a supplement to the author's previous 
papers \cite{fujino1}, \cite{fujino6}, and so on. 
For some recent related topics, see, for example, \cite{alexeev-borisov}, 
\cite{birkar2}, \cite{chp}, \cite{horing-peternell}, and \cite{horing-peternell2}. 

We will work over $\mathbb C$, the complex number field, throughout 
this paper. We will make use of the standard notation as in \cite{kollar-mori} 
and \cite{fujino6}. 

\section{Preliminaries}\label{sec-pre} 

Let us recall some basic definitions on 
singularities of pairs. For the details, see \cite{kollar-mori} 
and \cite{fujino6}. 

\begin{say}[Pairs] 
A pair $(X, \Delta)$ consists of a normal variety $X$ and 
an $\mathbb R$-divisor $\Delta$ on $X$ such that 
$K_X+\Delta$ is $\mathbb R$-Cartier. 
A pair $(X, \Delta)$ is called {\em{sub kawamata log terminal}} 
(resp.~{\em{sub log canonical}}) if for any 
proper birational morphism 
$g:Z\to X$ from a normal variety $Z$, every coefficient 
of $\Delta_Z$ is $<1$ (resp.~$\leq 1$) where 
$$K_Z+\Delta_Z:=g^*(K_X+\Delta).$$ 
A pair 
$(X, \Delta)$ is called {\em{kawamata log terminal}} 
(resp.~{\em{log canonical}}) if $(X, \Delta)$ is sub kawamata log terminal 
(resp.~sub log canonical) and $\Delta$ is effective. 
If $(X, 0)$ is kawamata log terminal, then we simply say that 
$X$ has only {\em{log terminal singularities}}. 

Let $(X, \Delta)$ be a sub log canonical pair and let $W$ be a closed 
subset of $X$. 
Then $W$ is called a {\em{log canonical center}} of $(X, \Delta)$ if there are 
a proper birational morphism 
$g:Z\to X$ from a normal variety $Z$ and a prime divisor 
$E$ on $Z$ such that $\mult _E\Delta_Z=1$ and $g(E)=W$. 

We note that $-\mult _E\Delta_Z$ is denoted by $a(E, X, \Delta)$ 
and 
is called the {\em{discrepancy coefficient}} of $E$ with respect to $(X, \Delta)$. 

Let $D=\sum _id_iD_i$ be an $\mathbb R$-divisor on $X$ such that 
$D_i$ is a prime divisor for every $i$ and that $D_i\ne D_j$ for $i\ne j$. 
Then $\lceil D\rceil$ (resp.~$\lfloor D\rfloor$) denotes 
the {\em{round-up}} (resp.~{\em{round-down}}) of 
$D$. 
We put 
$$
D^{<1}=\sum _{d_i<1}d_iD_i. 
$$
\end{say}

In this paper, we use the notion of {\em{b-divisors}} introduced by 
Shokurov. 
For the details, see, for example, \cite[Section 3]{fujino6.5}. 

\begin{say}[Canonical b-divisors and discrepancy b-divisors]
Let $X$ be a normal variety and let $\omega$ be a top rational differential 
form of $X$. 
Then $(\omega)$ defines a b-divisor $\mathbf K$. 
We call $\mathbf K$ the {\em{canonical b-divisor}} of $X$. 
The {\em{discrepancy b-divisor}} $\mathbf A=\mathbf A(X, \Delta)$ 
of a pair $(X, \Delta)$ is the $\mathbb R$-b-divisor 
of $X$ with the trace $\mathbf A_Y$ defined by the 
formula 
$$
K_Y=f^*(K_X+\Delta)+\mathbf A_Y, 
$$ 
where $f:Y\to X$ is a proper birational morphism of normal varieties. 
Similarly, we define $\mathbf A^*=\mathbf A^*(X, \Delta)$ by 
$$
\mathbf A_Y^*=\sum _{a_i>-1}a_i A_i
$$ 
for 
$$
K_Y=f^*(K_X+\Delta)+\sum a_i A_i, 
$$ 
where $f:Y\to X$ is a proper birational morphism of normal varieties. 
\end{say}

\begin{say}[b-nef and b-semi-ample $\mathbb Q$-b-divisors]\label{say2}  
Let $X$ be a normal variety and let $X\to S$ be a proper surjective morphism onto a variety $S$. 
A $\mathbb Q$-b-divisor $\mathbf D$ of $X$ is {\em{b-nef over $S$}} 
(resp.~{\em{b-semi-ample over $S$}}) if there exists a proper birational morphism $X'\to X$ from a normal 
variety $X'$ such that $\mathbf D=\overline {\mathbf D_{X'}}$ and $\mathbf D_{X'}$ is nef 
(resp.~semi-ample) relative to the induced morphism $X'\to S$. 
A $\mathbb Q$-b-divisor $\mathbf D$ of $X$ is {\em{$\mathbb Q$-b-Cartier}} 
if there is a proper birational morphism $X'\to X$ from a normal 
variety $X'$ such that $\mathbf D=\overline{\mathbf D_{X'}}$. 
\end{say} 

\section{Lc-trivial fibrations}\label{sec-new} 

Let us recall the definition of {\em{lc-trivial fibrations}}. 

\begin{defn}[Lc-trivial fibrations]\label{def22} 
An {\em{lc-trivial fibration}} $f:(X, \Delta)\to Y$ consists of 
a proper surjective morphism between normal varieties with 
connected fibers and a pair $(X, \Delta)$ satisfying the 
following properties: 
\begin{itemize}
\item[(i)] $(X, \Delta)$ is sub log canonical over the 
generic point of $Y$, 
\item[(ii)] $\rank f_*\mathcal O_X(\lceil \mathbf A^*(X, \Delta)\rceil)=1$, and 
\item[(iii)] there exists a $\mathbb Q$-Cartier $\mathbb Q$-divisor 
$D$ on $Y$ such that 
$$
K_X+\Delta\sim _{\mathbb Q}f^*D. 
$$
\end{itemize}
\end{defn}

\begin{rem}
Let $f:X\to Y$ be a proper surjective morphism 
between normal varieties with $f_*\mathcal O_X\simeq \mathcal O_Y$. 
Assume that $(X, \Delta)$ is log canonical over the generic point of 
$Y$. 
Then we have 
$$
\rank f_*\mathcal O_X(\lceil \mathbf A^*(X, \Delta)\rceil)=1. 
$$
\end{rem}

We give a standard example of lc-trivial fibrations.

\begin{ex}
Let $(X, \Delta)$ be a sub log canonical pair such that 
$X$ is smooth and that $\Supp \Delta$ is a simple normal crossing 
divisor on $X$. Let $f:X\to Y$ be a proper surjective 
morphism onto a normal variety $Y$ such that 
$$
K_X+\Delta\sim _{\mathbb Q, f}0
$$ 
and that 
$$
f_*\mathcal O_X(\lceil -\Delta^{<1}\rceil)\simeq \mathcal O_Y. 
$$ 
Then $f:(X, \Delta)\to Y$ is an lc-trivial fibration.
\end{ex}

We give a remark on the definition of lc-trivial fibrations 
for the reader's convenience. 

\begin{rem}[Lc-trivial fibrations and klt-trivial fibrations]\label{rem25}
In \cite[Definition 2.1]{ambro2}, $(X, \Delta)$ is assumed to 
be sub kawamata log terminal over the generic point of $Y$. 
Therefore, Definition \ref{def22} is wider than Ambro's original 
definition of lc-trivial fibrations. 
When $(X, \Delta)$ is sub kawamata log terminal over the 
generic point of $Y$ in Definition \ref{def22}, we call 
$f:(X, \Delta)\to Y$ a {\em{klt-trivial fibration}} (see \cite[Definition 3.1]{fujino-gongyo3}). 
\end{rem}

We need the notion of {\em{induced lc-trivial fibrations}}, 
{\em{discriminant $\mathbb Q$-divisors}}, {\em{moduli $\mathbb Q$-divisors}}, 
and so on in order to discuss lc-trivial fibrations. 

\begin{say}[Induced lc-trivial fibrations by base changes]\label{say23}
Let $f:(X, \Delta)\to Y$ be an lc-trivial 
fibration and let $\sigma:Y'\to Y$ be a generically finite 
morphism. Then we have an induced lc-trivial fibration $f':(X', \Delta_{X'})\to Y'$, 
where 
$\Delta_{X'}$ is defined by $\mu^*(K_X+\Delta)=K_{X'}+\Delta_{X'}$: 
$$
\xymatrix{
   (X', \Delta_{X'}) \ar[r]^{\mu} \ar[d]_{f'} & (X, \Delta)\ar[d]^{f} \\
   Y' \ar[r]_{\sigma} & Y,
} 
$$
Note that $X'$ is the normalization of 
the main component of $X\times _{Y}Y'$. 
We sometimes replace $X'$ with $X''$ where $X''$ is a normal variety 
such that there is a proper birational morphism 
$\varphi:X''\to X'$. 
In this case, we set 
$K_{X''}+\Delta_{X''}=\varphi^*(K_{X'}+\Delta_{X'})$.  
\end{say}

\begin{say}
[Discriminant $\mathbb Q$-b-divisors and moduli $\mathbb Q$-b-divisors]\label{say28} 
Let us consider an lc-trivial fibration 
$f:(X, \Delta)\to Y$ as in Definition \ref{def22}. 
We take a prime divisor $P$ on $Y$. 
By shrinking $Y$ around the generic point of $P$, 
we assume that $P$ is Cartier. We set 
$$
b_P=\max \left\{t \in \mathbb Q\, \left|\, 
\begin{array}{l}  {\text{$(X, \Delta+tf^*P)$ is sub log canonical}}\\
{\text{over the generic point of $P$}} 
\end{array}\right. \right\} 
$$ 
and 
set $$
B_Y=\sum _P (1-b_P)P, 
$$ 
where $P$ runs over prime divisors on $Y$. Then it is easy to  see that 
$B_Y$ is a well-defined $\mathbb Q$-divisor on $Y$ and is called the {\em{discriminant 
$\mathbb Q$-divisor}} of $f:(X, \Delta)\to Y$. We set 
$$
M_Y=D-K_Y-B_Y
$$ 
and call $M_Y$ the {\em{moduli $\mathbb Q$-divisor}} of $f:(X, \Delta)\to Y$. 
Let $\sigma:Y'\to Y$ be a proper birational morphism 
from a normal variety $Y'$ and let $f':(X', \Delta_{X'})\to Y'$ be the 
induced lc-trivial fibration 
by $\sigma:Y'\to Y$ (see \ref{say23}). 
We can define $B_{Y'}$, $K_{Y'}$ and $M_{Y'}$ such that 
$$\sigma^*D=K_{Y'}+B_{Y'}+M_{Y'},$$ 
$\sigma_*B_{Y'}=B_Y$, $\sigma _*K_{Y'}=K_Y$, and $\sigma_*M_{Y'}=M_Y$. Hence 
there exist a unique $\mathbb Q$-b-divisor $\mathbf B$ such that 
$\mathbf B_{Y'}=B_{Y'}$ for every $\sigma:Y'\to Y$ and a unique 
$\mathbb Q$-b-divisor $\mathbf M$ such that $\mathbf M_{Y'}=M_{Y'}$ for 
every $\sigma:Y'\to Y$. 
Note that $\mathbf B$ is called the {\em{discriminant $\mathbb Q$-b-divisor}} and 
that $\mathbf M$ is called the {\em{moduli $\mathbb Q$-b-divisor}} associated to 
$f:(X, \Delta)\to Y$. 
We sometimes simply say that $\mathbf M$ is the {\em{moduli part}} 
of $f:(X, \Delta)\to Y$. 
\end{say}

Theorem \ref{thm25} is the most fundamental 
result on lc-trivial fibrations. 
It is the main ingredient of this paper.  
Ambro \cite{ambro2} obtained Theorem \ref{thm25} 
for klt-trivial fibrations. Theorem \ref{thm25} is a direct generalization of 
\cite[Theorem 0.2]{ambro2}. 

\begin{thm}[{\cite[Theorem 3.6]{fujino-gongyo3}}]\label{thm25} 
Let $f:(X, \Delta)\to Y$ be an lc-trivial fibration and let $\pi:Y\to S$ 
be a proper morphism. 
Let $\mathbf B$ and $\mathbf M$ be the induced 
discriminant and moduli $\mathbb Q$-b-divisors of $f$. 
Then, 
\begin{itemize}
\item[(1)] $\mathbf K+\mathbf B$ is $\mathbb Q$-b-Cartier, 
\item[(2)] $\mathbf M$ is b-nef over $S$.  
\end{itemize}
\end{thm}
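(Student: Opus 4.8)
The plan is to reduce Theorem~\ref{thm25} to the fundamental results on klt-trivial fibrations that Ambro established in \cite{ambro2}, and whose generalization to the lc-trivial setting was carried out in \cite[Section~3]{fujino-gongyo3}. Since the statement is quoted directly from \cite[Theorem~3.6]{fujino-gongyo3}, the task is really to organize the Hodge-theoretic machinery that underlies it. First I would treat part~(1) and part~(2) somewhat separately, since the $\mathbb{Q}$-b-Cartier property of $\mathbf{K}+\mathbf{B}$ is of a different nature from the nefness of $\mathbf{M}$.

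For part~(1), I would argue that after passing to a suitable higher birational model $Y'\to Y$ the discriminant divisor $B_{Y'}$ stabilizes. Concretely, the coefficients of $B_{Y'}$ are computed from the log canonical thresholds $b_P$ over prime divisors $P$, and by taking an embedded resolution of the pair $(X,\Delta)$ together with a log resolution of the morphism $f$, one arranges that $f$ becomes, over the complement of a simple normal crossing divisor, a nice fibration to which the definition of $b_P$ applies transparently. The key point is that there exists a single model $Y'$ on which the b-divisor $\mathbf{B}$ descends, i.e.\ $\mathbf{B}=\overline{\mathbf{B}_{Y'}}$; once this is known, $\mathbf{K}+\mathbf{B}=\overline{K_{Y'}+B_{Y'}}$ is $\mathbb{Q}$-b-Cartier by definition (see \ref{say2}). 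This descent follows from the boundedness of the singularities of the generic fiber, which in turn rests on the sub log canonicity hypothesis (i) in Definition~\ref{def22}.

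For part~(2), which is the substantive assertion, the strategy is to realize $M_{Y'}$ on a good model as the curvature, or rather the first Chern class, of a Hodge-theoretic object attached to the variation of Hodge structure coming from the fibers of $f$. After base change and birational modification (using the functoriality recorded in \ref{say23} and \ref{say28}) one reduces to the situation where $f$ is semistable in codimension one and $\mathbf{M}$ descends to $Y'$. The moduli part $M_{Y'}$ is then identified, up to $\mathbb{Q}$-linear equivalence, with a determinant line bundle of the lowest-degree piece of the Hodge filtration on the relevant local system. The semipositivity theorem of \cite{fujino2} (in the refined Hodge-theoretic form of \cite{fujino-fujisawa} and \cite{ffs}) then yields that this piece is nef over $S$, hence $M_{Y'}$ is nef relative to $Y'\to S$, which is exactly the statement that $\mathbf{M}$ is b-nef over $S$. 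The passage from the klt-trivial case treated by Ambro to the genuinely lc-trivial case is handled, following \cite{fujino-gongyo3}, by a perturbation argument: one reduces the log canonical centers of coefficient one by an auxiliary boundary modification so that the relevant cohomology sheaf still carries a pure Hodge-theoretic structure.

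The hard part will be the descent and the semipositivity in part~(2). Guaranteeing that $\mathbf{M}=\overline{\mathbf{M}_{Y'}}$ on a \emph{fixed} model — rather than merely on ever-higher models — requires careful control of the behavior of the Hodge bundles under blow-ups of $Y$, and this is where the theory genuinely uses that the moduli part comes from a polarized variation of Hodge structure rather than an arbitrary divisor class. The cleanest route is to invoke \cite[Theorem~3.6]{fujino-gongyo3} directly, since the full Hodge-theoretic input is assembled there; but to make the argument self-contained one would need to reproduce the construction of the canonical extension of the Hodge filtration and verify its compatibility with the birational pullbacks that define the b-divisor $\mathbf{M}$.
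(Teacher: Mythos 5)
Your proposal takes essentially the same route as the paper: the paper gives no internal proof of Theorem~\ref{thm25}, importing it from \cite[Theorem 3.6]{fujino-gongyo3} and merely noting afterwards that the model $Y'$ on which $\mathbf K+\mathbf B$ and $\mathbf M$ descend is constructed by Ambro's arguments in \cite[Section 5]{ambro2}, which is precisely the descent-plus-semipositivity strategy you outline (and you yourself conclude that the cleanest route is the direct citation). One small correction of detail: in \cite{fujino-gongyo3} the passage from the klt-trivial to the lc-trivial case is not a perturbation of the boundary producing a pure Hodge structure, but a direct application of the semipositivity theorem of \cite{fujino2} for variations of \emph{mixed} Hodge structure (cf.~\cite{fujino-fujisawa}, \cite{ffs}), which is in fact the input you invoke correctly earlier in your sketch.
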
 

\begin{rem}
Theorem \ref{thm25} says that 
there is a proper birational morphism $Y'\to Y$ from a normal variety $Y'$ such that 
$\mathbf K+\mathbf B=\overline {K_{Y'}+B_{Y'}}$, 
$\mathbf M=\overline {M_{Y'}}$, and 
$M_{Y'}$ is nef over $S$. 
We note that the arguments in \cite[Section 5]{ambro2} 
show how to construct $Y'$. 
For the details, see \cite[p.~245, set-up]{ambro2} 
and \cite[Proof of Theorem 2.7]{ambro2}. 
\end{rem}

The following conjecture is one of the most important 
open problems on lc-trivial fibrations. 
It was conjectured by Fujita, Mori, Shokurov and others (see \cite[Problem]{nakayama}, 
\cite[Conjecture 7.13]{prokhorov-shokurov} and so on). 

\begin{conj}[b-semi-ampleness conjecture]\label{conj29} 
Let $f:(X, \Delta)\to Y$ be an lc-trivial fibration and 
let $\pi:Y\to S$ be a proper morphism. 
Then the moduli part $\mathbf M$ is b-semi-ample over $S$. 
\end{conj}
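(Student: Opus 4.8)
The plan is to upgrade the b-nefness of $\mathbf M$ furnished by Theorem \ref{thm25} to b-semi-ampleness by exhibiting $\mathbf M$, on a suitable birational model and after a base change, as the pullback of an ample class from a moduli space of the fibers. I should say at the outset that this is a genuinely open problem (as the paper itself emphasizes), so what follows is a strategy rather than a completed argument, and the decisive step is precisely the one that is not known in general.

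First I would reduce to the cleanest possible situation. Using the induced lc-trivial fibrations of \ref{say23}, I can replace $Y$ by any convenient generically finite cover and $X$ by a resolution; since $\mathbf M$ is a b-divisor and $\sigma_* M_{Y'}=M_Y$, b-semi-ampleness is insensitive to such modifications, and over $S$ it suffices to produce a single model $Y'$ on which $M_{Y'}$ is semi-ample relative to $Y'\to S$. After base change I may assume the fibration is semistable in codimension one and that the generic fiber $(F,\Delta|_F)$ is a log Calabi--Yau pair, $K_F+\Delta|_F\sim_{\mathbb Q}0$. Passing to a klt-trivial fibration (Remark \ref{rem25}) by adjunction along the horizontal coefficient-one components, I would aim to reduce to a family of \emph{polarized} fibers carrying a genuine variation of Hodge structure.

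The heart of the matter is the Hodge-theoretic meaning of $\mathbf M$. By the construction underlying Theorem \ref{thm25}, namely the semipositivity theorem of \cite{fujino2} as applied in \cite[Section 3]{fujino-gongyo3}, on a good model $Y'$ the divisor $M_{Y'}$ represents the determinant of the relevant Hodge bundle of the polarized VHS attached to the family of fibers, and this is exactly what yields nefness. To get semi-ampleness I would study the period map $Y'\setminus\Sigma\to\Gamma\backslash\mathcal D$ (with $\Sigma$ the discriminant) and try to factor it through a \emph{quasi-projective} moduli space $\mathcal M$ of the polarized log Calabi--Yau fibers, carrying a natural ample CM/Hodge-theoretic $\mathbb Q$-line bundle $\lambda$, in such a way that $M_{Y'}$ becomes the pullback of $\lambda$ along a morphism $Y'\to\overline{\mathcal M}$ to a projective compactification. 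Semi-ampleness of $\mathbf M$ would then be immediate from ampleness of $\lambda$.

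The main obstacle is this last factorization, and it is where the conjecture genuinely stands open. It requires (i) boundedness and separatedness for the moduli of the (s)lc Calabi--Yau pairs occurring as fibers, (ii) a projective compactification $\overline{\mathcal M}$ with an ample polarizing line bundle, and (iii) the descent of the Hodge line bundle from $Y'$ to $\overline{\mathcal M}$ with the identification of $M_{Y'}$ as its pullback. In the special ranges where the fibers are curves, or where $\dim Y=1$, one can bypass the general moduli theory: for relative dimension one the explicit canonical bundle formula realizes $M_{Y'}$ as a pullback from the modular curve, while for $\dim Y=1$ one only needs a degree-zero moduli part to be torsion, which follows because the associated flat bundle is unitary with finite monodromy after a finite base change. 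Outside these ranges, the projectivity and positivity of the moduli space of higher-dimensional Calabi--Yau pairs is exactly the missing ingredient, and I do not expect to circumvent it by purely birational methods.
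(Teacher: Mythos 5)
You have not proved the statement, and you could not have: Conjecture \ref{conj29} is recorded in the paper itself as one of the most important \emph{open} problems on lc-trivial fibrations, solved only in special cases (\cite{kawamata-sub}, \cite{fujino-nagoya}, \cite[Section 8]{prokhorov-shokurov}), so there is no proof in the paper against which to match your argument. Your submission is, as you candidly say, a program, and the gap you locate is exactly the right one: upgrading the Hodge-theoretic b-nefness of Theorem \ref{thm25} to b-semi-ampleness by factoring the period map through a quasi-projective moduli space of the polarized log Calabi--Yau fibers carrying an ample CM/Hodge line bundle, together with the identification of $M_{Y'}$ as the pullback of that bundle, is precisely the missing ingredient. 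This matches the paper's own diagnosis: all known cases proceed via moduli of curves, $K3$ surfaces, and abelian varieties, and the remark following Conjecture \ref{conj44} attributes the extra difficulty of the K\"ahler version to the absence of a good moduli theory. So the verdict is that there is a genuine (and currently unavoidable) gap at your steps (i)--(iii), and the proposal should be read as a correct statement of strategy, not a proof.

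Two specific cautions on the parts you present as routine. First, your preliminary reductions are not free: the passage from a general lc-trivial fibration to a klt-trivial fibration with effective boundary, which you invoke via adjunction along the coefficient-one horizontal components (cf.\ Remark \ref{rem25}), is carried out in the paper's Observation \ref{214} only \emph{assuming} the minimal model program and abundance (or at least the existence of a good minimal model of the generic fiber), and the compactification step of Observation \ref{215} rests on \cite[Theorem 1.4]{birkar}; your ``cleanest possible situation'' is therefore itself conditional, beyond the harmless extra hypothesis $\kappa(X_\eta, K_{X_\eta}+\Delta^+|_{X_\eta})=0$ of Observation \ref{213}. Second, your description of the special ranges overstates what is unconditional: the paper's Theorem \ref{thm216} gives, under the $\kappa=0$ and good-minimal-model hypotheses, that $\mathbf M$ is b-nef and abundant, and b-semi-ample only when $\dim X=\dim Y+1$ (via \cite[Theorem 8.1]{prokhorov-shokurov}). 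For $\dim Y=1$ your claim that a degree-zero moduli part is torsion ``because the associated flat bundle is unitary with finite monodromy after a finite base change'' is not automatic for lc-trivial fibrations; in the klt-trivial setting the conclusion does follow from b-nef-and-abundance (\cite[Theorem 3.3]{ambro3}, \cite{fujino-gongyo3}), since a nef and abundant degree-zero $\mathbb Q$-divisor on a curve is $\mathbb Q$-linearly trivial, but in the lc setting this again depends on the same conditional reductions. In short: your analysis of where the obstruction sits is accurate and consistent with Observations \ref{213}--\ref{215}, but Conjecture \ref{conj29} remains open after your argument, exactly as before it.
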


Conjecture \ref{conj29} was only solved for some special cases 
(see \cite{kawamata-sub}, \cite{fujino-nagoya}, and \cite[Section 8]{prokhorov-shokurov}). 
The arguments in \cite{kawamata-sub} (see also 
\cite{prokhorov-shokurov}) and \cite{fujino-nagoya} 
use the theory of moduli spaces of curves, $K3$ surfaces, and Abelian 
varieties. 

We give some observations on Conjecture \ref{conj29}. 

\begin{say}[Observation I]\label{213}
Let $f:(X, \Delta)\to Y$ be an lc-trivial fibration. 
For simplicity, we assume that $X$ is smooth and $\Supp \Delta$ 
is a simple normal crossing divisor on $X$. 
We write 
$$
\Delta=\Delta^+-\Delta^-
$$ 
where $\Delta^+$ and $\Delta^-$ are effective 
$\mathbb Q$-divisors on $X$ such that 
$\Delta^+$ and $\Delta^-$ have no common irreducible components. 
In this situation, we have 
$$
\mathcal O_X(\lceil \mathbf A^*(X, \Delta)\rceil)\simeq \mathcal O_X(\lceil \Delta^-
\rceil)  
$$ 
over the generic point of $Y$ (see \cite[Lemma 3.22]{fujino6.5}). 
Therefore, the condition 
$$
\rank f_*\mathcal O_X(\lceil \mathbf A^*(X, \Delta)\rceil)=1
$$ 
is equivalent to 
$$
\rank f_*\mathcal O_X(\lceil \Delta^-\rceil)=1. 
$$ 
For Conjecture \ref{conj29}, it seems to be reasonable 
to assume that 
$$
\rank f_*\mathcal O_X(\lceil m\Delta^-\rceil)=1
$$ 
holds for every nonnegative integer $m$. 
This condition is equivalent to 
$$
\kappa (X_\eta, K_{X_\eta}+\Delta^+|_{X_{\eta}})=0
$$ 
where 
$X_\eta$ is the generic fiber of $f:X\to Y$. 
The condition 
$$
\rank f_*\mathcal O_X(\lceil \Delta^-\rceil)=1
$$ 
seems to be insufficient for Conjecture \ref{conj29}. 

If there are an lc-trivial fibration $f^\dag:(X^\dag, \Delta^\dag)\to Y$ such that 
$(X^\dag, \Delta^\dag)$ is log canonical and a proper birational morphism 
$\mu:X\to X^\dag$ such that 
$K_X+\Delta=\mu^*(K_{X^{\dag}}+\Delta^{\dag})$ and $f=f^\dag\circ \mu$, 
then $\Delta^-$ is $\mu$-exceptional. 
Therefore we have 
$$
\mu_*\mathcal O_X(\lceil m\Delta^-\rceil)\simeq \mathcal O_{X^\dag}
$$
for every nonnegative integer $m$. 
This implies 
$$
f_*\mathcal O_X(\lceil m\Delta^-\rceil)\simeq \mathcal O_Y
$$ 
for every nonnegative integer $m$. 
Consequently, this extra assumption that 
$$
\rank f_*\mathcal O_X(\lceil m\Delta^-\rceil)=1
$$ 
for every nonnegative integer $m$ is harmless for many applications. 
\end{say}

\begin{say}[Observation II]\label{214} 
Assume that the minimal model program and the abundance conjecture hold. 

Let $f:(X, \Delta)\to Y$ be an lc-trivial fibration such that $X$ is smooth and 
$\Supp \Delta$ is a simple normal 
crossing divisor on $X$. 
Assume that 
$$
\kappa (X_\eta, K_{X_\eta}+\Delta^+|_{X_\eta})=0
$$ 
as in \ref{213}. 
By \cite{abramovich-karu}, we can construct the following commutative diagram: 
$$
\xymatrix{
   X \ar[d]_{f} & X'\ar[l]_{\mu}\ar[d]^{f'} &\ar@{_{(}->}[l]U_{X'}\ar[d]\\
   Y  & \ar[l]^{\sigma}Y'& \ar@{_{(}->}[l] U_{Y'},
} 
$$
satisfying: 
\begin{itemize}
\item[(1)] $\mu$ and $\sigma$ are projective birational morphisms. 
\item[(2)] $f':(U_{X'}\subset X')\to (U_{Y'}\subset Y')$ is a projective equidimensional toroidal 
morphism. 
\item[(3)] $K_{X'}+\Delta_{X'}=\mu^*(K_X+\Delta)$, $\Supp \Delta_{X'}\subset \Sigma_{X'}
=X'\setminus U_{X'}$, and $X'$ is $\mathbb Q$-factorial. 
\item[(4)] $Y'$ is a smooth quasi-projective variety and $\Sigma_{Y'}=Y'\setminus U_{Y'}$ 
is a simple normal crossing divisor on $Y'$. 
\item[(5)] $f'$ is smooth over $U_{Y'}$ and $\Sigma_{X'}$ is a relatively normal crossing divisor over $U_{Y'}$. 
\end{itemize}
We can write 
$$
K_{X'}+\Delta_{X'}\sim _{\mathbb Q}f'^*(K_{Y'}+B_{Y'}+M_{Y'}) 
$$ 
as in \ref{say28}. 
Let $\Sigma_{Y'}=\sum _i P_i$ be the irreducible decomposition. 
Then we can write 
$$
B_{Y'}=\sum _i (1-b_{P_i})P_i
$$ 
as in \ref{say28}. 
We put 
$$
\Delta_{X'}+\sum _i b_{P_i}f'^*P_i=\Theta -E
$$ 
where $\Theta$ and $E$ are effective $\mathbb Q$-divisors on $X'$ such that 
$\Theta$ and $E$ have no common irreducible components. 
Then 
$$
K_{X'}+\Theta\sim _{\mathbb Q} f'^*(K_{Y'}+\Sigma _{Y'}+M_{Y'})
+E\sim _{\mathbb Q, f'}E\geq 0. 
$$
We run the minimal model program with respect to $K_{X'}+\Theta$ over $Y'$ (cf.~\cite[Proof of 
Theorem 1.1]{fujino-gongyo3}). 
Note that $(X', \Theta)$ is a $\mathbb Q$-factorial log canonical pair. 
Then we obtain a minimal model $\widetilde f: (\widetilde X, \widetilde \Theta)\to Y'$ such that 
$$
K_{\widetilde X}+\widetilde \Theta\sim _{\mathbb Q, \widetilde f}0. 
$$
It is easy to see that  
$$
K_{\widetilde X}+\widetilde \Theta\sim _{\mathbb Q}\widetilde f^*(K_{Y'}+\Sigma_{Y'}+M_{Y'}), 
$$ 
that is, $\Sigma_{Y'}$ is the discriminant $\mathbb Q$-divisor of $\widetilde f:(\widetilde X, \widetilde \Theta)\to Y'$ and 
$M_{Y'}$ is the moduli part of $\widetilde f:(\widetilde X, \widetilde \Theta)\to Y'$. 
Therefore, if we assume that the minimal model program and the abundance conjecture hold, 
then we can replace $(X, \Delta)$ with 
a log canonical pair $(\widetilde X, \widetilde \Theta)$ 
when we prove the b-semi-ampleness of $\mathbf M$ under the assumption that 
$\kappa (X_{\eta}, K_{X_{\eta}}+\Delta^+|_{X_{\eta}})=0$. 
We note that the b-semi-ampleness conjecture of $\mathbf M$ for $\widetilde f: (\widetilde X, \widetilde \Theta)\to Y'$ 
can be reduced to the case when $g:(V, \Delta_V)\to W$ is an lc-trivial fibration such that 
$(V, \Delta_V)$ is kawamata log terminal over 
the generic point of $W$ and $\Delta_V$ is effective. 
For the details, see \cite[Proof of Theorem 1.1]{fujino-gongyo3}.  

We also note that the existence of a good minimal model of $(X'_{\eta}, \Theta|_{X'_\eta})$, 
where $X'_\eta$ is the generic fiber of $f':X'\to Y'$, is sufficient 
to construct a relative good minimal model 
$\widetilde f:(\widetilde X, \widetilde \Theta)\to Y'$. 
Let us go into details. 
By replacing $(X', \Theta)$ with its dlt blow-up, we may assume that 
$(X', \Theta)$ is a $\mathbb Q$-factorial divisorial log terminal pair. 
We run the minimal model program on $K_{X'}+\Theta$ with ample scaling over $Y'$. 
After finitely many steps, all the horizontal components of $E$ are contracted if 
$(X'_\eta, \Theta|_{X'_{\eta}})$ has a good minimal model. 
Thus we assume that $E$ has no horizontal components. 
Then it is easy to see that $E$ is very exceptional over $Y'$. 
For the definition of {\em{very exceptional divisors}}, see, for example, 
\cite[Definition 3.1]{birkar}. 
Therefore, by \cite[Theorem 3.4]{birkar}, we obtain a relative 
minimal model $\widetilde f: (\widetilde X, \widetilde \Theta)\to Y'$ with 
$K_{\widetilde X}+\widetilde \Theta\sim _{\mathbb Q, \widetilde f}0$. 
Note that the existence of a good minimal model of $(X'_\eta, \Theta|_{X'_\eta})$ is equivalent to 
the existence of a good minimal model of 
$(X_\eta, \Delta^+|_{X_\eta})$. 
\end{say}

\begin{say}[Observation III]\label{215} 
Let $f:(X, \Delta)\to Y$ be an lc-trivial 
fibration such that $X$ and $Y$ are quasi-projective and that 
$(X, \Delta)$ is log canonical. 
By taking a dlt blow-up, we may assume that $(X, \Delta)$ is a $\mathbb Q$-factorial divisorial log terminal 
pair. 
Let $\overline Y$ be a normal projective variety which is a compactification of $Y$. 
By using the minimal model program, we can construct a projective $\mathbb Q$-factorial 
divisorial log terminal pair $(\overline X, \overline \Delta)$ which is a compactification of 
$(X, \Delta)$ such that $\overline X\setminus X$ contains no log canonical centers of 
$(\overline X, \overline \Delta)$ and that $f:X\to Y$ is extended to $\overline f: \overline X\to \overline Y$. 
$$
\xymatrix{
   (\overline X, \overline \Delta) \ar[d]_{\overline f} &\ar@{_{(}->}[l](X, \Delta)\ar[d]^f\\
   \overline Y  & \ar@{_{(}->}[l] Y
} 
$$
By \cite[Theorem 1.4]{birkar}, we have a good minimal model $(\overline X', \overline \Delta')$ over $\overline Y$.  
See also \cite[Theorem 1.1 and Corollary 1.2]{hacon-xu}. 
Let $\overline f': \overline X'\to \overline Y'$ be the contraction morphism over $\overline Y$ associated to 
$K_{\overline X'}+\overline \Delta'$. Then $\overline f': (\overline X', \overline \Delta')\to \overline Y'$ is an lc-trivial 
fibration which is a compactification of $f:(X, \Delta)\to Y$. 

Therefore, the b-semi-ampleness of $\mathbf M$ of $\overline f': (\overline X', \overline \Delta')\to \overline Y'$ 
implies that the moduli part of $f:(X, \Delta)\to Y$ is b-semi-ample over $S$, where $Y\to S$ is 
a proper morphism as in Conjecture \ref{conj29}. 
\end{say} 

By combining the above observations with the results in \cite[Theorem 3.3]{ambro3} 
and 
\cite[Theorem 8.1]{prokhorov-shokurov}, we have: 

\begin{thm}\label{thm216} 
Let $f:(X, \Delta)\to Y$ be an lc-trivial fibration such that 
$X$ is smooth and $\Supp \Delta$ is a simple normal crossing divisor 
on $X$ and let $Y\to S$ be a proper morphism. 
We write $\Delta=\Delta^+-\Delta^-$ where 
$\Delta^+$ and $\Delta^-$ are effective $\mathbb Q$-divisors and 
have no common irreducible components. 
Assume that 
$
\kappa (X_\eta, K_{X_\eta}+\Delta^+|_{X_\eta})=0 
$ 
where $X_\eta$ is the generic fiber of $f$ and 
that  $(X_\eta, \Delta^+|_{X_\eta})$ has a good minimal model. 
Then the moduli part $\mathbf M$ of $f:(X, \Delta)\to Y$ is b-nef and 
abundant over $S$. This means that there is a proper 
birational morphism $Y'\to Y$ from a normal variety $Y'$ such that 
$\mathbf M=\overline {\mathbf M_{Y'}}$ and $\mathbf M_{Y'}$ is 
nef and abundant relative to the induced morphism 
$Y'\to S$. 

We further assume that 
$\dim X=\dim Y+1$. 
Then the moduli part $\mathbf M$ of $f:(X, \Delta)\to Y$ is b-semi-ample 
over $S$. 
\end{thm}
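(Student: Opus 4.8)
The plan is to reduce, by means of the reductions assembled in Observations \ref{213}--\ref{215}, to an lc-trivial fibration with especially good structure, and then to read off the conclusions from the cited theorems of Ambro and of Prokhorov--Shokurov; all of the genuine positivity input is supplied by those two results.

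First I would invoke Observation \ref{214}. The hypothesis that $(X_\eta, \Delta^+|_{X_\eta})$ has a good minimal model is exactly what is needed for the relative minimal model program on $K_{X'}+\Theta$ over $Y'$ to terminate with a relative good minimal model $\widetilde f\colon (\widetilde X, \widetilde\Theta)\to Y'$ satisfying $K_{\widetilde X}+\widetilde\Theta\sim_{\mathbb Q, \widetilde f}0$, where $(\widetilde X, \widetilde\Theta)$ is log canonical. As recorded there, $\Sigma_{Y'}$ is the discriminant and $M_{Y'}$ the moduli part of $\widetilde f$, so the moduli b-divisor $\mathbf M$ is literally unchanged by this process. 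Since $\sigma\colon Y'\to Y$ is birational, and since b-nefness, b-abundance and b-semi-ampleness are properties of the b-divisor $\mathbf M$ rather than of any chosen birational model, it suffices to prove the two assertions for $\widetilde f$. The relative dimension $\dim X-\dim Y$ and the vanishing $\kappa(X_\eta,K_{X_\eta}+\Delta^+|_{X_\eta})=0$ survive this reduction, because $\mu$ and $\sigma$ are birational and the generic fiber is unchanged up to birational equivalence.

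For the first assertion I would then pass, by the standard device of \cite[Proof of Theorem 1.1]{fujino-gongyo3}, to a klt-trivial fibration whose generic fiber still has $\kappa=0$, and apply Ambro's Theorem 3.3 in \cite{ambro3}, which gives that the moduli part of such a fibration is b-nef and abundant over the base; this yields the b-nef and abundant statement. For the final statement I would add the hypothesis $\dim X=\dim Y+1$, so that the reduced fibration has one-dimensional fibers, placing us in the relative-dimension-one case to which \cite[Theorem 8.1]{prokhorov-shokurov} applies; that theorem gives b-semi-ampleness of the moduli part over the base. Transporting the conclusion back through the birational reduction---again using that b-semi-ampleness depends only on $\mathbf M$---shows that $\mathbf M$ is b-semi-ample over $S$.

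The main obstacle, and the one step I must justify rather than cite, is that the reduction of Observation \ref{214} both preserves the moduli b-divisor $\mathbf M$ and is legitimate under the good-minimal-model hypothesis alone, not under full minimal model program and abundance. The delicate point is the contraction of the horizontal components of $E$: this is precisely where a good minimal model of $(X'_\eta,\Theta|_{X'_\eta})$ is needed, and by the equivalence noted in Observation \ref{214} this is the good minimal model of $(X_\eta,\Delta^+|_{X_\eta})$ granted by hypothesis. Once $E$ has only vertical components it is very exceptional over $Y'$, so \cite[Theorem 3.4]{birkar} produces the relative model $\widetilde f$ with $K_{\widetilde X}+\widetilde\Theta\sim_{\mathbb Q,\widetilde f}0$. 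After these verifications the theorems of Ambro and of Prokhorov--Shokurov carry the entire analytic content of the proof.
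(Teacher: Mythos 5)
Your overall strategy coincides with the paper's: you reduce via Observation \ref{214} to a log canonical lc-trivial fibration $\widetilde f\colon (\widetilde X, \widetilde\Theta)\to Y'$ with the same moduli part, and you correctly identify that the good-minimal-model hypothesis on $(X_\eta, \Delta^+|_{X_\eta})$ is exactly what makes this reduction work without assuming the full minimal model program and abundance (contraction of the horizontal components of $E$, very exceptionality of the vertical remainder, and \cite[Theorem 3.4]{birkar}). The paper's sketch does precisely this for its first step, and then, as you do, quotes the Ambro-type result for b-nef and abundance (the paper cites \cite[Theorem 1.1]{fujino-gongyo3}, which packages Ambro's \cite[Theorem 3.3]{ambro3} together with the klt-trivial reduction you describe) and \cite[Theorem 8.1]{prokhorov-shokurov} in relative dimension one.

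There is, however, a genuine gap: you stop the reduction at the quasi-projective stage. The output of Observation \ref{214} is a fibration over the quasi-projective variety $Y'$ produced by weak semistable reduction, whereas the results you then invoke --- \cite[Theorem 3.3]{ambro3}, \cite[Theorem 1.1]{fujino-gongyo3}, and \cite[Theorem 8.1]{prokhorov-shokurov} --- are theorems about projective varieties; their proofs (Hodge-theoretic semipositivity, covering tricks, and, in relative dimension one, the theory of moduli of curves) require completeness. This is why the paper's proof inserts Observation \ref{215} between your two steps: using \cite[Theorem 1.4]{birkar} and \cite[Theorem 1.1 and Corollary 1.2]{hacon-xu}, one compactifies to a projective lc-trivial fibration $\overline f'\colon (\overline X', \overline \Delta')\to \overline Y'$ extending $f$, and the b-positivity of its moduli part restricts to the desired conclusion over $S$. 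This compactification is not formal --- it requires running an MMP so that no log canonical centers appear at infinity and so that the compactified morphism is again an lc-trivial fibration --- and it cannot be absorbed into your remark that b-nefness, b-abundance, and b-semi-ampleness depend only on $\mathbf M$: that remark covers birational changes of the base, not the passage from $Y'$ to a compactification. With Observation \ref{215} inserted after your Observation \ref{214} step, your argument becomes the paper's proof.
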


We note that we do not use Theorem \ref{thm216} in the subsequent sections. 

\begin{proof}[Sketch of Proof of Theorem \ref{thm216}] 
By the arguments in \ref{214}, we may assume that 
$X$ and $Y$ are quasi-projective and that  
$(X, \Delta)$ is log canonical. 
By the arguments in \ref{215}, we may further assume that 
$X$ and $Y$ are projective. 
Then, by \cite[Theorem 1.1]{fujino-gongyo3}, we obtain that 
$\mathbf M$ is b-nef and abundant over $S$. 
When $\dim X=\dim Y+1$, we see that $\mathbf M$ is b-semi-ample over 
$S$ 
by \cite[Theorem 8.1]{prokhorov-shokurov}. 
\end{proof}

For the details of lc-trivial fibrations, see also \cite{ambro2} 
and \cite[Section 3]{fujino-gongyo3}. 

\section{Proof of the main results}\label{sec2} 
 
First, let us prove the log canonicity of $Y$ in Case \ref{case3} in the introduction 
by using 
Theorem \ref{thm11}. 

\begin{proof}[Proof of the log canonicity of $Y$ in 
Case \ref{case3}] 
It is easy to see that $Y$ is $\mathbb Q$-factorial (see, 
for example, \cite[Proposition 3.36]{kollar-mori}). 
By perturbing $\Delta$, we may assume that 
$\Delta$ is a $\mathbb Q$-divisor. 
By shrinking $Y$, we may assume that $Y$ is affine. 
We can take an effective $\mathbb Q$-divisor 
$\Delta'$ on $X$ such that 
$(X, \Delta+\Delta')$ is log canonical and that 
$$
K_X+\Delta+\Delta'\sim _{\mathbb Q, f}0. 
$$ 
Let $g:Z\to X$ be a resolution such that 
$$
K_Z+\Delta_Z=g^*(K_X+\Delta+\Delta') 
$$ 
and that $\Supp \Delta_Z$ is a simple normal crossing divisor on $Z$. 
Then 
$$
g_*\mathcal O_Z(\lceil -\Delta^{<1}_Z\rceil)\simeq \mathcal O_X. 
$$ 
Therefore, 
$$
h_*\mathcal O_Z(\lceil -\Delta^{<1}_Z\rceil)\simeq \mathcal O_Y
$$ 
and 
$$
K_Z+\Delta_Z\sim _{\mathbb Q, h}0
$$ 
where $h=f\circ g$. By Theorem \ref{thm11}, 
$Y$ has only log canonical singularities. 

If every log canonical center of $(X, \Delta)$ is dominant onto $Y$, 
then we can take $\Delta'$ such that every log canonical center of $(Z, \Delta_Z)$ 
is dominant onto $Y$. 
Thus $Y$ is log terminal by 
Theorem \ref{thm11} when every log canonical center of $(X, \Delta)$ is dominant 
onto $Y$. 
\end{proof}

Let us prove Theorem \ref{thm11}. We use the framework of lc-trivial fibrations.

\begin{proof}[Proof of Theorem \ref{thm11}] 
Without loss of generality, we may assume that $Y$ is affine. 
We can write 
$$
K_X+\Delta\sim _{\mathbb Q}f^*(K_Y+B_Y+M_Y)
$$ 
where $B_Y$ is the discriminant and 
$M_Y$ is the moduli part of the lc-trivial fibration $f:(X, \Delta)\to Y$ (see 
\ref{say28}). 
Note that $B_Y$ is effective (see, 
for example, the proof of \cite[Theorem 1.2]{fujino1}) 
and the coefficients of $B_Y$ are $\leq 1$.  
Let $E$ be an arbitrary prime divisor over $Y$. 
We take a resolution $\sigma:Y'\to Y$ with 
$$
K_{Y'}+B_{Y'}+M_{Y'}=\sigma^*(K_Y+B_Y+M_Y)
$$ 
such that $E$ is a prime divisor on $Y'$ and 
that $E\cup \Supp B_{Y'}\cup \Exc(\sigma)$ is a simple 
normal crossing divisor on $Y'$. 
Note that $f':(X', \Delta')\to Y'$ is 
an induced lc-trivial fibration by $\sigma:Y'\to Y$ (see \ref{say23}) 
and that $B_{Y'}$ is the discriminant and $M_{Y'}$ is the moduli part of 
$f': (X', \Delta')\to Y'$.  
$$
\xymatrix{
(X', \Delta_{X'})\ar[r]\ar[d]_{f'}& (X, \Delta)\ar[d]^{f} \\
 Y'\ar[r]_{\sigma} & Y  
} 
$$
By taking $\sigma:Y'\to Y$ suitably, we may assume that $M_{Y'}$ is 
$\sigma$-nef (see Theorem \ref{thm25}) and there is 
an effective exceptional $\mathbb Q$-divisor $F$ on $Y'$ 
which is anti-$\sigma$-ample. 
Without loss of generality, we may assume that 
the coefficients of $F$ are $\leq 1$. 
Let $\varepsilon$ be an arbitrary positive rational number. 
Then 
\begin{align*}
K_{Y'}+B_{Y'}+M_{Y'}&=K_{Y'}+B_{Y'}+\varepsilon F+M_{Y'}-\varepsilon F\\
&\sim _{\mathbb Q}K_{Y'}+B_{Y'}+\varepsilon F+G
\end{align*}
where $G$ is a general effective $\mathbb Q$-divisor 
on $Y'$ such that $\lfloor G\rfloor=0$, $G\sim _{\mathbb Q}M_{Y'}-\varepsilon F$, 
and 
$\Supp B_{Y'}\cup \Supp F\cup \Supp G$ is a simple normal crossing 
divisor. Note that $M_{Y'}-\varepsilon F$ is $\sigma$-ample and that 
$Y$ is affine. 
We put 
$$
\Theta_{E, \varepsilon}=\sigma_*(B_{Y'}+\varepsilon F+G). 
$$ 
Then $\Theta_{E, \varepsilon}$ is an effective $\mathbb Q$-divisor 
on $Y$ whose coefficients are 
$\leq 1$ such that $K_Y+\Theta_{E, \varepsilon}$ is $\mathbb Q$-Cartier 
and 
\begin{align*}
a(E, Y, \Theta_{E, \varepsilon})&=-\mult _E B_{Y'}-\varepsilon \mult _E F\\
&\geq -1-\varepsilon. 
\end{align*}
Therefore, 
$$
a(E, Y, 0)\geq a(E, Y, \Theta_{E, \varepsilon})\geq -1-\varepsilon. 
$$ 
This means that $a(E, Y, 0)\geq -1$. 
Thus $Y$ has only log canonical singularities. 

When every log canonical center of $(X, \Delta)$ is dominant onto $Y$, $\mult 
_E B_{Y'}<1$ 
always holds by the construction of $B_{Y'}$. 
Therefore, we obtain $a(E, Y, 0)>-1$. 
Thus $Y$ has only log terminal singularities. 
\end{proof}

\begin{rem}\label{rem-new} 
In the proof of Theorem \ref{thm11}, if $M_{Y'}$ is $\sigma$-semi-ample and 
$Y$ is quasi-projective, then we can take a general effective $\mathbb Q$-divisor 
$G$ on $Y'$ such that 
$$
K_{Y'}+B_{Y'}+M_{Y'}\sim _{\mathbb Q, \sigma}K_{Y'}+B_{Y'}+G. 
$$ 
Thus $(Y, \Delta_Y)$ is log canonical where 
$\Delta_Y=\sigma_*(B_{Y'}+G)$. 
Therefore, the b-semi-ampleness of $\mathbf M$ is desirable (see Conjecture 
\ref{conj29}). 
Of course, if $M_{Y'}$ is semi-ample, then we can choose $G$ such that 
$$
K_{Y'}+B_{Y'}+M_{Y'}\sim _{\mathbb Q} K_{Y'}+B_{Y'}+G. 
$$
\end{rem}

Note that $Y$ in Theorem \ref{thm11} has a quasi-log structure 
in the sense of Ambro (see \cite{ambro1}). 

\begin{rem}[Quasi-log structure]
We use the same notation as in Theorem \ref{thm11}. 
We can write 
$$
K_X+\Delta\sim _{\mathbb Q} f^*\omega
$$ 
for some $\mathbb Q$-Cartier $\mathbb Q$-divisor $\omega$ on $Y$. 
Then the pair $[Y, \omega]$ has a quasi-log structure with only qlc singularities 
(see \cite{ambro1}, \cite{fujino3}, \cite{fujino4}, and \cite{fujino-foundation}). 
Therefore, the cone and contraction theorem holds for $Y$ with respect to $\omega$. 
It is a complete generalization of \cite[Theorem 4.1]{fujino1}. 
\end{rem}

\begin{proof}[Proof of Theorem \ref{thm-add}] 
By using Theorem \ref{thm25}, 
the proof of Theorem 1.2 in \cite{fujino1} works. 
We leave the details as an exercise for the reader. 
\end{proof}

Let us start the proof of Theorem \ref{thm13}. 

\begin{proof}[Proof of Theorem \ref{thm13}] 
By taking a suitable resolution, we may assume that 
$f:X\to Y$ is a morphism 
such that 
$$
m_0(K_X+\Delta)=f^*A+E
$$
where $m_0$ is a sufficiently large and divisible positive integer, 
$A$ is a very ample Cartier divisor 
on $Y$, and $E$ is an effective 
divisor on $X$ satisfying 
$$
|mm_0(K_X+\Delta)|=|mf^*A|+mE
$$ 
for every positive integer $m$ (see, for example, \cite[Lemma 3.2]{birkar-div}). 
Without loss of 
generality, we may further assume that 
$\Supp \Delta\cup \Supp E$ is a simple normal crossing 
divisor on $X$. 
We put 
$$
\Delta_X=\Delta-\frac{1}{m_0}E. 
$$ 
Then we have 
$$
K_X+\Delta_X\sim _{\mathbb Q, f}0. 
$$
It is easy to see that 
$f_*\mathcal O_X(\lceil -\Delta^{<1}_X\rceil)\simeq \mathcal O_Y$ 
(see, for example, the proof of \cite[Lemma 3.2]{birkar}). 
Note that 
$$
0\leq \lceil -\Delta^{<1}_X\rceil =\lceil \frac{1}{m_0}E\rceil \leq E. 
$$ 
Therefore, by Theorem \ref{thm11}, 
we have that $Y$ has only log canonical singularities. 
\end{proof}

\begin{rem}
In the proof of Theorem \ref{thm13}, we have $\kappa (X_\eta, K_{X_\eta}+\Delta|_{X_\eta})=0$ where 
$X_\eta$ is the generic fiber of $f:X\to Y$. 
Therefore, if Conjecture \ref{conj29} holds under the extra assumption that 
$$\kappa(X_\eta, K_{X_\eta}+\Delta^+_X|_{X_\eta})=\kappa (X_\eta, K_{X_\eta}+\Delta|_{X_\eta})=0$$ as in 
\ref{213}, then Conjecture \ref{conj12} also holds (see Proof of Theorem \ref{thm11} and Remark \ref{rem-new}). 
\end{rem}

\begin{rem}
If $(X, \Delta)$ has a good minimal model in Conjecture \ref{conj12}, 
then we may assume that 
there is a morphism $f:X\to Y$ such that $f_*\mathcal O_X\simeq \mathcal O_Y$ 
and $K_X+\Delta\sim _{\mathbb Q, f}0$ by replacing 
$(X, \Delta)$ with its good minimal model. 
In this case, Conjecture \ref{conj12} follows from Conjecture \ref{conj15}. 
\end{rem}

\begin{proof}[Proof of Theorem \ref{thm14}]
By combining the proof of Theorem \ref{thm13} with Theorem \ref{thm-add}, 
we can find an effective $\mathbb Q$-divisor $\Delta_Y$ on $Y$ such that 
$(Y, \Delta_Y)$ is kawamata log terminal. 
We leave the details as an exercise for the reader.  
\end{proof}

We give a remark on the finite generation of $R(X, \Delta)$. 

\begin{rem}[Finite generation of $R(X, \Delta)$]
Let $(X, \Delta)$ be a projective log canonical pair such that 
$\Delta$ is a $\mathbb Q$-divisor. 
It is conjectured that the log canonical ring $R(X, \Delta)$ is 
a finitely generated $\mathbb C$-algebra. It is one of the most 
important conjectures for higher-dimensional algebraic varieties. 
For the details and various related conjectures, see \cite{fujino-gongyo4}. 
It is known that $R(X, \Delta)$ is finitely generated for $\dim X\leq 4$ 
(see \cite[Theorem 1.2]{fujino-finite}). 
Note that $R(X, \Delta)$ is a finitely generated $\mathbb C$-algebra 
when $(X, \Delta)$ is kawamata log terminal and $\Delta$ is a 
$\mathbb Q$-divisor. It was established by 
Birkar--Cascini--Hacon--M\textsuperscript{c}Kernan (\cite{bchm}) 
and is now well known. 
\end{rem}

We close this section with remarks on Conjecture \ref{conj15}. 

\begin{rem} 
If $(X, \Delta)$ is kawamata log terminal and $\Delta$ is a $\mathbb Q$-divisor 
in Conjecture \ref{conj15}, 
then we can take a $\mathbb Q$-divisor $\Delta_Y$ such that 
$(Y, \Delta_Y)$ is kawamata log terminal and $K_X+\Delta\sim _{\mathbb Q}f^*(K_Y+\Delta_Y)$ 
by \cite[Theorem 0.2]{ambro2}, 
which is a complete solution of \cite[Problem 1.1]{fujino1}. 
Theorem 3.1 in \cite{fujino-gongyo1} generalizes \cite[Theorem 0.2]{ambro2} 
for $\mathbb R$-divisors. 
\end{rem}

\begin{rem}[cf.~the proof of Theorem 3.1 in \cite{fujino-gongyo1}]\label{rem24}
In Conjecture \ref{conj15}, we can write 
$$
K_X+\Delta=\sum _{i=1}^kr_i(K_X+\Delta_i)
$$ 
such that 
\begin{itemize}
\item[(a)] $\Delta_i$ is an effective $\mathbb Q$-divisor 
for every $i$, 
\item[(b)] $(X, \Delta_i)$ is log canonical and $K_X+\Delta_i$ is $f$-nef 
for every $i$, and 
\item[(c)] $0<r_i<1$, $r_i\in \mathbb R$ for 
every $i$, and $\sum _{i=1}^k r_i=1$. 
\end{itemize}
Since $K_X+\Delta$ is numerically $f$-trivial, so is $K_X+\Delta_i$ for 
every $i$. 
By \cite[Theorem 4.9]{fujino-gongyo2}, 
we obtain that 
$K_X+\Delta_i\sim _{\mathbb Q, f}0$ 
for every $i$. 
Therefore, we can reduce Conjecture \ref{conj15} to the case 
when $\Delta$ is a $\mathbb Q$-divisor 
with $K_X+\Delta\sim _{\mathbb Q, f}0$. 
Then we can use the framework of lc-trivial fibrations. 
We can easily check that Conjecture \ref{conj15} follows from 
Conjecture \ref{conj29} such that $S$ is a point (see also Remark \ref{rem-new}). 
\end{rem}

\section{Some analytic generalizations}\label{sec3}

In this section, we give some remarks on complex analytic varieties in Fujiki's class $\mathcal C$ and compact K\"ahler manifolds. 
The following theorem easily follows from \cite{bchm} and \cite{fujino-mori}. 
Note that Theorem \ref{thm31} is equivalent to Theorem \ref{thm18} 
by taking a resolution. 

\begin{thm}[{cf.~\cite{bchm} and \cite{fujino-mori}}]\label{thm31}  
Let $X$ be a normal complex analytic variety in Fujiki's class $\mathcal C$ and 
let $\Delta$ be an effective $\mathbb Q$-divisor 
on $X$ such that $(X, \Delta)$ is kawamata log terminal. 
Then the log canonical ring 
$$
R(X, \Delta)=\bigoplus _{m\geq 0}H^0(X, \mathcal O_X(\lfloor m(K_X+\Delta)\rfloor))
$$ 
is a finitely generated $\mathbb C$-algebra. 
\end{thm}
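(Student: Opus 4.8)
Theorem \ref{thm31} is the statement I need to prove.

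The plan is to reduce Theorem \ref{thm31} to the smooth compact K\"ahler case already recorded in Theorem \ref{thm18}. Since $X$ lies in Fujiki's class $\mathcal C$, it is bimeromorphic to a compact K\"ahler manifold; resolving the indeterminacy of such a bimeromorphic map by a sequence of blow-ups along smooth centers, and using that blowing up a compact K\"ahler manifold along a smooth center preserves the K\"ahler condition, I would first produce a log resolution $g\colon \widetilde X\to X$ of $(X, \Delta)$ whose source $\widetilde X$ is again a compact K\"ahler manifold. Thus the restriction to class $\mathcal C$ is exactly what lets me stay inside the K\"ahler world after resolving.

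Next I would transport the pair to $\widetilde X$. Writing $K_{\widetilde X}+\Delta_{\widetilde X}=g^*(K_X+\Delta)$, the pair $(\widetilde X, \Delta_{\widetilde X})$ is sub kawamata log terminal, so every coefficient of $\Delta_{\widetilde X}$ is $<1$. Decompose $\Delta_{\widetilde X}=\Delta^+-\Delta^-$ into its effective positive and negative parts with no common components; since $(X, \Delta)$ is kawamata log terminal and $\Delta$ is effective, $\Delta^-$ is an effective $g$-exceptional divisor. Because $\widetilde X$ is smooth and $\Supp \Delta_{\widetilde X}$ is a simple normal crossing divisor, the pair $(\widetilde X, \Delta^+)$ is kawamata log terminal with $\Delta^+$ effective, and $\widetilde X$ is a compact K\"ahler manifold, which is precisely the hypothesis of Theorem \ref{thm18}.

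It remains to identify the two log canonical rings. Passing to the Veronese subring attached to a positive integer $d$ with $d(K_X+\Delta)$ Cartier, I would use the projection formula together with the fact that an effective $g$-exceptional divisor $F$ satisfies $g_*\mathcal O_{\widetilde X}(F)\simeq \mathcal O_X$ (as $X$ is normal) to obtain
$$
g_*\mathcal O_{\widetilde X}(\lfloor m(K_{\widetilde X}+\Delta^+)\rfloor)\simeq \mathcal O_X(\lfloor m(K_X+\Delta)\rfloor)
$$
for every $m$ divisible by $d$, the correction term $\lfloor m\Delta^-\rfloor$ being absorbed harmlessly because it is effective and $g$-exceptional. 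Hence $R(X, \Delta)^{(d)}\simeq R(\widetilde X, \Delta^+)^{(d)}$, and since finite generation of a section ring is equivalent to that of any of its Veronese subrings, the finite generation of $R(\widetilde X, \Delta^+)$ furnished by Theorem \ref{thm18} yields the finite generation of $R(X, \Delta)$.

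The genuinely hard content is entirely packaged in Theorem \ref{thm18}, whose proof takes the Iitaka fibration to reduce to the projective situation and then invokes \cite{bchm}, the K\"ahler minimal model and abundance input coming from \cite{fujino-mori}. Within the present reduction the only step demanding real care is the first one: one must know that a normal $X\in\mathcal C$ admits a K\"ahler resolution, that is, that the class $\mathcal C$ condition is compatible with the blow-ups used to build a log resolution. I expect this to be the main obstacle to write cleanly, although it is standard once one recalls the stability of Fujiki's class $\mathcal C$ under proper modifications.
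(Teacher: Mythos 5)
Your proposal is correct and follows essentially the paper's own route: the paper states explicitly that Theorem \ref{thm31} is equivalent to Theorem \ref{thm18} by taking a resolution, and your reduction (a K\"ahler log resolution $g:\widetilde X\to X$ of the class $\mathcal C$ variety, the decomposition $\Delta_{\widetilde X}=\Delta^+-\Delta^-$ with $\Delta^-$ effective and $g$-exceptional, and the identification of Veronese subrings via $g_*\mathcal O_{\widetilde X}(\lfloor m\Delta^-\rfloor)\simeq \mathcal O_X$) is a careful writeup of exactly that equivalence. The only caveat is that within this paper Theorem \ref{thm18} is not proved independently of Theorem \ref{thm31} --- its content is the Iitaka-fibration/canonical-bundle-formula/BCHM argument given as the proof of Theorem \ref{thm31} in Section \ref{sec3}, which crucially needs the analytic semipositivity statement Theorem \ref{thm43} because the fibers are only K\"ahler --- so your appeal to Theorem \ref{thm18} should be read as a deferral to that same argument rather than as an independent shortcut.
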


As a special case of Theorem \ref{thm31}, we have: 

\begin{cor}\label{cor32} 
Let $X$ be a compact K\"ahler manifold. 
Then the canonical ring 
$$
R(X)=\bigoplus _{m \geq 0} H^0(X, \omega_X^{\otimes m})
$$ 
is a finitely generated $\mathbb C$-algebra. 
\end{cor}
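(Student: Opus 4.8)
The plan is to obtain Corollary \ref{cor32} as the special case $\Delta = 0$ of Theorem \ref{thm31}, so the entire task reduces to checking that the pair $(X, 0)$ meets the hypotheses of that theorem and that the two rings in question coincide.

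First I would verify the hypotheses. A compact K\"ahler manifold is bimeromorphic to itself, hence trivially lies in Fujiki's class $\mathcal{C}$; being smooth, it is in particular a normal complex analytic variety, and $\Delta = 0$ is an effective $\mathbb{Q}$-divisor. The only genuine point to check is that $(X, 0)$ is kawamata log terminal. This is immediate from smoothness: for any proper bimeromorphic morphism $g \colon Z \to X$ from a normal variety $Z$ one writes $K_Z = g^*K_X + \sum_i a_i E_i$ with every exceptional discrepancy $a_i > 0$ (indeed $(X, 0)$ is terminal when $X$ is smooth), so the divisor $\Delta_Z$ determined by $K_Z + \Delta_Z = g^*(K_X + 0)$ equals $-\sum_i a_i E_i$ and has all coefficients $< 1$; together with the effectivity of $0$ this yields the klt property.

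Next I would identify the rings and conclude. Since $\Delta = 0$ we have $\lfloor m(K_X + \Delta)\rfloor = mK_X$ for every $m \geq 0$, and $\mathcal{O}_X(mK_X) \cong \omega_X^{\otimes m}$ because $X$ is smooth; hence $R(X) = R(X, 0)$ term by term. Applying Theorem \ref{thm31} to $(X, 0)$ then shows that $R(X, 0)$, and therefore $R(X)$, is a finitely generated $\mathbb{C}$-algebra. I expect no real obstacle at the level of this corollary: all of the substance sits inside Theorem \ref{thm31}, which itself rests on \cite{bchm} and \cite{fujino-mori} and on the reduction to the projective case via the Iitaka fibration. The closest thing to a subtlety is merely recording that compact K\"ahler manifolds belong to class $\mathcal{C}$ and that a smooth variety with empty boundary is automatically klt, both of which are standard.
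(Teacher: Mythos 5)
Your proposal is correct and matches the paper exactly: Corollary \ref{cor32} is stated there simply as the special case $\Delta=0$ of Theorem \ref{thm31}, with the same routine verifications you record (a compact K\"ahler manifold lies in Fujiki's class $\mathcal C$, a smooth variety with empty boundary is klt --- indeed terminal --- and $R(X,0)=R(X)$). All the substance sits in Theorem \ref{thm31}, just as you say.
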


Theorem \ref{thm31} and Corollary \ref{cor32} do not hold for 
varieties which are not in Fujiki's class $\mathcal C$ (see 
Example \ref{ex-wilson} below). 

Note that the proof of Theorem \ref{thm31} is not related to the minimal 
model theory for compact K\"ahler manifolds. 
We do not discuss the minimal models for compact K\"ahler 
manifolds here (see \cite{chp}, \cite{horing-peternell}, and \cite{horing-peternell2}). 

\begin{say}[Ideas]\label{say-ideas}
Let $m$ be a large and divisible positive integer and 
let $$\Phi_{|m(K_X+\Delta)|}: X\dashrightarrow Y$$ be the Iitaka fibration. 
Then $Y$ is {\em{projective}} even when $X$ is only a complex analytic variety. 
By taking suitable resolutions, it is sufficient to 
treat the case where $\Phi:X\to Y$ is a proper surjective morphism 
from a compact K\"ahler manifold $X$ to a normal projective variety $Y$ with 
connected fibers. In this situation, the arguments 
in \cite{fujino-mori}, \cite{ambro2}, \cite[Section 3]{fujino-gongyo3}, and so 
on work with some minor modifications. 
This is because we can use the theory of variations of (mixed) 
$\mathbb R$-Hodge structure for 
$\Phi:X\to Y$. In general, the general fibers of $\Phi$ are not projective. 
They are only K\"ahler. Therefore, the natural polarization of the 
variation of (mixed) Hodge structure is defined only on $\mathbb R$. 

Anyway, by the arguments in \cite[Sections 4 and 5]{fujino-mori}, 
we can find an effective $\mathbb Q$-divisor $\Delta_Y$ on $Y$ such that 
the finite generation 
of $R(X, \Delta)$ is equivalent to the finite generation of $R(Y, \Delta_Y)$ where 
$(Y, \Delta_Y)$ is kawamata log terminal and $K_Y+\Delta_Y$ is big. 
Therefore, by \cite{bchm}, $R(X, \Delta)$ is finitely generated. 
\end{say}

\begin{rem}
Let $f:X\to Y$ be a surjective morphism from a compact K\"ahler manifold 
(or, more generally, a complex analytic variety in Fujiki's class $\mathcal C$) 
$X$ to a projective variety $Y$. 
In this setting, we can prove various fundamental 
results, for example, Koll\'ar type vanishing theorem, torsion-free theorem, 
weak positivity theorem, and so on. 
For the details, see \cite{fujino7}. 
\end{rem}

By the arguments in \cite[Sections 4 and 5]{ambro2} and the semipositivity theorem 
in \cite{fujino2} (see also \cite{fujino-fujisawa}, \cite{ffs}, and 
\cite[Theorem 1.5]{fujino7}), we 
can prove an analytic generalization of Theorem \ref{thm25} 
without any difficulties (see Example \ref{ex-atiyah} and 
Remark \ref{rem63} for the case when the 
varieties are not in Fujiki's class 
$\mathcal C$). 

\begin{thm}[{cf.~\cite[Theorem 3.6]{fujino-gongyo3}}]\label{thm43} 
Let $X$ be a normal complex analytic variety in Fujiki's class $\mathcal C$ 
and let 
$\Delta$ be a $\mathbb Q$-divisor 
on $X$ such that 
$K_X+\Delta$ is $\mathbb Q$-Cartier. 
Let $f:X\to Y$ be a surjective 
morphism onto a normal projective variety $Y$. 
Assume that $f:(X, \Delta)\to Y$ is an lc-trivial fibration, 
that is, 
\begin{itemize}
\item[(i)] $(F, \Delta|_F)$ is sub log canonical for a general fiber $F$ of $f:X\to Y$, 
\item[(ii)] $\rank f_*\mathcal O_X(\lceil \mathbf A^*(X, \Delta)\rceil)=1$, 
and 
\item[(iii)] there exists a $\mathbb Q$-Cartier $\mathbb Q$-divisor 
$D$ on $Y$ such that 
$$
K_X+\Delta\sim _{\mathbb Q}f^*D. 
$$ 
\end{itemize}
Let $\mathbf B$ and $\mathbf M$ be the induced discriminant and 
moduli $\mathbb Q$-b-divisor of $f:(X, \Delta)\to Y$. 
Then 
\begin{itemize}
\item[(1)] $\mathbf K+\mathbf B$ is $\mathbb Q$-b-Cartier, that is, 
there exists a proper birational morphism 
$Y'\to Y$ from a normal variety $Y'$ such that $\mathbf K+\mathbf B=\overline 
{K_{Y'}+\mathbf B_{Y'}}$, 
\item[(2)] $\mathbf M$ is b-nef. 
\end{itemize}
\end{thm}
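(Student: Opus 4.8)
The plan is to run the proof of the algebraic Theorem \ref{thm25}---that is, the arguments of Ambro in \cite[Sections 4 and 5]{ambro2} together with the extension in \cite[Section 3]{fujino-gongyo3}---and to replace the algebraic Hodge-theoretic input by the $\mathbb R$-Hodge-theoretic semipositivity theorem of \cite{fujino2}. The decisive feature of the hypotheses is that the base $Y$ is projective: this lets us carry out all the geometry on the base algebraically and confines the genuinely analytic (K\"ahler) difficulties to the fibers of $f$.

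First I would set up the standard model. After resolving $X$, choosing a proper birational modification $\sigma\colon Y'\to Y$ with $Y'$ smooth (hence projective, since $Y$ is), and performing the base change of \ref{say23}, I may assume that the induced lc-trivial fibration $f'\colon (X', \Delta_{X'})\to Y'$ satisfies: $\Sigma_{Y'}=\Exc(\sigma)\cup \Supp B_{Y'}$ is a simple normal crossing divisor, $\Supp \Delta_{X'}$ is a simple normal crossing divisor, and $f'$ is smooth with relatively normal crossing boundary over $Y'\setminus \Sigma_{Y'}$. This is precisely the set-up of \cite[p.~245, set-up]{ambro2}; since only the total space is K\"ahler while the base is projective, this reduction is unaffected by the analytic nature of $X$.

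For statement (1), I would analyze the discriminant. The coefficients $1-b_P$ of $B_{Y'}$ are log canonical thresholds computed over the generic points of the prime divisors $P\subset Y'$ (see \ref{say28}), so they depend only on the local analytic geometry of $(X', \Delta_{X'})$ along the fibers. Passing to a sufficiently high model (a semistable-reduction-type modification, available over the projective base), these thresholds stabilize, and one obtains $\mathbf K+\mathbf B=\overline{K_{Y'}+B_{Y'}}$, giving the $\mathbb Q$-b-Cartier property exactly as in the algebraic case. The only point to verify is that the local lc-threshold computation is insensitive to whether the fibers are projective or merely K\"ahler, which it is.

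The main work, and the expected obstacle, is (2). On the model $Y'$ the moduli divisor $M_{Y'}$ is identified, through the canonical bundle formula, with a determinant of a Hodge filtration piece of the variation of (mixed) $\mathbb R$-Hodge structure carried by the cohomology of the fibers of $f'$ (cf.~\cite[Section 3]{fujino-gongyo3}). In the algebraic setting one deduces nefness from the semipositivity of Hodge bundles; here the fibers are only K\"ahler, so the natural polarization exists only over $\mathbb R$ and the usual algebraic semipositivity results do not apply directly. The remedy is to invoke the semipositivity theorem of \cite{fujino2}, together with its Hodge-theoretic refinements in \cite{fujino-fujisawa}, \cite{ffs}, and \cite[Theorem 1.5]{fujino7}, which are formulated exactly for variations of (mixed) $\mathbb R$-Hodge structure and yield the nefness of the relevant Hodge-theoretic sheaf on the projective variety $Y'$. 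Since $Y'$ is projective, nefness of $M_{Y'}$ is meaningful and the b-nefness of $\mathbf M$ follows. Thus the crux is to confirm that the K\"ahler, non-projective nature of the fibers is absorbed entirely by the $\mathbb R$-Hodge-theoretic semipositivity theorem, after which the arguments of \cite{ambro2} and \cite{fujino-gongyo3} transfer verbatim.
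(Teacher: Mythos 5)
Your proposal is correct and follows essentially the same route as the paper, which establishes Theorem \ref{thm43} precisely by transferring the arguments of \cite[Sections 4 and 5]{ambro2} and \cite[Section 3]{fujino-gongyo3}, with the algebraic semipositivity input replaced by the semipositivity theorem of \cite{fujino2} for variations of (mixed) $\mathbb R$-Hodge structure (see also \cite{fujino-fujisawa}, \cite{ffs}, and \cite[Theorem 1.5]{fujino7}), exploiting that the base $Y$ is projective so the analytic difficulty is confined to the merely K\"ahler fibers, where the polarization exists only over $\mathbb R$. The paper gives only this brief indication rather than a detailed proof, and your write-up supplies the same idea with somewhat more detail.
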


In the setting of Theorem \ref{thm43}, we have: 

\begin{conj}[{cf.~Conjecture \ref{conj29}}]\label{conj44} 
In Theorem \ref{thm43}, $\mathbf M$ is b-semi-ample. 
\end{conj}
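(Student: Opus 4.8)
The plan is to exploit that the base $Y$ in Theorem \ref{thm43} is already projective, so that every proper birational model $Y'\to Y$ is projective as well, and that by part (2) of Theorem \ref{thm43} the moduli part $\mathbf M$ is already known to be b-nef; thus Conjecture \ref{conj44} amounts only to upgrading b-nefness to b-semi-ampleness. First I would fix, using that b-nefness, a projective model $Y'\to Y$ with $\mathbf M=\overline{\mathbf M_{Y'}}$ and $M_{Y'}$ nef, so that the entire question becomes the semi-ampleness of the single $\mathbb Q$-divisor $M_{Y'}$ on the projective variety $Y'$. The guiding principle is that $\mathbf M$ is a birational invariant of the fibration, determined by the variation of (mixed) Hodge structure of $f$ over the locus where it is nice; hence the problem is insensitive to birational modifications of $X$ and $Y$ and should be attacked through that Hodge-theoretic data.

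A natural first case, mirroring the second part of Theorem \ref{thm216}, is relative dimension one, $\dim X=\dim Y+1$. Here the general fibers are compact K\"ahler curves, hence automatically projective, and the associated variation of Hodge structure is the standard polarized weight-one structure, defined over $\mathbb Q$. Consequently the K\"ahler pathology disappears at the level of the moduli data, and I would expect $M_{Y'}$ to be computed by an honest algebraic lc-trivial fibration of relative dimension one over $Y'$, so that \cite[Theorem 8.1]{prokhorov-shokurov} applies and yields b-semi-ampleness exactly as in the proof of Theorem \ref{thm216}.

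The heart of the general statement is genuinely Hodge-theoretic. The divisor $M_{Y'}$ is built from the bottom piece of the Hodge filtration of the variation attached to $f$, and the known cases of Conjecture \ref{conj29} (see \cite{kawamata-sub}, \cite{fujino-nagoya}, and \cite[Section 8]{prokhorov-shokurov}) establish semi-ampleness by factoring this data through a period map to a quasi-projective moduli space and pulling back an ample class. In the present setting the fibers are only compact K\"ahler, so for relative dimension at least two the natural polarization lives only over $\mathbb R$; this is precisely what lets the semipositivity theorem of \cite{fujino2} yield nefness of $M_{Y'}$, but it does not supply the algebraic period map needed for semi-ampleness.

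I therefore expect the main obstacle to coincide with the obstacle that keeps Conjecture \ref{conj29} open: there is no general device converting a nef moduli part arising from a merely $\mathbb R$-polarized variation of Hodge structure into a semi-ample $\mathbb Q$-divisor. A complete solution of Conjecture \ref{conj29}, combined with an argument realizing the $\mathbb R$-polarized moduli data of the K\"ahler fibration by an algebraic lc-trivial fibration over $Y'$---carried out along the lines of the reductions in \ref{214} and \ref{215}, possibly after the generically finite base changes of \ref{say23}---should yield Conjecture \ref{conj44}. In the absence of such an argument, Conjecture \ref{conj44} must be regarded as at least as hard as its algebraic counterpart.
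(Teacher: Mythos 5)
The statement you were asked about is Conjecture \ref{conj44}, and the paper contains no proof of it: it is stated as an open problem, accompanied only by the remark that it ``may be harder than Conjecture \ref{conj29} because there are no good moduli theory for compact K\"ahler manifolds'' and by the discussion in Observation IV (see \ref{410}). You correctly recognized that no proof is available and did not manufacture one; instead you gave an obstruction analysis, and that analysis matches the paper's own: the semipositivity theorem of \cite{fujino2} applies to the merely $\mathbb R$-polarized variations of (mixed) Hodge structure arising from K\"ahler fibers and yields b-nefness (Theorem \ref{thm43}(2)), but semi-ampleness in the known algebraic cases is obtained via period maps to quasi-projective moduli spaces, which is exactly the device unavailable here. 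Your conclusion that Conjecture \ref{conj44} is at least as hard as Conjecture \ref{conj29} is precisely the paper's stance.

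One caveat on your relative-dimension-one discussion: you assert that since the fibers are curves the ``K\"ahler pathology disappears'' and \cite[Theorem 8.1]{prokhorov-shokurov} should apply as in Theorem \ref{thm216}. But the total space $X$ is only in Fujiki's class $\mathcal C$ (an elliptic fibration over a projective base need not be algebraic), so realizing the moduli data by an honest algebraic lc-trivial fibration over $Y'$ is itself a nontrivial step, and the paper is explicit in \ref{410} that the reductions of \ref{214} (which rest on the minimal model program, and which together with \ref{215} drive the proof of Theorem \ref{thm216}) have not been established in the K\"ahler category. So even the curve-fiber case of Conjecture \ref{conj44} should be flagged as open rather than expected to follow formally; your hedged phrasing (``I would expect'') is appropriate, but the gap between the $\mathbb R$-polarized K\"ahler data and an algebraic fibration is the missing ingredient there too, not only in relative dimension $\geq 2$.
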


Conjecture \ref{conj44} may be harder than Conjecture \ref{conj29} 
because there are no good moduli theory for compact K\"ahler manifolds. 

\begin{proof}[Proof of Theorem \ref{thm31}] 
Let $f:X\dashrightarrow Y$ be the Iitaka fibration with respect to 
$K_X+\Delta$. By replacing $X$ and $Y$ bimeromorphically, we may assume that 
$Y$ is a smooth projective variety, $X$ is a compact K\"ahler manifold, 
$(X, \Delta)$ is kawamata log terminal 
such that $\Supp \Delta$ is a simple normal crossing divisor on $X$, 
and $f$ is a morphism. 
By using the theory of log-canonical bundle formulas 
discussed in \cite[Section 4]{fujino-mori} with the aid of Theorem \ref{thm43}, 
we can apply \cite[Theorem 5.2]{fujino-mori}. 
Then there are a smooth projective variety $Y'$, which 
is birationally equivalent to $Y$, and an effective 
$\mathbb Q$-divisor $\Delta'$ on $Y'$ such that $(Y', \Delta')$ is a kawamata log 
terminal pair, $K_{Y'}+\Delta'$ is big, and 
$$
R(X, \Delta)^{(e)}\simeq R(Y', \Delta')^{(e')}
$$ 
for some positive integers $e$ and $e'$, 
where 
$$
R(Y', \Delta')=\bigoplus_{m\geq 0}H^0(Y', \mathcal O_{Y'}(\lfloor 
m(K_{Y'}+\Delta')\rfloor)).  
$$ 
Note that $R^{(e)}=\bigoplus _{m\geq 0}R_{em}$ for a graded ring 
$R=\bigoplus _{m\geq 0}R_m$. 
By \cite{bchm}, $R(Y', \Delta')$ is a finitely generated 
$\mathbb C$-algebra. 
This implies that $R(X, \Delta)$ is a finitely generated $\mathbb C$-algebra. 
\end{proof}

\begin{rem}
By using Theorem \ref{thm43}, we can prove some analytic generalizations 
of Theorem \ref{thm11}, Theorem \ref{thm-add}, and so on. 
We leave the details for the interested reader. 
\end{rem}

Conjecture \ref{conj45} is obviously equivalent to 
Conjecture \ref{conj20} by taking a resolution. 

\begin{conj}\label{conj45} 
Let $X$ be a normal complex analytic variety in Fujiki's class $\mathcal C$ 
and let $\Delta$ be an effective $\mathbb Q$-divisor 
on $X$ such that $(X, \Delta)$ is log canonical. 
Then 
the log canonical ring 
$$
R(X, \Delta)=\bigoplus _{m\geq 0}H^0(X, \mathcal O_X(\lfloor 
m(K_X+\Delta)\rfloor))
$$ 
is a finitely generated $\mathbb C$-algebra. 
\end{conj}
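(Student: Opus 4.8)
The plan is to push the strategy used for the kawamata log terminal case (the proof of Theorem \ref{thm31}) as far as it will go and then to isolate where the merely log canonical hypothesis obstructs it. First I would attach to $K_X+\Delta$ its Iitaka fibration $f\colon X\dashrightarrow Y$. As explained in \ref{say-ideas}, the base $Y$ is automatically projective even though $X$ only lies in Fujiki's class $\mathcal C$; so after a bimeromorphic modification of $X$ and $Y$ I may assume that $f\colon X\to Y$ is a proper surjective morphism with connected fibers from a compact K\"ahler manifold to a smooth projective variety, that $\Supp\Delta$ is simple normal crossing, and that $K_X+\Delta\sim_{\mathbb Q,f}0$. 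In this situation $f\colon(X,\Delta)\to Y$ is an lc-trivial fibration in the sense of Theorem \ref{thm43}.

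Next I would invoke the analytic log canonical bundle formula. By Theorem \ref{thm43} I may write $K_X+\Delta\sim_{\mathbb Q}f^*(K_Y+B_Y+M_Y)$ with $B_Y$ the discriminant and $M_Y$ the moduli part, where $\mathbf K+\mathbf B$ is $\mathbb Q$-b-Cartier and $\mathbf M$ is b-nef. Since $f$ is the Iitaka fibration, $K_Y+B_Y+M_Y$ is big. Comparing sections along $f$ as in \cite[Sections 4 and 5]{fujino-mori} and passing to a resolution of $Y$, the finite generation of $R(X,\Delta)$ would become equivalent to the finite generation of $R(Y,\Delta_Y)$ for a suitable projective log canonical pair $(Y,\Delta_Y)$ with $K_Y+\Delta_Y$ big, exactly as \cite[Theorem 5.2]{fujino-mori} produces an effective divisor in the klt case. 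This would reduce the analytic statement to a genuinely projective one.

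The hard part will be manufacturing the effective $\mathbb Q$-divisor $\Delta_Y$. In the klt setting every log canonical center dominates $Y$, so the coefficients of $B_Y$ stay below $1$ and one obtains a klt pair; here $B_Y$ may carry coefficients equal to $1$, and, more seriously, the moduli part $\mathbf M$ is only known to be b-nef. To replace $M_{Y'}$ on a resolution $Y'\to Y$ by an effective $\mathbb Q$-divisor $G$ defining an honest log canonical pair, in the spirit of Remark \ref{rem-new}, I would need $\mathbf M$ to be b-semi-ample, which is precisely Conjecture \ref{conj44}. Without it, the canonical bundle formula delivers only a b-divisor on $Y$ rather than a pair, so the reduction to the projective case cannot be completed; this is exactly the uncertainty recorded in Remark \ref{rem47}.

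Finally, even if one grants Conjecture \ref{conj44} and produces a projective log canonical pair $(Y,\Delta_Y)$ with $K_Y+\Delta_Y$ big, a second genuine obstacle remains: one still needs the finite generation of the log canonical ring for projective log canonical pairs, which is itself a central open conjecture, proved only in dimension at most four \cite[Theorem 1.2]{fujino-finite}. One could attempt to induct on dimension through the Iitaka fibration, using that $\dim Y<\dim X$ when $(X,\Delta)$ is not of log general type, but the remaining base case of log canonical pairs of log general type is exactly what is not yet known. Thus a complete proof appears to rest both on the b-semi-ampleness of the moduli part and on the projective log canonical finite generation conjecture.
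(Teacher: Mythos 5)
The statement you were asked about is Conjecture \ref{conj45}, which the paper leaves open: it supplies no proof, only the observation that it is equivalent to Conjecture \ref{conj20} by taking a resolution, together with the discussion in Remark \ref{rem47} and Observation IV (\ref{410}) of how a proof would have to proceed. Your proposal correctly refrains from claiming a proof and reproduces the paper's own assessment essentially verbatim: the reduction to the projective case along the Iitaka fibration via \cite[Sections 4 and 5]{fujino-mori} requires the b-semi-ampleness of the moduli part (Conjecture \ref{conj44} --- and, as the paper notes, Theorem \ref{thm43} alone is not sufficient for this reduction), and even granting that, one still faces the finite generation conjecture for projective log canonical pairs, which is open and known only in dimension at most four (\cite[Theorem 1.2]{fujino-finite}).
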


We give a remark on Conjecture \ref{conj45}. 

\begin{rem}\label{rem47} 
Conjecture \ref{conj45} is still open even when $X$ is projective. 
If Conjecture \ref{conj44} holds true, then we can reduce Conjecture 
\ref{conj45} to the case when $X$ is projective by the 
same way as in \cite[Sections 4 and 5]{fujino-mori} 
(see also \ref{say-ideas}, Proof of Theorem \ref{thm31}, and Remark \ref{rem-new}). 
Note that Theorem \ref{thm43} is not sufficient 
for this reduction argument. 
\end{rem}

We close this section with an observation on Conjecture \ref{conj44}. 

\begin{say}[Observation IV]\label{410}
Let $f:(X, \Delta)\to Y$ be an lc-trivial fibration as in Theorem \ref{thm43}. 
By taking a resolution, we assume that $X$ is a compact K\"ahler manifold and that $\Supp \Delta$ is 
a simple normal crossing divisor on $X$. 
For Conjecture \ref{conj44}, it seems to be reasonable 
to assume that  
$$
\rank f_*\mathcal O_X(\lceil m\Delta^-\rceil)=1
$$ 
for every nonnegative integer $m$ as in \ref{213}, equivalently, 
$$
\kappa (F, K_F+\Delta^+|_F)=0
$$ 
where $F$ is a sufficiently general fiber of $f$. 
The extra assumption $\kappa(F, K_F+\Delta^+|_F)=0$ is harmless for \cite[Sections 4 and 5]{fujino-mori}. 
Therefore, Remark \ref{rem47} works even if we add the extra assumption $\kappa(F, K_F+\Delta^+|_F)=0$ 
to Conjecture \ref{conj44}. 
Unfortunately, the reduction arguments 
in \ref{214} based on the minimal model program 
have not been established for compact K\"ahler manifolds. 

Anyway, Conjecture \ref{conj44} looks harder than Conjecture \ref{conj29}. 
\end{say}

\section{Examples of non-K\"ahler manifolds}\label{sec-counterexamples} 

In this section, we discuss some examples 
of compact complex non-K\"ahler manifolds 
constructed by Atiyah (\cite{atiyah}) and Wilson (\cite{wilson}) 
for the reader's convenience. These 
examples clarify the 
reason why we have to assume that 
the varieties are in Fujiki's class $\mathcal C$ in 
Section \ref{sec3}. 
For some related 
examples, see \cite[Remark 15.3]{ueno} and \cite{magnusson}. 

The following example is due to Atiyah (see \cite[\S 10, Specific examples]{atiyah}). 
This example shows that 
the Fujita--Kawamata semipositivity theorem does not hold 
for non-K\"ahler manifolds. 
For the details of the theory of 
fiber spaces of complex tori, see \cite{atiyah}. 

\begin{ex}[{cf.~\cite[\S 10]{atiyah}}]\label{ex-atiyah} 
Let us construct an analytic family of tori $f:X\to Y=\mathbb P^1$ such that 
$f_*\omega_{X/Y}$ is not semipositive. 

We put 
$$
I=\begin{pmatrix}0, &1\\ -1, &0
\end{pmatrix}, \quad
J=\begin{pmatrix}0, &\sqrt{-1}\\ \sqrt{-1},& 0
\end{pmatrix}, \quad
K=\begin{pmatrix}\sqrt{-1}, &0\\ 0, &-\sqrt{-1}
\end{pmatrix}, 
$$
and 
$$
E=\begin{pmatrix}1, &0\\
0, &1
\end{pmatrix}. 
$$
We take $s_1, s_2\in H^0(\mathbb P^1, \mathcal O_{\mathbb P^1}(1))\setminus \{0\}$ 
such that $s_1$ and $s_2$ have no common zeros. 
We consider the analytic family of tori 
$f:X\to Y:=\mathbb P^1$
where $$X=\mathbb V(\mathcal O_{\mathbb P^1}(1)\oplus 
\mathcal O_{\mathbb P^1}(1))/
\Lambda. $$
Note that $\mathbb V(\mathcal O_{\mathbb P^1}(1)\oplus 
\mathcal O_{\mathbb P^1}(1))=\mathrm{Spec}_{\mathbb P^1}
\mathrm{Sym}((\mathcal 
O_{\mathbb P^1}(1)\oplus \mathcal O_{\mathbb P^1}(1))^*)$ is the total space of 
$\mathcal O_{\mathbb P^1}(1)\oplus 
\mathcal O_{\mathbb P^1}(1)$ and 
$$
\Lambda=\left< E\begin{pmatrix}s_1\\ s_2\end{pmatrix}, \  
I\begin{pmatrix}s_1\\ s_2\end{pmatrix}, \ 
J\begin{pmatrix}s_1\\ s_2\end{pmatrix}, \ 
K\begin{pmatrix}s_1\\ s_2\end{pmatrix}
\right>.  
$$
In other words, the fiber $X_p=f^{-1}(p)$ for $p\in Y$ 
is $\mathbb C^2/\Lambda(p)$, 
where $\Lambda(p)$ is the lattice 
$$
\left<  
E\begin{pmatrix}s_1(p)\\ s_2(p)\end{pmatrix}, \ 
I\begin{pmatrix}s_1(p)\\ s_2(p)\end{pmatrix}, \ 
J\begin{pmatrix}s_1(p)\\ s_2(p)\end{pmatrix}, \ 
K\begin{pmatrix}s_1(p)\\ s_2(p)\end{pmatrix}
\right>_{\mathbb Z}
$$
in $\mathbb C^2$. For the 
details of the construction, see \cite{atiyah}. 
Then $\mathcal \omega_{X/Y}\simeq 
f^*\mathcal O_{\mathbb P^1}(-2)$ by \cite[Proposition 10]{atiyah}. 
Therefore, we have  
$$
f_*\mathcal \omega_{X/Y}\simeq \mathcal O_{\mathbb P^1}(-2). 
$$ 
This means that $f_*\omega_{X/Y}$ is not always 
semipositive when $X$ is not K\"ahler. 
Note that $f$ is smooth in this example. 
\end{ex}

\begin{rem} 
Let $f:V\to W$ be a surjective morphism 
from a compact complex manifold $V$ in Fujiki's class $\mathcal C$ to 
a smooth projective curve $W$. 
Then we can easily check that $f_*\omega_{V/W}$ is semipositive 
by \cite{fujita}. 
This means that $X$ in Example \ref{ex-atiyah} is not 
in Fujiki's class $\mathcal C$.  
\end{rem}

\begin{rem}\label{rem63} 
Example \ref{ex-atiyah} shows that 
Theorem \ref{thm43} does not hold without assuming that 
$X$ is in Fujiki's class $\mathcal C$. 
Therefore, the proof of Theorem \ref{thm31} 
does not work 
for varieties which are not in Fujiki's class $\mathcal C$. 
\end{rem}

The following example is essentially the same as Wilson's example 
(see \cite[Example 4.3]{wilson}). 
It is a compact complex non-K\"ahler $4$-fold whose canonical 
ring is not a finitely generated $\mathbb C$-algebra. 
Wilson's example is very important. 
Unfortunately, \cite[Example 4.3]{wilson} 
omitted some technical details. 
Moreover, we can not find it in the standard literature for the 
minimal model program. 
So we explain a slightly simplified example in details 
for the reader's convenience. 

\begin{ex}[{cf.~\cite[Example 4.3]{wilson}}]\label{ex-wilson} 
Let us construct a $4$-dimensional compact complex non-K\"ahler 
manifold $X$ whose canonical ring $R(X)$ is not a finitely 
generated $\mathbb C$-algebra. 

Let $C\subset \mathbb P^2$ be a smooth elliptic curve 
and let $H$ be a line on $\mathbb P^2$. 
We blow up 
$12$ general points $P_1, \cdots, P_{12}$ on $C$ 
and one point $P\not\in C$. 
Let $\pi:Z\to \mathbb P^2$ denote this birational modification 
and let $E$ be the exceptional curve $\pi^{-1}(P)$. 
Let $C'$ be the strict transform of $C$. 
We put $H'=\pi^*H-E$. 
Then the linear system $|H'|$ is free and $(H')^2=0$. 
Note that $K_{Z}\sim -C'+E$. 
\begin{cla}\label{claim1}
The linear system $|n\pi^*H+(n-1)C'|$ is free for every $n\geq 1$ and 
the base locus $\Bs|n\pi^*H+nC'|=C'$ for every $n\geq 1$. 
Therefore, we have 
$$
|n\pi^*H+nC'|=|n\pi^*H+(n-1)C'|+C' 
$$ for every $n\geq 1$. 
\end{cla}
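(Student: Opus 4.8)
We are on the surface $Z$, the blow-up of $\mathbb P^2$ at $13$ points ($12$ general points on a smooth cubic $C$, plus one point $P\notin C$). We have $H'=\pi^*H-E$ with $|H'|$ free and $(H')^2=0$, and $K_Z\sim -C'+E$. The claim concerns the two families of divisors $D_n=n\pi^*H+(n-1)C'$ and $D_n'=n\pi^*H+nC'$, asserting that $|D_n|$ is free and that $\Bs|D_n'|=C'$ for all $n\geq 1$.

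Let me sketch how I would prove this. First I would compute intersection numbers to understand $C'$ on $Z$. Since $C'$ is the strict transform of the cubic $C$ through the $12$ blown-up points $P_1,\dots,P_{12}$ (but not $P$), we have $C'\sim \pi^*(3H)-\sum_{i=1}^{12}E_i$, so $(C')^2 = 9 - 12 = -3$ and $C'\cdot\pi^*H=3$. The key feature is that $C'$ is a $(-3)$-curve of arithmetic genus $1$ (an elliptic curve), so $\mathcal O_{C'}(C')$ is a degree $-3$ line bundle on an elliptic curve. I would also record $\pi^*H\cdot C'=3$ and $(\pi^*H)^2=1$.

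The plan is to prove both statements by induction on $n$ using the restriction exact sequence. For freeness of $|D_n|=|n\pi^*H+(n-1)C'|$, I would write $D_n = D_{n-1}+(\pi^*H+C')$ and use the sequence
\[
0\to \mathcal O_Z(D_n-C')\to \mathcal O_Z(D_n)\to \mathcal O_{C'}(D_n|_{C'})\to 0.
\]
Now $D_n|_{C'}$ has degree $(n\pi^*H+(n-1)C')\cdot C' = 3n + (n-1)(-3) = 3$, a degree-$3$ line bundle on the elliptic curve $C'$, hence base-point free and very ample on $C'$. So it suffices to show that global sections of $\mathcal O_Z(D_n)$ surject onto $H^0(C',\mathcal O_{C'}(D_n|_{C'}))$ and that $|D_n|$ has no base points off $C'$; the surjection follows if $H^1(Z,\mathcal O_Z(D_n-C'))=H^1(Z,\mathcal O_Z(D_{n-1}+\pi^*H))$ vanishes, which I would establish by the induction hypothesis together with Kawamata--Viehweg-type vanishing (writing $D_{n-1}+\pi^*H - K_Z$ as nef-and-big plus effective, using $-K_Z\sim C'-E$). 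For freeness off $C'$ one uses that $|\pi^*H+C'|=|\pi^*(4H)-\sum E_i|$ already separates points away from $C'$, or more simply that $|n\pi^*H|$ is free and $C'$ is the only place where positivity can fail.

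For the second statement, $\Bs|D_n'|=C'$, I would run the same restriction sequence but now $D_n'|_{C'}$ has degree $3n+n(-3)=0$, so $\mathcal O_{C'}(D_n'|_{C'})$ is a degree-$0$ line bundle on $C'$. The crucial point — and this is where I expect the main obstacle — is to show this degree-$0$ bundle is \emph{nontrivial}, so that $H^0(C',\mathcal O_{C'}(D_n'|_{C'}))=0$ and hence $C'\subset\Bs|D_n'|$; combined with the already-proved freeness of $|D_n'-C'|=|D_n|$ this gives exactly $\Bs|D_n'|=C'$ and the decomposition $|D_n'|=|D_n|+C'$. The nontriviality of $\mathcal O_{C'}(\pi^*H+C')|_{C'}$, equivalently $\mathcal O_{C'}(nC')|_{C'}$ being a nontorsion (or at least nontrivial) point of $\mathrm{Pic}^0(C')$, is precisely what the genericity of the $12$ points $P_1,\dots,P_{12}$ on $C$ buys us: the normal bundle $\mathcal O_{C'}(C')$ corresponds to $\mathcal O_C(3H-\sum P_i)$, a degree-$0$ class on $C$ that is nontrivial (indeed non-torsion) for general $P_i$, and this is exactly Wilson's device for producing non-finite-generation. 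I would isolate this as the one genuinely geometric input and verify that for general choice of the $P_i$ the class $\mathcal O_{C'}(C')|_{C'}$ is non-torsion in $\mathrm{Pic}^0(C')$, so that $\mathcal O_{C'}(nC')|_{C'}$ is nontrivial for every $n\geq 1$; the rest is bookkeeping with the exact sequences and vanishing.
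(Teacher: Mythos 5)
Your overall strategy is essentially the paper's: restrict along $C'$ via the exact sequence $0\to \mathcal O_Z(D-C')\to\mathcal O_Z(D)\to\mathcal O_{C'}(D)\to 0$, use that a degree-$3$ line bundle on the elliptic curve $C'$ is very ample (hence free), lift sections via Kawamata--Viehweg vanishing (the relevant divisors are nef and big once one checks, as you implicitly do, that $\pi^*H+C'$ is nef and big), and use the generality of $P_1,\dots,P_{12}$ to kill $H^0$ of the degree-$0$ restriction, forcing every member of $|n\pi^*H+nC'|$ to contain $C'$. The only structural difference is that you induct on $n$ via $D_n=D_{n-1}+(\pi^*H+C')$, whereas the paper fixes $n$ and inducts on $r$ in $|n\pi^*H+rC'|$ for $0\le r\le n-1$; both inductions rest on the same vanishing inputs ($D_n-C'-K_Z\sim(n-1)(\pi^*H+C')+H'$ in your version, $(n-1)\pi^*H+rC'+H'$ in the paper's) and are interchangeable.

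There is, however, a recurring numerical slip in what you single out as ``the one genuinely geometric input.'' You assert that the normal bundle $\mathcal O_{C'}(C')\simeq\mathcal O_C(3H-\sum_i P_i)$ is ``a degree-$0$ class'' and propose to verify that it (``equivalently'' $\mathcal O_{C'}(nC')$) is non-torsion in $\Pic^0(C')$. By your own computation $(C')^2=-3$, this bundle has degree $-3$, so it does not lie in $\Pic^0(C')$, the condition as stated is vacuous, and verifying it would not yield $H^0(C',\mathcal O_{C'}(n\pi^*H+nC'))=0$. The class whose non-torsion is actually needed --- and which you do name correctly once --- is $\mathcal O_{C'}(\pi^*H+C')\simeq\mathcal O_C(4H-\sum_{i=1}^{12}P_i)$, of degree $12-12=0$. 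This is precisely the genericity condition the paper imposes (it takes $P_1,\dots,P_{12}$ so that $\mathcal O_{C'}(\pi^*H+C')$ is not torsion in $\Pic^0(C')$), and it immediately gives $H^0(C',\mathcal O_{C'}(n(\pi^*H+nC')|_{C'}))=H^0(C',\mathcal O_{C'}(n(\pi^*H+C')))=0$ for all $n\ne 0$, hence the inclusion $H^0(Z,\mathcal O_Z(n\pi^*H+(n-1)C'))\to H^0(Z,\mathcal O_Z(n\pi^*H+nC'))$ is an isomorphism and $|n\pi^*H+nC'|=|n\pi^*H+(n-1)C'|+C'$. With that correction your argument goes through and coincides with the paper's proof.
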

\begin{proof}[Proof of Claim \ref{claim1}] 
It is obvious that $|n\pi^*H|$ is free. 
We consider the following short exact sequence: 
\begin{align}\label{shiki1}
0&\to \mathcal O_Z(n\pi^*H+(r-1)C')\to \mathcal O_Z(n\pi^*H+rC')\tag{$\spadesuit$}
\\ &\to 
\mathcal O_{C'}(n\pi^*H+rC')\to 0\notag
\end{align}
for $1\leq r\leq n$. 
Note that $\deg \mathcal O_{C'}(n\pi^*H+rC')\geq 3$ for $1\leq r\leq n-1$. 
Therefore, $|\mathcal O_{C'}(n\pi^*H+rC')|$ is very ample 
for $1\leq r\leq n-1$ since $C'$ is an elliptic curve. 
On the other hand, 
\begin{align*}
n\pi^*H+(r-1)C'-K_Z&\sim n\pi^*H+rC'-E\\
&=(n-1)\pi^*H+rC'+H'
\end{align*} 
is nef and big for $1\leq r\leq n-1$. 
By the Kawamata--Viehweg vanishing theorem, we obtain 
$$
H^1(Z, \mathcal O_Z(n\pi^*H+(r-1)C'))=0
$$ 
for $1\leq r\leq n-1$. By using the long exact sequence associated to 
\eqref{shiki1}, we have that $|n\pi^*H+rC'|$ is free for $1\leq r \leq n-1$ 
by induction 
on $r$. Note that 
$$
H^0(C', \mathcal O_{C'}(n\pi^*H+nC'))=0
$$ 
for every $n\ne 0$ since $P_1, \cdots, P_{12}$ are general points on $C$. 
Precisely speaking, we take $P_1, \cdots, P_{12}$ 
such that $\mathcal O_{C'}(\pi^*H+C')$ is not a torsion 
element in $\mathrm{Pic}^0(C')$.  
This means that the natural inclusion 
$$
0\to H^0(Z, \mathcal O_Z(n\pi^*H+(n-1)C'))\to H^0(Z, \mathcal O_Z(n\pi^*H+nC'))
$$ 
is an isomorphism for every $n\geq 1$. 
Thus we have $$|n\pi^*H+nC'|=|n\pi^*H+(n-1)C'|+C'$$ 
for every $n\geq 1$.  
\end{proof}

Similarly, we can check the following statement. 

\begin{cla}\label{claim2}
The linear system $|4\pi^*H+4H'+6C'-2E|$ is free. 
\end{cla}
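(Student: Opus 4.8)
The plan is to prove Claim \ref{claim2} by running exactly the restriction-and-vanishing induction used for Claim \ref{claim1}, after rewriting the divisor so that its ``$C'$-part'' is isolated. First I would substitute $H'=\pi^*H-E$ to get $4\pi^*H+4H'+6C'-2E=8\pi^*H-6E+6C'$, and observe that removing all six copies of $C'$ leaves $8\pi^*H-6E=2\pi^*H+6H'$, which is free since it is a sum of the free linear systems $|\pi^*H|$ and $|H'|$. This serves as the base case $N_0$ of an induction on $N_k:=2\pi^*H+6H'+kC'$, where $N_6$ is the divisor in the claim.

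For the inductive step I would use, for $1\le k\le 6$, the exact sequence
\[
0\to \mathcal O_Z(N_{k-1})\to \mathcal O_Z(N_k)\to \mathcal O_{C'}(N_k)\to 0.
\]
Assuming $|N_{k-1}|$ is free, the fixed divisor $C'$ forces $\Bs|N_k|\subseteq C'$, so it suffices to see that $|N_k|$ has no base points along $C'$. The restriction $\mathcal O_{C'}(N_k)$ has degree $N_k\cdot C'=24-3k\ge 6$, hence is base point free (indeed very ample) on the elliptic curve $C'$; and if $H^1(Z,\mathcal O_Z(N_{k-1}))=0$, then $H^0(\mathcal O_Z(N_k))\to H^0(\mathcal O_{C'}(N_k))$ is surjective, so the nonvanishing sections on $C'$ lift and remove the remaining potential base points. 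The required vanishing will come from the Kawamata--Viehweg vanishing theorem applied to $N_{k-1}-K_Z$, where $K_Z\sim -C'+E$ gives $N_{k-1}-K_Z=\pi^*H+7H'+kC'$.

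The main obstacle is precisely the verification that $\pi^*H+7H'+kC'$ is nef and big, because $C'$ is a negative curve ($C'^2=-3$), so the coefficient $k$ could a priori destroy nefness. This is where I expect to spend the calculational effort: using $\pi^*H\cdot C'=H'\cdot C'=3$ and $C'^2=-3$, I would check that $(\pi^*H+7H'+kC')\cdot C'=24-3k>0$ for $k\le 6$, while the intersection with every other irreducible curve is nonnegative (as $\pi^*H$ and $H'$ are nef and $C'$ is irreducible), which gives nefness; bigness then follows from $(\pi^*H+7H'+kC')^2=15+48k-3k^2>0$ in the relevant range. With these two numerical checks in hand the induction closes and $|N_6|=|4\pi^*H+4H'+6C'-2E|$ is free.
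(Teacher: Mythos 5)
Your proposal is correct and follows essentially the same route as the paper: the same rewriting $4\pi^*H+4H'+6C'-2E=2\pi^*H+6H'+6C'$, the same restriction sequence to the elliptic curve $C'$, the same Kawamata--Viehweg vanishing applied to $N_{k-1}-K_Z=\pi^*H+7H'+kC'$, and the same degree bound $\deg\mathcal O_{C'}(N_k)=24-3k\geq 6$. The only difference is that you explicitly verify nefness and bigness of $\pi^*H+7H'+kC'$ by intersection computations (using $C'^2=-3$), which the paper asserts without calculation; your checks are accurate and simply make that step airtight.
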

\begin{proof}[Proof of Claim \ref{claim2}] 
We note that 
$$
4\pi^*H+4H'+6C'-2E=6H'+2\pi^*H+6C'. 
$$ 
We also note that $|H'|$ and $|\pi^*H|$ are free. 
We consider the linear system $|6H'+2\pi^*H+rC'|$ for 
$0\leq r\leq 6$. 
If $r=0$, then the linear system $|6H'+2\pi^*H|$ is free. 
We consider the following short exact sequence:  
\begin{align}\label{shiki2}
0&\to \mathcal O_Z(6H'+2\pi^*H+(r-1)C')\to 
\mathcal O_Z(6H'+2\pi^*H+rC')\tag{$\clubsuit$}
\\&\to \mathcal O_{C'}
(6H'+2\pi^*H+rC')\to 0. \notag
\end{align}
Note that 
\begin{align*}
6H'+2\pi^*H+(r-1)C'-K_Z&\sim 6H'+2\pi^*H-E+rC'\\
&=7H'+\pi^*H+rC'
\end{align*}
is nef and big for $1\leq r\leq 6$. 
Therefore, by the Kawamata--Viehweg vanishing theorem, 
we obtain $$
H^1(Z, \mathcal O_Z(6H'+2\pi^*H+(r-1)C'))=0
$$ 
for $1\leq r\leq 6$. On the other hand, 
$$
\deg \mathcal O_{C'}(6H'+2\pi^*H+rC')\geq 6
$$ 
for $1\leq r\leq 6$. 
Thus $|\mathcal O_{C'}(6H'+2\pi^*H+rC')|$ is very ample 
for $1\leq r\leq 6$. Note that $C'$ is an elliptic curve. 
By considering the long exact sequence associated to 
\eqref{shiki2} and by induction on $r$, 
we obtain that $|6H'+2\pi^*H+rC'|$ is free for $0\leq r\leq 6$. 
In particular, $|4\pi^*H+4H'+6C'-2E|$ is free. 
\end{proof}
We take a general member $C_0$ of the free linear system 
$|4\pi^*H+4H'+6C'-2E|$ and take the 
double cover $g:Y\to Z$ ramified along $C_0$. 
Then we have 
\begin{align*}
K_Y&=g^*(K_Z+2\pi^*H+2H'+3C'-E)\\
&\sim g^*(2\pi^*H+2H'+2C'). 
\end{align*}
Note that $|g^*H'|$ is free on a smooth projective 
surface $Y$ such that $\kappa(Y, g^*H')=1$. 
Then we can take $s_1, s_2\in H^0(Y, \mathcal O_Y(g^*H'))\setminus \{0\}$ such that 
$s_1$ and $s_2$ have no common zeros. 
By using $s_1$ and $s_2$, we can construct the analytic family of tori 
$f:X\to Y$ as in Example \ref{ex-atiyah}, that is, 
$$
X=\mathbb V(\mathcal O_Y(g^*H')\oplus \mathcal O_Y(g^*H'))/\Lambda
$$ 
and 
$$
\Lambda=\left< E\begin{pmatrix}s_1\\ s_2\end{pmatrix}, \  
I\begin{pmatrix}s_1\\ s_2\end{pmatrix}, \ 
J\begin{pmatrix}s_1\\ s_2\end{pmatrix}, \ 
K\begin{pmatrix}s_1\\ s_2\end{pmatrix}
\right>.  
$$
Then $X$ is a compact complex $4$-fold. 
By \cite[Proposition 10]{atiyah}, we can 
check that 
\begin{align*}
\omega_X&=f^*\mathcal O_Y(K_Y-2g^*H')\\
&=f^*\mathcal O_Y(g^*(2\pi^*H+2H'+2C')-2g^*H')\\&\simeq 
f^*\mathcal O_Y(g^*(2\pi^*H+2C')). 
\end{align*}
Therefore, if the canonical ring $R(X)=\bigoplus _{m\geq 0}
H^0(X, \omega_X^{\otimes m})$ is a finitely generated $\mathbb C$-algebra, 
then so is 
$$R(Z, 2\pi^*H+2C')=\bigoplus _{m\geq 0}H^0(Z, \mathcal O_Z(2m(\pi^*H+C'))).$$ 
\begin{cla}\label{claim3} 
$R(Z, 2\pi^*H+2C')=\bigoplus _{m\geq 0}H^0(Z, \mathcal O_Z(2m(\pi^*H+C')))$ 
is not a finitely generated $\mathbb C$-algebra. 
\end{cla}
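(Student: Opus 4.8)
The plan is to exploit the fact, supplied by Claim \ref{claim1}, that the prime divisor $C'$ is the fixed part of $|n(\pi^*H+C')|$ with multiplicity exactly $1$ for every $n\ge 1$, together with the multiplicativity of the order of vanishing along $C'$. Write $L=\pi^*H+C'$, so that the $m$-th graded piece of $R(Z,2\pi^*H+2C')$ is $V_{2m}=H^0(Z,\mathcal O_Z(2mL))$. For a nonzero section $\phi\in H^0(Z,\mathcal O_Z(kL))$ let $v(\phi)=\mult_{C'}(\phi)\in\mathbb Z_{\ge 0}$ denote the order of vanishing of $\phi$ along $C'$; this is additive under multiplication, $v(\phi\psi)=v(\phi)+v(\psi)$, and satisfies $v(\sum_i\phi_i)\ge\min_i v(\phi_i)$. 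This is exactly the Zariski--Cutkosky mechanism for non-finite-generation of section rings, and the whole point is that $v$ is forced to grow under products while $V_{2m}$ always contains sections of the minimal value $v=1$.

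First I would record two consequences of Claim \ref{claim1}. Since $\Bs|nL|=C'$ for every $n\ge 1$, every nonzero section of $H^0(Z,\mathcal O_Z(nL))$ vanishes along $C'$, so $v(\phi)\ge 1$ whenever $\deg\phi=n\ge 1$. On the other hand, Claim \ref{claim1} gives $|nL|=|nL-C'|+C'$ with $|nL-C'|=|n\pi^*H+(n-1)C'|$ base point free; hence a general member of $|nL-C'|$ does not contain $C'$, and multiplying such a section by the canonical section $s\in H^0(Z,\mathcal O_Z(C'))$ defining $C'$ produces an element of $H^0(Z,\mathcal O_Z(nL))$ with $v=1$. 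Thus for every $n\ge 1$ there is a section in $V_n$, in particular in $V_{2m}$ for $n=2m$, of order exactly $1$ along $C'$.

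Then I would argue by contradiction. If $R(Z,2\pi^*H+2C')$ were finitely generated, it would be generated by homogeneous elements of $m$-degree at most some $N$, i.e.\ by elements of $\bigoplus_{m=1}^{N}V_{2m}$. Any element of $m$-degree $m$ in the algebra they generate is then a $\mathbb C$-linear combination of products $\phi_1\cdots\phi_k$ with $\phi_j\in V_{2m_j}$, $1\le m_j\le N$ and $\sum_j m_j=m$, so that $k\ge\lceil m/N\rceil$; by the two displayed properties of $v$ each such product has $v\ge k\ge\lceil m/N\rceil$, and hence so does every linear combination of them. Choosing $m>N$ forces $\lceil m/N\rceil\ge 2$, contradicting the existence of an element of $V_{2m}$ with $v=1$. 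Therefore $R(Z,2\pi^*H+2C')$ is not a finitely generated $\mathbb C$-algebra.

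The genuinely delicate input is not this bookkeeping but Claim \ref{claim1} itself, which is where the genericity of $P_1,\dots,P_{12}$ --- equivalently, the non-torsion of $\mathcal O_{C'}(\pi^*H+C')$ in $\mathrm{Pic}^0(C')$ --- enters to guarantee that $C'$ persists as a fixed component of $|n(\pi^*H+C')|$ in every degree. Once that is granted, the only thing to watch is that the order along $C'$ is strictly additive under multiplication, so that high-degree sections of minimal order can never be reached from finitely many generators.
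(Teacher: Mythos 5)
Your proof is correct and takes essentially the same route as the paper: the paper's proof of Claim \ref{claim3} consists precisely of observing that, by Claim \ref{claim1}, the multiplication maps from lower-degree graded pieces into $H^0(Z,\mathcal O_Z(2m(\pi^*H+C')))$ are never surjective for any $m\geq 1$, and your valuation argument with $v=\mult_{C'}$ (every positive-degree section has $v\geq 1$ since $\Bs|nL|=C'$, so products of at least two factors have $v\geq 2$, while freeness of $|nL-C'|$ gives a section with $v=1$ in every degree) is exactly the bookkeeping that the paper leaves implicit behind that non-surjectivity. The only difference is that the paper also records a second, independent proof via the equivalence of finite generation with semi-ampleness of the nef and big divisor $2\pi^*H+2C'$, which your argument does not use.
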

\begin{proof}[Proof of Claim \ref{claim3}] 
By Claim \ref{claim1}, 
\begin{align*}
\bigoplus _{a>0}^{m-1}
&H^0(Z, \mathcal O_Z(2a(\pi^*H+C')))
\otimes H^0(Z, \mathcal O_Z(2(m-a)(\pi^*H+C')))
\\ &\to H^0(Z, \mathcal O_Z(2m(\pi^*H+C')))
\end{align*}
is not surjective for 
any $m\geq 1$. This implies that $R(Z, 2\pi^*H+2C')$ is not a finitely generated 
$\mathbb C$-algebra. 
\end{proof}
\begin{proof}[Alternative proof of Claim \ref{claim3}] 
Since $2\pi^*H+2C'$ is nef and big, we know that 
$R(Z, 2\pi^*H+2C')$ is a finitely generated 
$\mathbb C$-algebra if and only if $2\pi^*H+2C'$ is semi-ample (see, for example, 
\cite[Theorem 2.3.15]{lazarsfeld}). 
On the other hand, $\mathcal O_{C'}(\pi^*H+C')$ is not a torsion element 
in $\Pic^0(C')$. 
This implies that $\pi^*H+C'$ is not semi-ample. 
Therefore, $R(Z, 2\pi^*H+2C')$ is not a finitely generated $\mathbb C$-algebra.
\end{proof}
Therefore, the canonical ring $R(X)$ of $X$ is not finitely generated 
as a $\mathbb C$-algebra. 
Since $f_*\omega_{X/Y}\simeq \mathcal O_Y(-2g^*H')$ is not 
nef, $X$ is non-K\"ahler. 
Note that $X$ is a compact complex manifold which is not in Fujiki's 
class $\mathcal C$ by Theorem \ref{thm31}. 
\end{ex}

Example \ref{ex-wilson} shows that 
the finite generation of canonical rings 
does not always hold for compact complex manifolds which are not in 
Fujiki's class $\mathcal C$. 

\begin{rem}
Wilson's original example (see \cite[Example 4.3]{wilson}) 
uses the fact that $nH'+(n-1)C'$ is very ample for all $n\geq 1$ 
(see \cite[Claim in Example 4.3]{wilson}). 
Since $H'$ is not a big divisor, the statement in 
\cite[Claim in Example 4.3]{wilson} has to be changed suitably. 
So, we modified his construction slightly. 
Note that $f:X\to Y$ constructed in Example \ref{ex-wilson} 
does not coincide with Wilson's original 
example $V\to \widetilde S$ in \cite[Example 4.3]{wilson}.  
\end{rem}

Example \ref{ex-wilson} also shows that 
there are no generalizations of the abundance conjecture for 
compact complex non-K\"ahler manifolds. 

\begin{rem}
In Example \ref{ex-wilson}, we can check that $\pi^*H+C'$ is nef 
and big. Therefore, $\omega_X$ is a pull-back of 
a nef and big line bundle on a smooth projective variety $Y$. 
So $X$ should be recognized to be a minimal model. 
However, $\omega_X$ is not semi-ample. 
This means that 
the abundance conjecture can not be generalized 
for compact complex manifolds which are not 
in Fujiki's class $\mathcal C$. 
\end{rem}

We close this section with a comment on Moriwaki's result (see \cite{moriwaki}). 

\begin{rem}
Let $X$ be a three-dimensional compact complex manifold. 
Moriwaki proved that 
the canonical ring $R(X)$ of $X$ is always a finitely generated 
$\mathbb C$-algebra even when $X$ is not K\"ahler 
(see \cite[(3.5) Theorem]{moriwaki}). 
\end{rem}

\section{Appendix}\label{sec-appendix}

In this appendix, we quickly discuss the minimal model program for 
log canonical pairs and describe some 
related examples by J\'anos Koll\'ar 
for the reader's convenience. 
We assume that all the varieties are algebraic throughout 
this section. 

\subsection{Minimal model program for log canonical pairs}

Let $\pi:(X, \Delta)\to S$ be a projective morphism such that 
$(X, \Delta)$ is a $\mathbb Q$-factorial log canonical pair. 
Then we can run the minimal model 
program on $(X, \Delta)$ over $S$ since we have the cone and contraction 
theorem (see, for example, \cite[Theorem 1.1]{fujino6}) 
and the flip theorem for log canonical pairs (see \cite[Corollary 1.2]{birkar} 
and \cite[Corollary 1.8]{hacon-xu}). 
We can also run the minimal model program on $(X, \Delta)$ over $S$ with 
scaling by \cite[Theorem 1.1]{fujino6}. 
Unfortunately, we do not know if the minimal model program 
terminates or not. 

\begin{conj}[Flip conjecture II]\label{conj61} 
A sequence of flips 
$$
(X_0, \Delta_0)\dashrightarrow 
(X_1, \Delta_1)\dashrightarrow (X_2, \Delta_2)\dashrightarrow 
\cdots
$$
terminates after finitely many steps. Namely, 
there exists no infinite sequence of flips. 
\end{conj}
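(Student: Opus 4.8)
\emph{The final statement is the (well-known, open) termination conjecture for log canonical flips, so what follows is a strategy rather than a proof.} The natural plan is to attach to each $(X_i, \Delta_i)$ a non-negative integer-valued invariant that never increases along the sequence and must strictly drop infinitely often, forcing the sequence to stop. The classical candidate is Shokurov's difficulty function: over the fixed base $S$ one counts the exceptional divisors $E$ over $X_i$ whose discrepancy lies below a fixed threshold,
$$
d(X_i, \Delta_i)=\#\{E\mid a(E, X_i, \Delta_i)<s\},
$$
for a suitable $s$. First I would check that $d$ is finite and that, because a flip can only raise discrepancies, $d$ is non-increasing in $i$; the flipped curves strictly increase the discrepancy of at least one divisor centered on the flipping locus, which is the mechanism that should make $d$ eventually decrease.

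The second ingredient is \emph{special termination}: granting the conjecture in all dimensions $<\dim X$, one shows that after finitely many flips the flipping and flipped loci become disjoint from $\lfloor\Delta_i\rfloor$ and from every log canonical center of $(X_i, \Delta_i)$. Adjunction then converts the behaviour along those centers into sequences of flips in strictly smaller dimension, which terminate by the inductive hypothesis. After this reduction the remaining flips take place in the kawamata log terminal locus, where the difficulty function of the first step is an honest integer and the argument closes. Carrying out special termination itself requires that the number of log canonical centers be finite and stabilize and that the restricted boundaries keep satisfying the relevant ascending chain condition on coefficients.

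The hard part, and precisely the reason the statement remains only a conjecture, is guaranteeing that $d$ really is integral and really must decrease. For log canonical, as opposed to kawamata log terminal, pairs the discrepancies can equal $-1$, so over the log canonical centers there may be infinitely many divisors $E$ with $a(E, X_i, \Delta_i)$ at the boundary of the allowed range, and these are not controlled by any presently available finiteness. What would make $d$ well-behaved is the \emph{ascending chain condition for minimal log discrepancies} in the ambient dimension, which is open. I therefore expect no unconditional argument at this level of generality with current methods: the realistic outcomes are termination with scaling when $K_{X_i}+\Delta_i$ is big over $S$ (reducing to finiteness of models as in \cite{bchm}) and an otherwise conditional theorem depending on the ACC conjecture for minimal log discrepancies, built on the cone, contraction, and flip theorems already available for log canonical pairs (see \cite[Theorem 1.1]{fujino6} and \cite[Corollary 1.2]{birkar}).
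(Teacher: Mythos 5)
The statement is an open conjecture: the paper offers no proof, saying only that, by the existence of dlt blow-ups and the special termination theorem, Conjecture \ref{conj61} in dimension $n$ reduces by induction to the kawamata log terminal case in dimension $\leq n$. You correctly recognize the statement as open, and your second paragraph (special termination plus adjunction reducing everything to the klt locus) is precisely the reduction the paper records, so your assessment --- including the honest identification of why the difficulty-function argument breaks down at discrepancy $-1$ for genuinely log canonical pairs --- matches the paper's own position.
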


Note that each flip in Conjecture \ref{conj61} 
is a flip described in Case \ref{case2} in the introduction. 

If Conjecture \ref{conj61} is true, then we can freely 
use the minimal model program in full generality.  
In order to prove Conjecture \ref{conj61} in dimension $n$, 
it is sufficient to solve Conjecture \ref{conj61} for kawamata 
log terminal pairs in dimension $\leq n$. 
This reduction is an easy consequence of the existence of 
dlt blow-ups and 
the special termination theorem by induction on the dimension. 
For the details, see \cite{fujino-special} and \cite{fujino-foundation}. 

More generally, by the cone and contraction theorem (see \cite[Theorem 1.1]{fujino6}),  
\cite[Theorem 1.1]{birkar} and \cite[Theorem 1.6]{hacon-xu}, 
we can run the minimal model 
program for non-$\mathbb Q$-factorial 
log canonical pairs (see \cite[Subsection 3.1.2]{fujino3} and 
\cite{fujino-foundation}). 
Note that the termination of flips in this 
more general setting also follows from Conjecture \ref{conj61} for 
kawamata log terminal pairs 
by using the existence of dlt blow-ups and the special termination theorem 
as explained above (see \cite{fujino-special} and \cite{fujino-foundation}). 

Anyway, Conjecture \ref{conj61} for $\mathbb Q$-factorial 
kawamata log terminal pairs 
is one of the most important open problems for the minimal 
model program. 

\subsection{On log canonical flops} 

In this subsection, we discuss some examples, which 
show the differences between kawamata log terminal pairs and 
log canonical pairs. The following result is well known to the 
experts (see, for example, \cite[Theorem 3.24]{fujino3}). 

\begin{thm}\label{thm61}
Let $(X, \Delta)$ be a kawamata log terminal pair and let $D$ be 
a $\mathbb Q$-divisor on $X$. 
Then $\bigoplus _{m\geq 0}\mathcal O_X(\lfloor mD\rfloor)$ is a finitely generated 
$\mathcal O_X$-algebra. 
\end{thm}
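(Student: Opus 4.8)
The assertion is local on $X$, so I would begin by shrinking $X$ and assuming that $X=\operatorname{Spec}A$ is affine; it then suffices to show that the graded $A$-algebra $\bigoplus_{m\geq 0}H^0(X,\mathcal O_X(\lfloor mD\rfloor))$ is finitely generated. The entire difficulty lies in the fact that $D$ need not be $\mathbb Q$-Cartier, since a kawamata log terminal pair need not be $\mathbb Q$-factorial. Once $D$ is $\mathbb Q$-Cartier the statement becomes essentially formal, so the plan is to reduce to that situation.

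First I would pass to a small projective $\mathbb Q$-factorialization $\mu\colon X'\to X$, which exists for the klt variety $X$ by \cite{bchm}. Thus $X'$ is $\mathbb Q$-factorial and klt, $\mu$ is an isomorphism in codimension one, and the proper transform $D':=\mu^{-1}_*D$ is $\mathbb Q$-Cartier on $X'$. Writing $K_{X'}+\Delta'=\mu^*(K_X+\Delta)$ with $\Delta'=\mu^{-1}_*\Delta$, the pair $(X',\Delta')$ is again kawamata log terminal because $\mu$ is small. Since $\mathcal O_{X'}(\lfloor mD'\rfloor)$ and $\mathcal O_X(\lfloor mD\rfloor)$ are reflexive and agree in codimension one, we have $\mu_*\mathcal O_{X'}(\lfloor mD'\rfloor)\simeq \mathcal O_X(\lfloor mD\rfloor)$, hence $H^0(X',\mathcal O_{X'}(\lfloor mD'\rfloor))=H^0(X,\mathcal O_X(\lfloor mD\rfloor))$ for every $m$. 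It is therefore enough to prove that $\bigoplus_{m\geq 0}H^0(X',\mathcal O_{X'}(\lfloor mD'\rfloor))$ is a finitely generated $A$-algebra, for the projective morphism $X'\to\operatorname{Spec}A$ with $D'$ now $\mathbb Q$-Cartier.

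Next I would absorb $D'$ into the boundary and invoke \cite{bchm}. After shrinking $\operatorname{Spec}A$ further I may trivialize the invertible sheaf $\mathcal O_X(r(K_X+\Delta))$ for a suitable index $r$, so that $K_{X'}+\Delta'\sim_{\mathbb Q}0$ on $X'$. If $H^0(X',\mathcal O_{X'}(\lfloor mD'\rfloor))=0$ for all $m\geq 1$ the algebra reduces to $A$ and there is nothing to prove. Otherwise I choose a general effective $G\sim_{\mathbb Q}D'$ together with a small rational $\delta>0$ such that $(X',\Delta'+\delta G)$ is still kawamata log terminal. Then
$$K_{X'}+\Delta'+\delta G\sim_{\mathbb Q}\delta G\sim_{\mathbb Q}\delta D',$$
so that the $D'$-section ring coincides, up to passing to a Veronese subalgebra, with the adjoint ring $\bigoplus_{m\geq 0}H^0(X',\mathcal O_{X'}(\lfloor m(K_{X'}+\Delta'+\delta G)\rfloor))$ of the klt pair $(X',\Delta'+\delta G)$, projective over the affine base $\operatorname{Spec}A$. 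This ring is finitely generated by \cite{bchm}, and finite generation is insensitive to passing to or from a Veronese subalgebra, so $\bigoplus_{m\geq 0}H^0(X',\mathcal O_{X'}(\lfloor mD'\rfloor))$, and therefore the original sheaf algebra, is finitely generated.

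The genuinely nonformal input is exactly the finite generation theorem of \cite{bchm}: running a relative minimal model program for the klt pair over $\operatorname{Spec}A$ is what makes $D'$ relatively semi-ample and produces the finitely generated adjoint ring. I expect the main obstacle, apart from citing \cite{bchm}, to be the preliminary reduction to the $\mathbb Q$-Cartier case: it is precisely the smallness of $\mu$ (an isomorphism in codimension one) that lets the reflexive sheaves $\mathcal O_{X'}(\lfloor mD'\rfloor)$ and $\mathcal O_X(\lfloor mD\rfloor)$, and their spaces of global sections, match exactly, and this is the point I would check most carefully before allowing the $\sim_{\mathbb Q}$-manipulations on $X'$.
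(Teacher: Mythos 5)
Your proposal is correct, and its first half is exactly the paper's reduction: the small $\mathbb Q$-factorialization $\mu\colon X'\to X$ supplied by \cite{bchm}, the observation that $(X',\Delta')$ stays kawamata log terminal because $\mu$ is small, and the identification $\mu_*\mathcal O_{X'}(\lfloor mD'\rfloor)\simeq \mathcal O_X(\lfloor mD\rfloor)$ via reflexivity, which you rightly flag as the point to check. You diverge only at the endgame. The paper stays on $X'$: after first replacing $D$ by an effective $D'\sim D$ (possible on the affine chart), it runs the $(K_{X'}+\Delta'+\varepsilon D')$-minimal model program with scaling over $X$ --- every step is $D'$-negative since $K_{X'}+\Delta'$ is relatively trivial --- ends with $D'$ nef over $X$, and applies the relative basepoint-free theorem to make $D'$ semi-ample over $X$, from which finite generation of $\bigoplus_m \mu_*\mathcal O_{X'}(\lfloor mD'\rfloor)$ is immediate. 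You instead trivialize $K_X+\Delta$ locally, absorb an effective $G\sim_{\mathbb Q}D'$ into the boundary with $\delta$ below the log canonical threshold, and quote finite generation of the relative adjoint ring from \cite{bchm}; this is legitimate, and in fact cleaner than it might look, because $X'\to \operatorname{Spec}A$ is \emph{birational}, so the bigness-over-the-base hypothesis in the BCHM finite generation theorem is automatic and no canonical bundle formula argument is needed. The two endgames are equivalent in substance --- the paper's MMP-plus-basepoint-free run is essentially the proof of the statement you invoke --- but the paper's route buys a model on which $D'$ is relatively semi-ample, while yours is shorter at the price of a larger black box. Two minor remarks: your fallback case $H^0(X',\mathcal O_{X'}(\lfloor mD'\rfloor))=0$ for all $m\geq 1$ never occurs over an affine base (a regular function vanishing to high order along the negative part of $D$ always produces a section), though including it is harmless; and your Veronese manipulations are indeed standard for divisorial algebras of this shape (cf.\ \cite{birkar-div}).
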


\begin{proof}
If $D$ is $\mathbb Q$-Cartier, then this theorem is obvious. 
So we assume that $D$ is not $\mathbb Q$-Cartier. 
Since the statement is local, we may assume that 
$X$ is affine. By replacing $D$ with $D'$ such that 
$D'\sim D$ and $D'\geq 0$, we may further assume that $D$ is effective. 
By \cite{bchm}, we can take a small projective birational morphism $f:Y\to X$ such that 
$Y$ is $\mathbb Q$-factorial, $K_Y+\Delta_Y=f^*(K_X+\Delta)$, and 
$(Y, \Delta_Y)$ is kawamata log terminal. 
Let $D_Y$ be the strict transform of $D$ on $Y$. 
Note that $D_Y$ is $\mathbb Q$-Cartier because $Y$ is $\mathbb Q$-factorial. 
Let $\varepsilon$ be a small positive number. 
By running the minimal model program on $(Y, \Delta_Y+\varepsilon D_Y)$ 
over $X$ with scaling, we may assume that $D_Y$ is $f$-nef. 
Then, by the basepoint-free theorem, 
$D_Y$ is $f$-semi-ample. Therefore, 
$$
\bigoplus _{m\geq 0}f_*\mathcal O_Y(\lfloor mD_Y\rfloor)
$$ 
is a finitely generated $\mathcal O_X$-algebra. 
Since we have an $\mathcal O_X$-algebra isomorphism 
$$
\bigoplus _{m\geq 0}f_*\mathcal O_Y(\lfloor mD_Y\rfloor)\simeq \bigoplus 
_{m\geq 0}\mathcal O_X(\lfloor mD\rfloor), 
$$ 
$\bigoplus _{m\geq 0}\mathcal O_X(\lfloor mD\rfloor)$ is a finitely generated 
$\mathcal O_X$-algebra. 
\end{proof}

The next example shows that 
Theorem \ref{thm61} does not always hold for log canonical pairs. 
In other words, 
if $(X, \Delta)$ is log canonical, then $\bigoplus _{m\geq 0}
\mathcal O_X(\lfloor mD\rfloor)$ 
is not necessarily finitely generated as 
an $\mathcal O_X$-algebra. 

\begin{ex}[{\cite[Exercise 95]{kollar3}}]\label{exe88} 
Let $E\subset \mathbb P^2$ be a smooth cubic 
curve. 
Let $S$ be a surface obtained by blowing up nine 
sufficiently general points on $E$ and let $E_S\subset S$ be 
the strict transform of $E$. Let $H$ be a very 
ample divisor on $S$ giving 
a projectively normal embedding $S\subset \mathbb P^N$. 
Let $X\subset \mathbb A^{N+1}$ be the cone 
over $S$ and let $D\subset X$ be the cone over $E_S$. 
Then $(X, D)$ is log canonical by Lemma \ref{lem63} below 
since $K_S+E_S\sim 0$. 
Let $P\in D\subset X$ be the vertex 
of the cones $D$ and $X$. 
Since $X$ is normal, we have 
\begin{align*}
H^0(X, \mathcal O_X(mD)) &=H^0(X\setminus P, \mathcal O_X(mD))
\\& \simeq \bigoplus _{r\in \mathbb Z}H^0(S, 
\mathcal O_S(mE_S+rH)). 
\end{align*} 
By construction, 
$\mathcal O_S(mE_S)$ has only the obvious section which 
vanishes 
along $mE_S$ for every $m>0$. 
It can be checked by induction on $m$ 
using the following exact sequence 
\begin{align*}
0&\to H^0(X, \mathcal O_S((m-1)E_S))\to 
H^0(S, \mathcal O_S(mE_S))\\ &\to H^0(E_S, \mathcal 
O_{E_S}(mE_S))\to \cdots 
\end{align*}
since $\mathcal O_{E_S}(E_S)$ is not a torsion element 
in $\Pic ^0(E_S)$. 
Therefore, 
$$H^0(S, \mathcal O_S(mE_S+rH))=0$$ for every $r<0$. So, we 
have 
$$
\bigoplus _{m\geq 0} \mathcal O_X(mD)
\simeq \bigoplus _{m\geq 0}\bigoplus _{r\geq 0} 
H^0(S, \mathcal O_S(mE_S+rH)). 
$$ 
Since $E_S$ is nef, $\mathcal O_S(mE_S+4H)\simeq 
\mathcal O_S(K_S+E_S+mE_S+4H)$ is very 
ample for every $m\geq 0$. 
Therefore, by replacing $H$ with 
$4H$, we may assume that 
$\mathcal O_S(mE_S+rH)$ is very ample for 
every $m\geq 0$ and every $r>0$. 
In this setting, the multiplication maps 
\begin{align*}
\bigoplus _{a=0}^{m-1}H^0(S, \mathcal O_S(aE_S+H))\otimes 
H^0(S, \mathcal O_S((m-a)E_S))
\\ \to H^0(S, \mathcal O_S(mE_S+H))
\end{align*} 
are never surjective.  
This implies that 
$\bigoplus _{m\geq 0}\mathcal O_X(mD)$ 
is not finitely generated as an 
$\mathcal O_X$-algebra. 
\end{ex}

Let us recall an easy lemma for the reader's convenience. 

\begin{lem}\label{lem63} 
Let $(V, \Delta)$ be a log canonical pair such that $V$ is smooth, 
$\Supp \Delta$ is a simple normal crossing divisor on $V$, and 
$K_V+\Delta\sim _{\mathbb Q}0$. 
Let $V\subset \mathbb P^N$ be a projectively normal embedding. 
Let $W\subset \mathbb A^{N+1}$ be the cone over 
$V$ and let $\Delta_W$ be the cone over $\Delta$. 
Then $(W, \Delta_W)$ is log canonical. 
\end{lem}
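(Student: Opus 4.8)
The plan is to reduce everything to a single explicit blow-up of the vertex. Write $H$ for the very ample divisor with $\mathcal O_V(1)\cong \mathcal O_V(H)$, so that $W=\operatorname{Spec}\bigoplus_{m\geq 0}H^0(V,\mathcal O_V(mH))$ and the vertex is the unique point $P$ lying over the origin. First I would blow up $P$; the resulting variety is the total space $\widetilde W=\mathbb V(\mathcal O_V(-H))$ of the line bundle $\mathcal O_V(-H)$, with projection $p\colon \widetilde W\to V$ and a proper birational contraction $\mu\colon \widetilde W\to W$ sending the zero section $E$ (which is $\cong V$) to $P$ and being an isomorphism elsewhere. Since $V$ is smooth, $\widetilde W$ is smooth; the strict transform of $\Delta_W$ is the vertical divisor $p^*\Delta$, and because $\Supp\Delta$ is simple normal crossing while $p$ is smooth and $E$ is a section meeting the $p^*\Delta_i$ transversally, the divisor $p^*\Delta+E$ has simple normal crossing support. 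As $(V,\Delta)$ is log canonical with $\Delta$ effective, all coefficients of $p^*\Delta+E$ lie in $[0,1]$, so $(\widetilde W,p^*\Delta+E)$ is log smooth and in particular log canonical.

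The key computation is to identify $\mu^*(K_W+\Delta_W)$ and the discrepancy of $E$. I would use the two standard facts for the total space of a line bundle: the relative canonical relation $K_{\widetilde W}\sim p^*(K_V+H)$ (coming from $\omega_{\widetilde W/V}\cong p^*\mathcal O_V(H)$), and the tautological section identity $\mathcal O_{\widetilde W}(E)\cong p^*\mathcal O_V(-H)$, i.e.\ $p^*H\sim -E$. Combining these with the hypothesis $K_V+\Delta\sim_{\mathbb Q}0$ gives
\[
K_{\widetilde W}+p^*\Delta+E\sim_{\mathbb Q}p^*(K_V+\Delta+H)+E\sim_{\mathbb Q}p^*H+E\sim_{\mathbb Q}0 .
\]
Pushing this $\mathbb Q$-linear triviality down by $\mu$ (equivalently, writing $m(K_{\widetilde W}+p^*\Delta+E)=\operatorname{div}(\varphi)$ for suitable $m$ and transporting $\varphi$ to $W$) shows first that $K_W+\Delta_W$ is $\mathbb Q$-Cartier, and in fact $K_W+\Delta_W\sim_{\mathbb Q}0$, and second that $\mu^*(K_W+\Delta_W)=K_{\widetilde W}+p^*\Delta+E$. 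Reading off the coefficient of $E$, this says the discrepancy $a(E,W,\Delta_W)$ equals $-1$ and that $\mu$ is crepant.

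With the crepant relation in hand the conclusion is immediate: for every divisor $F$ over $W$ we have $a(F,W,\Delta_W)=a(F,\widetilde W,p^*\Delta+E)$, and the right-hand side is $\geq -1$ because $(\widetilde W,p^*\Delta+E)$ is log smooth. Hence $(W,\Delta_W)$ is log canonical. I expect the main obstacle to be the descent step—confirming that $K_W+\Delta_W$ is genuinely $\mathbb Q$-Cartier (so that discrepancies are even defined) and that the discrepancy of the exceptional divisor $E$ lands exactly at $-1$ rather than something smaller. Everything hinges on the cancellation $p^*(K_V+\Delta+H)+E\sim_{\mathbb Q}p^*H+E\sim_{\mathbb Q}0$, which uses both $K_V+\Delta\sim_{\mathbb Q}0$ and the tautological identity $p^*H\sim -E$; a sign error in either would destroy the argument, so that is where I would be most careful.
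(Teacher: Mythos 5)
Your proposal is correct and follows essentially the same route as the paper: blow up the vertex, observe that the exceptional divisor $E\simeq V$ appears with coefficient exactly $1$ in the crepant pullback $g^*(K_W+\Delta_W)=K_{W'}+\Delta_{W'}+E$, and conclude from the log smoothness of the pair upstairs. The only difference is that you carefully justify the identity the paper dismisses with ``we can check that''---namely the $\mathbb Q$-Cartier descent and the crepant relation, via $\omega_{\widetilde W/V}\simeq p^*\mathcal O_V(H)$ and $p^*H\sim -E$---and those computations are accurate.
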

\begin{proof}
Let $g:W'\to W$ be the blow-up at $0\in \mathbb A^{N+1}$ 
and let $E$ be the exceptional divisor of $g$.  
Note that $W'$ is smooth and $E\simeq V$. 
Then we can check that 
$$
K_{W'}+\Delta_{W'}+E=g^*(K_W+\Delta_W)
$$ 
where $\Delta_{W'}$ is the strict transform of $\Delta_W$. 
Note that $\Supp (\Delta_{W'}+E)$ is a simple normal crossing 
divisor on $W'$. Thus, $(W, \Delta_W)$ is log canonical. 
\end{proof}

Let us recall the definition of log canonical flops.  

\begin{defn}[Log canonical flop]\index{log canonical 
flop}\label{lc-flop-def}
Let $(X, \Delta)$ be a log canonical pair. Let $D$ be a 
Cartier divisor on $X$. 
Let $f:X\to Y$ be 
a small contraction such that 
$K_X+\Delta$ is numerically $f$-trivial and 
$-D$ is $f$-ample. 
The opposite of $f$ with respect to $D$ is called 
a {\em{flop with respect to $D$ for  
$(X, \Delta)$}} or simply a {\em{$D$-flop}}. 
We sometimes call it 
{\em{flop}} or a {\em{log canonical flop}} if there is no risk 
of confusion. 
\end{defn} 

\begin{rem}\label{rem66} 
Without loss of generality, we may assume that 
$\Delta$ is a $\mathbb Q$-divisor and $K_X+\Delta\sim _{\mathbb Q, f}0$ 
in Definition \ref{lc-flop-def} (see, for example, 
Remark \ref{rem24} and \cite[Theorem 4.9 and Subsection 4.1]{fujino-gongyo2}). 
Furthermore, if $(X, \Delta+\varepsilon D)$ is log canonical 
for some positive number $\varepsilon$, then a $D$-flop 
always exists by \cite[Theorem 1.1 and 
Corollary 1.2]{birkar} and \cite[Theorem 1.6 and Corollary 1.8]{hacon-xu}. 
\end{rem}

The following example shows that 
log canonical flops do not always exist. 
Of course, flops always exist for kawamata log terminal pairs by \cite{bchm}. 

\begin{ex}[{\cite[Exercise 96]{kollar3}}]\label{exe89} 
Let $E$ be an elliptic curve and let $L$ be a degree 
zero line bundle on $E$. 
We put $$S=\mathbb P_E(\mathcal O_E\oplus L).$$ Let 
$C_1$ and $C_2$ be the sections of the $\mathbb P^1$-bundle 
$p:S\to E$. 
We note that $K_S+C_1+C_2\sim 0$. 
As in Example \ref{exe88}, 
we take a sufficiently ample divisor 
$H=aF+bC_1$ on $S$ giving a projectively normal embedding 
$S\subset 
\mathbb P^N$, where $F$ is a fiber of the $\mathbb P^1$-bundle 
$p:S\to E$, $a>0$, and $b>0$. 
We may assume that $\mathcal O_S(mC_i+rH)$ is very ample 
for $i=1, 2$, every $m\geq 0$, and every $r>0$. 
Moreover, we may assume that 
$\mathcal O_S(M+rH)$ is 
very ample for any nef Cartier divisor $M$ and 
every $r>0$. 
Let $X\subset \mathbb A^{N+1}$ be the cone over $S$ and 
let $D_i\subset X$ be the cones over 
$C_i$ for $i=1$ and $2$. 
Since $K_S+C_1+C_2\sim 0$, $(X, D_1+D_2)$ is log canonical 
by Lemma \ref{lem63}. 
We can check $K_X+D_1+D_2\sim 0$ by construction. 
By the same arguments as in Example \ref{exe88}, 
we can prove the following statement. 
\setcounter{cla}{0}
\begin{cla}\label{exe-cl1}
If $L$ is a non-torsion element in 
$\Pic ^0(E)$, then 
$$\bigoplus _{m\geq 0} \mathcal O_X(mD_i)$$ is not 
a finitely generated $\mathcal O_X$-algebra 
for $i=1$ and $2$. 
\end{cla}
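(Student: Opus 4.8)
The plan is to transcribe the argument of Example~\ref{exe88} almost verbatim, with the section $C_i$ playing the role of the elliptic curve $E_S$. Fix $i\in\{1,2\}$ and write $C=C_i$, $D=D_i$, and let $P$ be the vertex of the cone $X$. Since $S\subset\mathbb P^N$ is projectively normal, $X$ is normal, so I would first record
$$
H^0(X,\mathcal O_X(mD))=H^0(X\setminus P,\mathcal O_X(mD))\simeq\bigoplus_{r\in\mathbb Z}H^0(S,\mathcal O_S(mC+rH)),
$$
the grading coming from the $\mathbb G_m$-action on the punctured cone $X\setminus P$ over $S$, exactly as in Example~\ref{exe88}.

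The decisive step is to show that for every $m>0$ the space $H^0(S,\mathcal O_S(mC))$ is one-dimensional, spanned by the section vanishing along $mC$. This is the single place where the hypothesis that $L$ is non-torsion is used. Since $C$ is a section of the splitting $\mathcal O_E\oplus L$, one has $C\cong E$ and $\mathcal O_C(C)=N_{C/S}\cong L^{\pm1}$, a degree-zero line bundle on the elliptic curve $C$; it is non-torsion precisely because $L$ is. Hence $H^0(C,\mathcal O_C(mC))=H^0(C,L^{\pm m})=0$ for every $m\neq0$, and induction on $m$ through the restriction sequence
$$
0\to\mathcal O_S((m-1)C)\to\mathcal O_S(mC)\to\mathcal O_C(mC)\to0
$$
yields $h^0(S,\mathcal O_S(mC))=1$ with the asserted unique section. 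I expect this to be the only step carrying genuine content; everything else is formal, and the ``main obstacle'' is nothing more than this normal-bundle identification.

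Feeding this back, a nonzero section of $\mathcal O_S(mC+rH)$ with $r<0$ would, after multiplication by a general section of the very ample linear system $|-rH|$, produce a section of $\mathcal O_S(mC)$ whose divisor is the fixed divisor $mC$ while also containing a moving, very ample member, which is impossible. Thus $H^0(S,\mathcal O_S(mC+rH))=0$ for $r<0$, and
$$
\bigoplus_{m\ge0}\mathcal O_X(mD)\simeq\bigoplus_{m\ge0}\bigoplus_{r\ge0}H^0(S,\mathcal O_S(mC+rH)).
$$

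Finally I would establish non-finite generation by the multiplication criterion. For every $m\ge1$ consider the bidegree-$(m,1)$ piece $H^0(S,\mathcal O_S(mC+H))$. Any product of two positive-degree elements landing there must pair a factor of $r=1$ with a factor of $r=0$, and each $r=0$ factor lies in a one-dimensional $H^0(S,\mathcal O_S(bC))$ with $b\ge1$, hence vanishes along $C$. Therefore the image of
$$
\bigoplus_{a=0}^{m-1}H^0(S,\mathcal O_S(aC+H))\otimes H^0(S,\mathcal O_S((m-a)C))\to H^0(S,\mathcal O_S(mC+H))
$$
is contained in $H^0(S,\mathcal O_S((m-1)C+H))$, a proper subspace because $\mathcal O_S(mC+H)$ is very ample (so globally generated, and its sections do not all vanish along $C$). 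Since $C$ is nef with $C^2=0$ and the very-ampleness statements of the set-up of Example~\ref{exe89} apply, this map is never surjective for any $m\ge1$, so $\bigoplus_{m\ge0}\mathcal O_X(mD)$ cannot be generated in bounded degrees. The same reasoning works for both $i=1$ and $i=2$, which proves Claim~\ref{exe-cl1}.
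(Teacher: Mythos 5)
Your proposal is correct and takes essentially the same route as the paper: the paper proves Claim \ref{exe-cl1} simply by invoking ``the same arguments as in Example \ref{exe88}'', which is exactly what you transcribe, and your normal-bundle identification $N_{C_i/S}\cong L^{\pm 1}$ (non-torsion, hence $h^0(S,\mathcal O_S(mC_i))=1$ for all $m\geq 0$) correctly supplies the one input that the paper only records without proof in the remark following the claim.
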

We note that $\mathcal O_S(mC_i)$ has only the obvious 
section which vanishes along $mC_i$ for every $m>0$. 

Let $B\subset X$ be the cone over $F$. 
Then we have the following result. 
\begin{cla}\label{exe-cl2} 
The graded $\mathcal O_X$-algebra $\bigoplus _{m\geq 0} 
\mathcal O_X(mB)$ is a finitely generated 
$\mathcal O_X$-algebra. 
\end{cla}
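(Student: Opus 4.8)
The plan is to reduce the statement to the finite generation of a bigraded section ring on $S$ and then to exploit the fact that, unlike the sections $C_i$, the fiber class $F$ is semi-ample. Since $X\subset \mathbb A^{N+1}$ is affine, set $A=\Gamma(X,\mathcal O_X)=\bigoplus_{r\ge 0}H^0(S,\mathcal O_S(rH))$, the coordinate ring of the cone. Then the sheaf of $\mathcal O_X$-algebras $\bigoplus_{m\ge 0}\mathcal O_X(mB)$ is finitely generated if and only if the graded ring $R:=\bigoplus_{m\ge 0}\Gamma(X,\mathcal O_X(mB))$ is a finitely generated $A$-algebra. By the same computation as in Example \ref{exe88}, using that $X$ is normal and $B$ is the cone over $F$, we would obtain
$$
R\simeq \bigoplus_{m\ge 0}\bigoplus_{r\in\mathbb Z}H^0(S,\mathcal O_S(mF+rH)).
$$

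First I would show that the summands with $r<0$ vanish, so that the inner sum runs over $r\ge 0$. If $r<0$ and $G$ were an effective divisor with $G\sim mF+rH$, then, since only finitely many fibers of $p\colon S\to E$ can be among the components of $G$, a general fiber $F_0=p^{-1}(e)$ satisfies $G\cdot F_0\ge 0$. On the other hand $F\cdot F_0=0$ and $H\cdot F_0=(aF+bC_1)\cdot F=b>0$, so $(mF+rH)\cdot F_0=rb<0$, a contradiction. Hence $H^0(S,\mathcal O_S(mF+rH))=0$ for $r<0$ and
$$
R\simeq \bigoplus_{m\ge 0}\bigoplus_{r\ge 0}H^0(S,\mathcal O_S(mF+rH)),
$$
the section ring of the two divisors $F$ and $H$ on $S$.

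The decisive point is that both $F$ and $H$ are semi-ample: $H$ is ample by construction, while $\mathcal O_S(F)\simeq p^*\mathcal O_E(e_0)$ for a point $e_0\in E$, the pullback of an ample divisor on the elliptic curve $E$, hence semi-ample. The finite generation of the section ring of finitely many semi-ample divisors is well known; it follows from the single-divisor case in \cite[Theorem 2.3.15]{lazarsfeld} by passing to the morphisms defined by (multiples of) $F$ and $H$ and to the image of $S$ in a product of projective spaces. Thus $R$ is a finitely generated $\mathbb C$-algebra, and since $A\subseteq R$ sits in degree $m=0$, the ring $R$ is a fortiori finitely generated over $A$, which gives the claim. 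The main obstacle is precisely this last input: finite generation hinges on the semi-ampleness of $F$, and this is exactly the feature that the sections $C_i$ lack, since $\mathcal O_{C_i}(C_i)$ is a non-torsion degree-zero class, so $C_i$ is nef but not semi-ample. This is why Claim \ref{exe-cl1} yields non-finitely generated algebras while the cone $B$ over the moving curve $F$ does not.
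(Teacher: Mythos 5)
Your proposal is correct, and its skeleton coincides with the paper's: both identify $\bigoplus_{m\geq 0}\mathcal O_X(mB)$ with the bigraded ring $\bigoplus_{m\geq 0}\bigoplus_{r\geq 0}H^0(S,\mathcal O_S(mF+rH))$ and both rest on the semi-ampleness of $F$ (as $p^*\mathcal O_E(e_0)$) together with the ampleness of $H$. The difference is in the finishing device. The paper packages the two divisors into the $\mathbb P^1$-bundle $V=\mathbb P_S(\mathcal O_S(F)\oplus \mathcal O_S(H))$ and observes that $\mathcal O_V(1)$ is semi-ample, so that
$$
\bigoplus _{n\geq 0}H^0(V,\mathcal O_V(n))\simeq \bigoplus _{m+r=n}H^0(S,\mathcal O_S(mF+rH))
$$
is finitely generated by the single-divisor case in one stroke; this is a self-contained proof of exactly the ``well-known'' multigraded fact you invoke. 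Your route via the morphism to a product of projective spaces is a standard alternative, but as sketched it hides some bookkeeping (passing to suitable Veronese subalgebras, Stein factorization, and finiteness of $\bigoplus_{a,b}H^0(T,\mathcal O_T(a,b)\otimes \psi_*\mathcal O_S)$ as a module over the multihomogeneous coordinate ring of the image $T$), so if you spell it out, the bundle trick is the shorter path. One genuine improvement on your side: you prove the vanishing $H^0(S,\mathcal O_S(mF+rH))=0$ for $r<0$ explicitly by intersecting with a general fiber $F_0$, whereas the paper subsumes it under ``the same arguments as in Example \ref{exe88}''; note that the rigidity argument used there for $mE_S$ (a unique section vanishing along $mE_S$) does not transfer to the moving class $mF$, so your intersection computation $(mF+rH)\cdot F_0=rb<0$ is the right replacement for this step. (Minor slip: in that computation you wrote $H\cdot F_0=(aF+bC_1)\cdot F$; the two curves lie in the same class, so the conclusion $=b>0$ is unaffected.)
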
 
\begin{proof}[Proof of Claim \ref{exe-cl2}]
By the same arguments as in Example \ref{exe88}, 
we have 
$$
\bigoplus _{m\geq 0} \mathcal O_X(mB)\simeq 
\bigoplus _{m\geq 0} 
\bigoplus _{r\geq 0} H^0(S, \mathcal O_S(mF+rH)). 
$$ 
We consider $V=\mathbb P_S(\mathcal O_S(F)\oplus \mathcal 
O_S(H))$. 
Then $\mathcal O_V(1)$ is semi-ample. 
Therefore, 
$$
\bigoplus _{n\geq 0} H^0(V, \mathcal O_V(n))\simeq 
\bigoplus _{m\geq 0} \bigoplus _{r\geq 0} 
H^0(S, \mathcal O_S(mF+rH))  
$$ 
is finitely generated. 
\end{proof}
Let $P \in X$ be the vertex of the cone $X$ and 
let $f:Y\to X$ be the blow-up at $P$. 
Let $A\simeq S$ be the exceptional divisor 
of $f$. 
We consider the $\mathbb P^1$-bundle $\pi:
\mathbb P_S(\mathcal O_S\oplus \mathcal O_S(H))\to 
S$. Then 
$$Y\simeq \mathbb P_S(\mathcal O_S\oplus \mathcal O_S(H))\setminus G,$$  
where $G$ is the section of 
$\pi$ corresponding to 
$$\mathcal O_S\oplus \mathcal O_S(H)
\to \mathcal O_S(H)\to 0.$$  
We consider $\pi^*F$ on $Y$. 
Then $\mathcal O_Y(\pi^*F)$ is obviously 
$f$-semi-ample. 
So, we obtain a contraction morphism 
$g:Y\to Z$ over $X$. 
We can check that 
$$Z\simeq \Proj _X\bigoplus _{m \geq 0} \mathcal O_X(mB)$$ 
over $X$ and that $h:Z\to X$ is a small projective contraction. 
On $Y$, we have 
$$-A\sim \pi^*H=a\pi^*F+b\pi^*C_1. $$  
Therefore, we obtain $aB+bD_1\sim 0$ on $X$. 
Let $B'$ be the strict transform of $B$ on $Z$ and let $D'_i$ be the strict transform 
of $D_i$ on $Z$ for $i=1$ and $2$. 
Note that $B'$ is $h$-ample, $aB'+bD'_1\sim 0$, and 
$K_Z+D'_1+D'_2=h^*(K_X+D_1+D_2)\sim 0$. 
If $L$ is not a torsion element, then 
the flop of $h:Z\to X$ with respect to 
$D'_1$ for $(Z, D'_1+D'_2)$ does not 
exist since $\bigoplus _{m\geq 0} \mathcal O_X(mD_1)$ is 
not finitely generated as an $\mathcal O_X$-algebra. 

Let $C$ be any Cartier divisor on $Z$ such that 
$-C$ is $h$-ample. 
Then the flop of $h:Z\to X$ with respect to 
$C$ exists if and only if 
$$\bigoplus _{m\geq 0} h_*\mathcal O_Z(mC)$$ is a finitely 
generated $\mathcal O_X$-algebra. 
We can take positive integers $m_0$ and $m_1$ such that 
$m_1C$ is numerically equivalent to 
$m_0D'_1$ over $X$. 
Note that $\Exc(h)\simeq E$. 
Therefore, we can find a degree zero Cartier divisor 
$N$ on $E$ such that 
$$
m_1C-m_0D'_1\sim _{h}g_*(\pi|_Y)^*(p^*N). 
$$ 
Thus, 
$$
\bigoplus _{m\geq 0}h_*\mathcal O_Z(mm_1C)
$$ 
is a finitely generated $\mathcal O_X$-algebra if and only 
if 
$$
R=\bigoplus _{m\geq 0}h_*\mathcal O_Z(m(m_0D'_1+g_*(\pi|_Y)^*
(p^*N)))
$$ 
is so. 
Since $h$ is small, 
$R$ is isomorphic to 
$$
\bigoplus _{m\geq 0}\mathcal O_X(m(m_0D_1+\widetilde N)), 
$$ 
where $\widetilde N\subset X$ is the cone over $p^*N$. 
Anyway, 
$$
\bigoplus _{m\geq 0}h_*\mathcal O_Z(mC)
$$ 
is a finitely generated $\mathcal O_X$-algebra 
if and only if 
$$
\bigoplus _{m\geq 0}\mathcal O_X(m(m_0D_1+\widetilde N))
$$ 
is so, where $\widetilde N$ is the cone over $p^*N$. 
We note the following commutative diagram: 
$$
\xymatrix{
&Z\ar[dl]_{h}&\\
X\ar@{-->}[rrd]&&Y\ar[ll]^{f}\ar[d]^{\pi|_Y}\ar[ul]_{g}\\
&&S\ar[d]^{p}\\
&&E
}
$$
where $X\dashrightarrow S$ is the natural projection from the vertex $P$ of 
$X$. 

\begin{cla}\label{exe-cl3}
If $L$ is not a torsion element in $\Pic ^0(E)$, 
then $$\bigoplus_{m\geq 0} 
\mathcal O_X(m(m_0D_1+\widetilde N))$$ 
is not finitely generated as an $\mathcal O_X$-algebra. 
In particular, the flop of $h:Z\to X$ with 
respect to $C$ does not exist. 
\end{cla}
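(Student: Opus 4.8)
The plan is to reduce the claim, exactly as in Example~\ref{exe88} and in the proof of Claim~\ref{exe-cl1}, to a section-ring computation on the surface $S$. Write $G=m_0C_1+p^*N$ for the divisor on $S$ whose cone over $\mathbb A^{N+1}$ is $m_0D_1+\widetilde N$. Since $X$ is the normal affine cone over $S$ with vertex $P$, restriction to $X\setminus P$ together with the grading by $H$ gives, as in Example~\ref{exe88}, an isomorphism
$$
H^0\bigl(X,\mathcal O_X(m(m_0D_1+\widetilde N))\bigr)\;\simeq\;\bigoplus_{r\in\mathbb Z}H^0\bigl(S,\mathcal O_S(mG+rH)\bigr),
$$
under which $\bigoplus_{m\geq 0}\mathcal O_X(m(m_0D_1+\widetilde N))$ is identified with the bigraded ring $\bigoplus_{m,r}H^0(S,\mathcal O_S(mG+rH))$. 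The goal is to show that this ring is not finitely generated.

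First I would extract the non-torsion input. The normal bundle of the section $C_1$ of $p\colon S\to E$ is $\mathcal O_{C_1}(C_1)\cong L^{-1}$, so, via the isomorphism $p|_{C_1}\colon C_1\xrightarrow{\sim}E$, one has $\mathcal O_{C_1}(G)\cong L^{-m_0}\otimes\mathcal O_E(N)$, a line bundle of degree $0$. When $L$ is non-torsion this class is non-torsion in $\Pic^0(E)$, so that
$$
H^0\bigl(C_1,\mathcal O_{C_1}(kG)\bigr)\cong H^0\bigl(E,(L^{-m_0}\otimes\mathcal O_E(N))^{\otimes k}\bigr)=0\qquad(k\geq 1).
$$
From the exact sequence $0\to\mathcal O_S(kG-C_1)\to\mathcal O_S(kG)\to\mathcal O_{C_1}(kG)\to 0$ this shows that every section of $\mathcal O_S(kG)$ vanishes along $C_1$ for $k\geq 1$. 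The same non-torsion input yields, as in Example~\ref{exe88}, the vanishing $H^0(S,\mathcal O_S(mG+rH))=0$ for every $r<0$, so that the sum in the displayed isomorphism runs over $r\geq 0$ only; this is what guarantees that the decomposable elements of bidegree $(m,1)$ are exhausted by products of an $r=1$ factor with an $r=0$ factor.

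With these facts the argument concludes as at the end of Example~\ref{exe88}. After replacing $H$ by a suitable multiple so that $\mathcal O_S(mG+rH)$ is very ample for $m\geq 0$ and $r>0$, the Kawamata--Viehweg vanishing $H^1(S,\mathcal O_S(mG+H-C_1))=0$---valid because $mG+H-C_1-K_S\sim mG+H+C_2$ is nef and big ($H$ is ample and $C_1,C_2$ are nef)---makes the restriction $H^0(S,\mathcal O_S(mG+H))\to H^0(C_1,\mathcal O_{C_1}(mG+H))$ surjective onto a nonzero space, since $\deg\mathcal O_{C_1}(mG+H)=a>0$. On the other hand, the image of the multiplication map
$$
\bigoplus_{a=0}^{m-1}H^0(S,\mathcal O_S(aG+H))\otimes H^0(S,\mathcal O_S((m-a)G))\longrightarrow H^0(S,\mathcal O_S(mG+H))
$$
consists only of sections vanishing along $C_1$, because each second factor $H^0(S,\mathcal O_S((m-a)G))$ with $m-a\geq 1$ vanishes there. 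Hence these maps are never surjective, the bigraded ring is not finitely generated, and therefore neither is $\bigoplus_{m\geq 0}\mathcal O_X(m(m_0D_1+\widetilde N))$. The final assertion is then immediate from the criterion recalled just before Claim~\ref{exe-cl3}: the flop of $h\colon Z\to X$ with respect to $C$ exists if and only if $\bigoplus_{m\geq 0}h_*\mathcal O_Z(mC)\cong\bigoplus_{m\geq 0}\mathcal O_X(m(m_0D_1+\widetilde N))$ is a finitely generated $\mathcal O_X$-algebra.

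The main obstacle is the non-torsion input of the second step: what the argument really needs is that $L^{-m_0}\otimes\mathcal O_E(N)$---and not merely $L$---be non-torsion, together with the $r<0$ vanishing, both of which must survive the twist by $p^*N$. I would therefore check, using the description of $N$ coming from the relation $m_1C-m_0D_1'\sim_h g_*(\pi|_Y)^*(p^*N)$ and the identification $\mathcal O_{C_1}(C_1)\cong L^{-1}$, that this twist cannot make the restricted class torsion when $L$ is non-torsion; once that is in place, everything else is a routine transcription of Example~\ref{exe88}.
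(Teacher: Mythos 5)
Your overall route coincides with the paper's: reduce to the bigraded section ring $\bigoplus_{m,r}H^0(S,\mathcal O_S(mG+rH))$ with $G=m_0C_1+p^*N$, analyze sections via restriction to $C_1$, and conclude with the multiplication-map trick of Example~\ref{exe88} together with the flop criterion. The difference lies in the key intermediate lemma, and there the step you yourself flag as ``the main obstacle'' is a genuine gap that cannot be closed in the direction you propose. The paper's lemma is the bound $\dim H^0(S,\mathcal O_S(mG))\leq 1$ for all $m\geq 0$, obtained from the filtration by $\mathcal O_S(lC_1+mp^*N)$, $0\leq l\leq mm_0$: since $\mathcal O_{C_1}(C_1)\cong L^{\mp 1}$ is non-torsion, at most one level $l$ (including the bottom one, which contributes $h^0(E,mN)$) can have trivial degree-zero restriction, so the total dimension is at most $1$. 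This uses only the hypothesis that $L$ is non-torsion and imposes no condition on $N$. Your substitute lemma --- that $\mathcal O_{C_1}(G)\cong L^{-m_0}\otimes\mathcal O_E(N)$ is non-torsion --- is strictly stronger and is \emph{false} in general: $N$ is an essentially arbitrary degree-zero class. Indeed, since $g$ contracts the ruling of $A\simeq S$ and $p^*N'$ has degree zero on those fibers, $g_*(\pi|_Y)^*(p^*N')$ is a Cartier, $h$-numerically trivial divisor on $Z$ for \emph{every} degree-zero $N'$ on $E$; choosing $C=m_0D'_1+g_*(\pi|_Y)^*(p^*(m_0L))$ (with $m_0$ such that $m_0D'_1$ is Cartier, so that $-C$ is $h$-ample) yields $N\sim m_0L$ and hence $\mathcal O_{C_1}(G)$ trivial even though $L$ is non-torsion. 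So the verification you defer to at the end is a dead end, and with it your argument breaks: in this case the unique member of $|mG|$ is $mm_0C_2$, which misses $C_1$ entirely, so products no longer land in the sections vanishing along $C_1$.

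The repair is a dichotomy rather than a verification. Since
$$
\mathcal O_{C_1}(G)\otimes\bigl(\mathcal O_{C_2}(G)\bigr)^{-1}\cong L^{\mp m_0}
$$
is non-torsion (note $\mathcal O_{C_2}(G)\cong\mathcal O_E(N)$ because $C_1\cap C_2=\emptyset$), the two restrictions $G|_{C_1}$ and $G|_{C_2}$ cannot both be torsion. Fix $i\in\{1,2\}$ with $G|_{C_i}$ non-torsion; then $mG|_{C_i}$ is a nontrivial degree-zero bundle for every $m\geq 1$, so every nonzero section of $\mathcal O_S(mG)$ vanishes along the \emph{fixed} curve $C_i$, and your Example~\ref{exe88}-style argument runs verbatim with $C_i$ in place of $C_1$ (the existence of sections of $mG+H$ not vanishing along $C_i$ follows as in your Kawamata--Viehweg step, or simply from the very ampleness of $mG+rH$ for $r>0$ arranged in the setup, as the paper notes using that $G$ is nef). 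Alternatively, prove the paper's bound $h^0(S,\mathcal O_S(mG))\leq 1$ as above; either way, the correct argument must consume only the non-torsionness of $L$, not of its twist by $N$. The remaining ingredients of your write-up (the cone isomorphism, the $r<0$ vanishing --- which in fact holds for purely numerical reasons, since $(mG+rH)\cdot C_1=ra<0$ for $r<0$ --- and the final appeal to the flop criterion) are sound.
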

\begin{proof}[Proof of Claim \ref{exe-cl3}] 
By the same arguments as in Example \ref{exe88}, 
we have 
\begin{align*}
&\bigoplus_{m\geq 0} \mathcal O_X(m(m_0D_1+\widetilde N))
\\
&\simeq  
\bigoplus _{m\geq 0}\bigoplus _{r\in \mathbb Z}
H^0(S, \mathcal O_S(m(m_0C_1+p^*N)+rH)). 
\end{align*} 
By considering 
\begin{align*}
0&\to H^0(S, \mathcal O_S((l-1)C_1+mp^*N))\to 
H^0(S, \mathcal O_S(lC_1+mp^*N))\\ 
&\to H^0(C_1, \mathcal O_{C_1}(lC_1+mp^*N))\to \cdots
\end{align*} 
for $1\leq l\leq mm_0$, we obtain that 
$$\dim H^0(S, \mathcal O_S(m(m_0C_1+p^*N)))\leq 1$$ for 
every $m\geq 0$. Therefore, we can check that the above 
$\mathcal O_X$-algebra is not finitely generated 
by the same arguments as in Example \ref{exe88}. 
We note that $\mathcal O_S(m(m_0C_1+p^*N)+rH)$ is very ample 
for every $m\geq 0$ and every $r>0$ because 
$m_0C_1+p^*N$ is nef. 
\end{proof}
Anyway, if $L$ is not a torsion element in $\Pic ^0(E)$, 
then the flop of $h:Z\to X$ does not exist with respect to any divisor. 

From now on, in the above setting, we assume that $L$ is a torsion 
element in $\Pic ^0(E)$. 
Then $\mathcal O_Y(\pi^*C_1)$ is $f$-semi-ample. So, 
we obtain a contraction morphism 
$g':Y\to Z^+$ over $X$. 
It is easy to see that $$\bigoplus _{m\geq 0}
\mathcal O_X(mD_i)$$ is 
finitely generated as an $\mathcal O_X$-algebra 
for $i=1, 2$ (cf.~Claim \ref{exe-cl2}), 
$$Z^+\simeq \Proj _X\bigoplus _{m\geq 0}\mathcal O_X(mD_1), 
$$  
over $X$ and that $Z^+\to X$ is the flop of $Z\to X$ with 
respect to $D'_1$. 

Let $C$ be any Cartier divisor on $Z$ such that 
$-C$ is $h$-ample. 
If $-C\sim _{\mathbb Q, h} cB'$ 
for some positive rational number 
$c$, then it is obvious that the above $Z^+\to X$ is the flop of 
$h:Z\to X$ with respect to $C$. 
If $-C\not\sim _{\mathbb Q, h}cB'$ for any 
positive rational number $c$, 
then the flop of $h:Z\to X$ with respect to $C$ does not 
exist. As above, 
we take positive integers $m_0$ and $m_1$ such that 
$m_1C$ is numerically equivalent to 
$m_0D'_1$ over $X$. 
Then we can find 
a degree zero 
Cartier divisor $N$ on $E$ such that 
$$
m_1C-m_0D'_1\sim _{h}g_*(\pi|_Y)^*(p^*N). 
$$ 
Since $-C\not\sim _{\mathbb Q, h}cB'$ for any positive rational 
number $c$, 
$N$ is a non-torsion element in $\Pic^0(E)$. 
Thus, 
$$\bigoplus _{m\geq 0} h_*\mathcal O_Z(mC)$$ is 
finitely generated if and only if 
$$\bigoplus _{m\geq 0} \mathcal O_X(m(m_0D_1+\widetilde 
N))$$ is so, where $\widetilde N\subset X$ is the cone 
over $p^*N\subset S$. By the 
same arguments as in the proof of Claim \ref{exe-cl3}, 
we can check that $$\bigoplus _{m\geq 0}
\mathcal O_X(m(m_0D_1+\widetilde N))$$ is not finitely 
generated as an $\mathcal O_X$-algebra. 
We note that 
$$\dim H^0(S, \mathcal O_S(m(m_0C_1+p^*N)))=0$$ for every $m>0$ 
since $N$ is a non-torsion element in $\Pic ^0(E)$ and 
$L$ is a torsion element in $\Pic ^0(E)$ 
(see the proof of Claim \ref{exe-cl3}).   
\end{ex}



\begin{thebibliography}{BCHM}

\bibitem[AK]{abramovich-karu}
D.~Abramovich, K.~Karu, 
Weak semistable reduction in characteristic 0, 
Invent. Math. {\textbf{139}} (2000), no. 2, 241--273.

\bibitem[AB]{alexeev-borisov} 
V.~Alexeev, A.~Borisov, On the log discrepancies in toric Mori 
contractions, to appear in Proc. Amer. Math. Soc. 

\bibitem[A1]{ambro1}
F.~Ambro, Quasi-log varieties, 
Tr. Mat. Inst. Steklova {\textbf{240}} (2003), 
Biratsion. Geom. Linein. Sist. Konechno Porozhdennye Algebry, 220--239; 
translation in Proc. Steklov Inst. Math. 2003, no. 1 (240), 214--233

\bibitem[A2]{ambro2} 
F.~Ambro, Shokurov's boundary property, 
J. Differential Geom. {\textbf{67}} (2004), no. 2, 229--255.

\bibitem[A3]{ambro3} 
F.~Ambro, The moduli b-divisor of an lc-trivial fibration, 
Compos. Math. {\textbf{141}} (2005), no. 2, 385--403.

\bibitem[A]{atiyah} 
M.~F.~Atiyah, 
Some examples of complex manifolds, Bonn. Math. Schr. no. 6 (1958), 28 pp. 

\bibitem[B1]{birkar} 
C.~Birkar,  Existence of log canonical flips and a special LMMP, 
Publ. Math. Inst. Hautes \'Etudes Sci. {\textbf{115}} (2012), 325--368.

\bibitem[B2]{birkar-div} 
C.~Birkar, 
Log canonical algebras and modules, 
J. Math. Soc. Japan \textbf{65} (2013), no. 4, 1319--1328. 

\bibitem[B3]{birkar2} 
C.~Birkar, Singularities on the base of a Fano type fibration, 
to appear in J. Reine Angew. Math.

\bibitem[BCHM]{bchm} 
C.~Birkar, P.~Cascini, C.~D.~Hacon, J.~M\textsuperscript{c}Kernan, 
Existence of minimal models for varieties of log general type, 
J. Amer. Math. Soc. {\textbf{23}} (2010), no. 2, 405--468.

\bibitem[CHP]{chp} 
F.~Campana, A.~H\"oring, T.~Peternell, 
Abundance for K\"ahler threefolds, preprint (2014). 

\bibitem[F1]{fujino1} 
O.~Fujino, Applications of Kawamata's positivity theorem, 
Proc. Japan Acad. Ser. A Math. Sci. {\textbf{75}} (1999), no. 6, 75--79. 

\bibitem[F2]{fujino-nagoya} 
O.~Fujino, 
A canonical bundle formula for certain algebraic fiber spaces and 
its applications, 
Nagoya Math. J. {\textbf{172}} (2003), 129--171.

\bibitem[F3]{fujino-un} 
O.~Fujino, Higher direct images of log canonical divisors and positivity 
theorems, unpublished preprint (2003). 
arXiv:math/0302073

\bibitem[F4]{fujino2} 
O.~Fujino, Higher direct images of log canonical divisors, 
J. Differential Geom. {\textbf{66}} (2004), no. 3, 453--479.

\bibitem[F5]{fujino-special} 
O.~Fujino, 
Special termination and reduction to pl flips, 
{\em{Flips for $3$-folds and $4$-folds}}, 
63--75, Oxford Lecture Ser. Math. Appl., {\textbf{35}}, 
Oxford Univ. Press, Oxford, 2007. 

\bibitem[F6]{fujino3} 
O.~Fujino, Introduction to the log minimal model program for log canonical pairs, 
preprint (2008). 

\bibitem[F7]{fujino-finite} 
O.~Fujino, Finite generation of the log canonical ring in dimension four, 
Kyoto J. Math. {\textbf{50}} (2010), no. 4, 671--684.

\bibitem[F8]{fujino4} 
O.~Fujino, Introduction to the theory of quasi-log varieties, 
{\em{Classification of algebraic varieties}}, 289--303, EMS Ser. Congr. Rep., 
Eur. Math. Soc., Z\"urich, 2011. 

\bibitem[F9]{fujino5} 
O.~Fujino, Non-vanishing theorem for log canonical pairs, 
J. Algebraic Geom. {\textbf{20}} (2011), no. 4, 771--783.

\bibitem[F10]{fujino6} 
O.~Fujino, Fundamental theorems for the log minimal model program, 
Publ. Res. Inst. Math. Sci. {\textbf{47}} (2011), no. 3, 727--789.

\bibitem[F11]{fujino6.5}
O.~Fujino, 
Basepoint-free theorems:~saturation, b-divisors, 
and canonical bundle formula, 
Algebra Number Theory {\textbf{6}} (2012), no. 4, 797--823.

\bibitem[F12]{fujino7} 
O.~Fujino, Notes on the weak positivity theorems, preprint (2013). 

\bibitem[F13]{fujino-foundation} 
O.~Fujino, Foundation of the minimal model program, preprint (2014). 

\bibitem[FF]{fujino-fujisawa} 
O.~Fujino, T.~Fujisawa, 
Variations of mixed Hodge structure and semipositivity theorems, 
to appear in Publ. Res. Inst. Math. Sci. 

\bibitem[FFS]{ffs} 
O.~Fujino, T.~Fujisawa, M.~Saito, 
Some remarks on the semipositivity theorems, 
Publ. Res. Inst. Math. Sci. 50 (2014), no. 1, 85--112. 

\bibitem[FG1]{fujino-gongyo1} 
O.~Fujino, Y.~Gongyo, 
On canonical bundle formulas and subadjunctions, 
Michigan Math. J. {\textbf{61}} (2012), no. 2, 255--264. 

\bibitem[FG2]{fujino-gongyo2} 
O.~Fujino, Y.~Gongyo, 
Log pluricanonical representations 
and the abundance conjecture, 
Compos. Math. {\textbf{150}} (2014), no. 4, 593--620.

\bibitem[FG3]{fujino-gongyo3} 
O.~Fuijno, Y.~Gongyo, 
On the moduli b-divisors of lc-trivial fibrations, 
to appear in Ann. Inst. Fourier (Grenoble). 

\bibitem[FG4]{fujino-gongyo4} 
O.~Fujino, Y.~Gongyo, 
On log canonical rings, 
to appear in Adv. Stud. Pure Math. 

\bibitem[FM]{fujino-mori} 
O.~Fujino, S.~Mori, 
A canonical bundle formula, 
J. Differential Geom. {\textbf{56}} (2000), no. 1, 167--188. 

\bibitem[Ft]{fujita} 
T.~Fujita, On K\"ahler fiber spaces over curves, 
J. Math. Soc. Japan {\textbf{30}} (1978), no. 4, 779--794. 

\bibitem[HX]{hacon-xu} 
C.~D.~Hacon, C.~Xu, 
Existence of log canonical closures, 
Invent. Math. {\textbf{192}} (2013), no. 1, 161--195. 

\bibitem[HP1]{horing-peternell} 
A.~H\"oring, T.~Peternell, Minimal models for K\"ahler threefolds, 
preprint (2013). 

\bibitem[HP2]{horing-peternell2} 
A.~H\"oring, T.~Peternell, 
Mori fiber spaces for K\"ahler threefolds, preprint (2013). 

\bibitem[Ka1]{kawamata1} 
Y.~Kawamata, 
Pluricanonical systems on minimal algebraic varieties, 
Invent. Math. {\textbf{79}} (1985), no. 3, 567--588. 

\bibitem[Ka2]{kawamata-sub} 
Y.~Kawamata, Subadjunction of log canonical divisors for a subvariety of 
codimension $2$, 
{\em{Birational algebraic geometry (Baltimore, MD, 1996)}}, 79--88, 
Contemp. Math., {\textbf{207}}, Amer. Math. Soc., Providence, RI, 1997. 

\bibitem[Ka3]{kawamata2} 
Y.~Kawamata, Subadjunction of log canonical divisors. II, 
Amer. J. Math. {\textbf{120}} (1998), no. 5, 893--899.

\bibitem[Ko1]{kollar1} 
J.~Koll\'ar, Higher direct images of dualizing sheaves. I, 
Ann. of Math. (2) {\textbf{123}} (1986), no. 1, 11--42.

\bibitem[Ko2]{kollar2} 
J.~Koll\'ar, Kodaira's canonical bundle formula and adjunction, 
{\em{Flips for $3$-folds and $4$-folds}}, 134--162, 
Oxford Lecture Ser. Math. Appl., {\textbf{35}}, Oxford Univ. Press, Oxford, 2007. 

\bibitem[Ko3]{kollar3} 
J.~Koll\'ar, 
Exercises in the birational geometry of algebraic varieties, 
{\em{Analytic and algebraic geometry}}, 495--524, 
IAS/Park City Math. Ser., {\textbf{17}}, Amer. Math. Soc., Providence, RI, 2010. 

\bibitem[KM]{kollar-mori} 
J.~Koll\'ar, S.~Mori, {\em{Birational geometry of algebraic varieties}}. With 
the collaboration of C.~H.~Clemens and A.~Corti. Translated from 
the 1998 Japanese original. Cambridge Tracts in 
Mathematics, {\textbf{134}}. Cambridge University Press, Cambridge, 1998. 

\bibitem[L]{lazarsfeld} 
R.~Lazarsfeld, {\em{Positivity in algebraic geometry.~I. Classical 
setting:~line bundles and linear series}}, 
Results in Mathematics and Related Areas. 3rd Series. A Series 
of Modern Surveys in Mathematics], {\textbf{48}}. Springer-Verlag, Berlin, 2004.

\bibitem[Mg]{magnusson} 
G.~Magn\'usson, Automorphisms and examples of compact 
non-K\"ahler manifolds, preprint (2012). 

\bibitem[Mr]{mori} 
S.~Mori, Classification of higher-dimensional varieties, 
{\em{Algebraic geometry, Bowdoin, 1985 (Brunswick, Maine, 1985)}}, 269--331, 
Proc. Sympos. Pure Math., {\textbf{46}}, Part 1, Amer. Math. Soc., Providence, RI, 1987. 

\bibitem[Mw]{moriwaki} 
A.~Moriwaki, 
Semi-ampleness of the numerically effective part of Zariski decomposition.~II, 
{\em{Algebraic geometry and commutative algebra}}, Vol. I, 289--311, 
Kinokuniya, Tokyo, 1988. 

\bibitem[N]{nakayama} 
N.~Nakayama, 
The singularity of the canonical model of compact K\"ahler manifolds, 
Math. Ann. {\textbf{280}} (1988), no. 3, 509--512.

\bibitem[PS]{prokhorov-shokurov} 
Yu.~G.~Prokhorov, V.~V.~Shokurov, 
Towards the second main theorem on complements, 
J. Algebraic Geom. {\textbf{18}} (2009), no. 1, 151--199.

\bibitem[R]{reid}
M.~Reid, Projective morphisms according to Kawamata, Warwick preprint 
1983. 

\bibitem[U]{ueno} 
K.~Ueno, 
{\em{Classification theory of algebraic varieties and compact complex spaces}}, 
Notes written in collaboration with P.~Cherenack. Lecture Notes in 
Mathematics, Vol. {\textbf{439}}. Springer-Verlag, Berlin-New 
York, 1975. 

\bibitem[W]{wilson} 
P.~M.~H.~Wilson, 
On the canonical ring of algebraic varieties, 
Compositio Math. {\textbf{43}} (1981), no. 3, 365--385.
\end{thebibliography}
\end{document}